\newtheorem{assumption}{Assumption}%
\newtheorem{corollary}{Corollary}%
\def\cl@chapter{\@elt {chapter}}
\crefname{assumption}{Assumption}{Assumptions}
\newcommand{\N}{\mathbb{N}}
\newcommand{\R}{\mathbb{R}}
\newcommand{\C}{\mathbb{C}}
\newcommand{\F}{\mathcal{F}}
\newcommand{\Hc}{\mathcal{H}}
\newcommand{\sgn}{\mathrm{sgn}}
\newcommand\restr[2]{\ensuremath{\left.#1\right\vert_{#2}}}
\newcommand\abs[1]{\ensuremath{\left.\vert#1\right\vert}}
\newcommand\tensor{\,\tilde{\otimes}\,}
\theoremstyle{thmstyletwo}%
\newtheorem{theorem}{Theorem}%  meant for continuous numbers
\newtheorem{example}{Example}%
\newtheorem{remark}{Remark}%
\newtheorem{lemma}{Lemma}%
\numberwithin{equation}{section}%
\newtheorem{method}{MDN method}%
\begin{document}

\DOI{}
\copyrightyear{}
\vol{}
%\pubyear{2021}
\pubyear{}
%\access{Advance Access Publication Date: Day Month Year}
\access{ }
\appnotes{Paper}
%\copyrightstatement{Published by Oxford University Press on behalf of the Institute of Mathematics and its Applications. All rights reserved.}
\copyrightstatement{}
\firstpage{1}

\title[Domain decomposition for quasilinear parabolic equations]{Linearly convergent nonoverlapping domain decomposition methods for quasilinear parabolic equations}
\author{Emil Engstr\"{o}m* and Eskil Hansen
\address{\orgdiv{Centre for Mathematical Sciences}, \orgname{Lund University}, \orgaddress{\street{P.O.\ Box 118}, \postcode{221 00}, \state{Lund}, \country{Sweden}}}}

\authormark{Emil Engstr\"{o}m and Eskil Hansen}

\corresp[*]{Corresponding author: \href{email:}{emil.engstrom@math.lth.se}}

\received{29}{8}{2023}

\abstract{
We prove linear convergence for a new family of modified Dirichlet--Neumann methods applied to quasilinear parabolic equations, as well as the convergence of the Robin--Robin method. Such nonoverlapping domain decomposition methods are commonly employed for the parallelization of partial differential equation solvers. Convergence has been extensively studied for elliptic equations, but in the case of parabolic equations there are hardly any convergence results that are not relying on strong regularity assumptions. Hence, we construct a new framework for analyzing domain decomposition methods applied to quasilinear parabolic problems, based on fractional time derivatives and time-dependent Steklov--Poincar\'e operators. The convergence analysis is conducted without assuming restrictive regularity assumptions on the solutions or the numerical iterates. We also prove that these continuous convergence results extend to the discrete case obtained when combining domain decompositions with space-time finite elements.
}
\keywords{nonoverlapping domain decompositions; quasilinear parabolic equations; linear convergence; time-dependent Steklov--Poincar\'e operators; space-time finite elements.}

\maketitle

\section{Introduction}\label{sec:intro}
Domain decomposition methods enable the usage of parallel and distributed hardware and are commonly employed when approximating the solutions to elliptic equations. The basic idea is to first decompose the equation's domain into subdomains. The numerical method then consists of iteratively solving the elliptic equation on each subdomain and thereafter communicating the results via the boundaries to the neighboring subdomains. An in-depth survey of the topic can be found in the monographs~\cite{quarteroni,widlund}. 

A recent development in the field is to apply this approach to parabolic equations. The decomposition into spatial subdomains is then replaced by a decomposition into space-time cylinders. In general, space-time decomposition schemes enable additional parallelization and less storage requirements when combined with a standard numerical method for parabolic problems. The methods have especially gained attention in the contexts of parallel time integrators; surveyed in~\cite{gander15}, space-time finite elements; surveyed in~\cite{steinbach19}, and parabolic problems with a spatial domain given by a union of domains with very different material properties~\cite{japhet20,japhet16}.

There have been several studies concerning the convergence and other theoretical aspects of space-time decomposition methods applied to linear parabolic equations, especially for Schwarz waveform-relaxation (SWR) type metods. Results for one-dimensional or rectangular spatial domains have, e.g., been derived in the papers~\cite{gander07,kwok16,kwok21,keller02,lemarie13a,lemarie13b}. For more general domains, convergence has been proven for SWR methods applied linear parabolic equations in~\cite{Halpern12,japhet13}, semilinear parabolic equations in~\cite{Halpern10}, and quasilinear parabolic equations in~\cite{gander23}. However, all these convergence results rely on additional regularity assumptions on the solution of the parabolic problem, or even the SWR approximation itself, which are not necessarily fulfilled for spatial (sub)domains that are only Lipschitz.

In the setting of domain decomposition methods applied to elliptic equations it is also standard that the convergence results for the continuous case directly extend to the discrete case obtained when combining the domain decomposition method with a space discretization, e.g., finite elements. This does not seem to hold true for the parabolic frameworks with more general spatial domains. The only related results stated in the above references  is the convergence of SWR decompositions combined with a semidiscretization in time via a discontinuous Galerkin scheme~\cite{Halpern12,japhet13}. 

Hence, the main goal of this study is to derive a new framework for nonoverlapping space-time domain decompositions for parabolic equations that enables the derivation of methods with the features below.
\begin{itemize}
\item The method is linearly convergent in the continuous case when applied to a family of quasilinear parabolic equations. 
\item The convergence analysis does not rely on additional regularity assumptions on the subdomains, the solution of parabolic problem, or the approximation itself.  
\item The continuous convergence result directly extends to the fully discrete setting obtained when combining the domain decomposition method with space-time finite elements.
\end{itemize}
We will furthermore strive to create a general enough framework such that the convergence of the Robin--Robin method also follows for quasilinear parabolic equations under mild regularity assumptions. That is, the same SWR method with zeroth-order transmission conditions employed in the quasilinear study~\cite{gander23}.

As a start, we introduce the notation
\begin{equation*}
Au=\partial_tu-\nabla\cdot\alpha(x, u, \nabla u)+\beta(x, u, \nabla u)
\end{equation*}
and consider the quasilinear parabolic equation
\begin{equation}\label{eq:strong+}
\left\{
     \begin{aligned}
            Au&=g  & &\text{in }\Omega\times(0,\infty),\\
            u&=\rho & &\text{on }\partial\Omega\times(0,\infty),\\
            u&=0 & &\text{in }\Omega\times\{0\},
        \end{aligned}
\right.
\end{equation}
where the spatial domain $\Omega\subset\R^d$, $d=2,3,\ldots$, is bounded with boundary $\partial\Omega$ and the functions $\alpha$ and $\beta$ are Lipschitz continuous and satisfy a uniform monotonicity property; see~\cref{sec:prob} for the precise assumptions. Note that the results of this paper also hold for $d=1$, but this case requires a slightly different setup. 

Next, we decompose the spatial domain $\Omega$ into nonoverlapping subdomains $\Omega_i$, $i=1,2$, with boundaries $\partial\Omega_i$, and denote the interface separating the subdomains $\Omega_i$ by $\Gamma$. That is, 
\begin{equation}\label{eq:domain}
\overline{\Omega}=\overline{\Omega}_1\cup\overline{\Omega}_2,\quad \Omega_1\cap\Omega_2=\emptyset,\quad\text{and}\quad\Gamma=(\partial\Omega_1\cap\partial\Omega_2)\setminus\partial\Omega. 
\end{equation}
The space-time cylinder $\Omega\times(0,\infty)$ is thereby decomposed into $\Omega_i\times(0,\infty)$, $i=1,2$, as illustrated in~\cref{fig:spacetimecylinder}. The current setting is also valid for spatial subdomains $\Omega_i$ given as unions of nonadjacent subdomains, i.e., 
\begin{displaymath}
\Omega_i=\cup_{\ell=1}^s\Omega_{i\ell}\quad\text{and}\quad
\overline{\Omega}_{i\ell}\cap\overline{\Omega}_{ij}=\emptyset\quad\text{for }\ell\neq j. 
\end{displaymath}
\begin{figure}
\centering
\includegraphics[width=.45\linewidth]{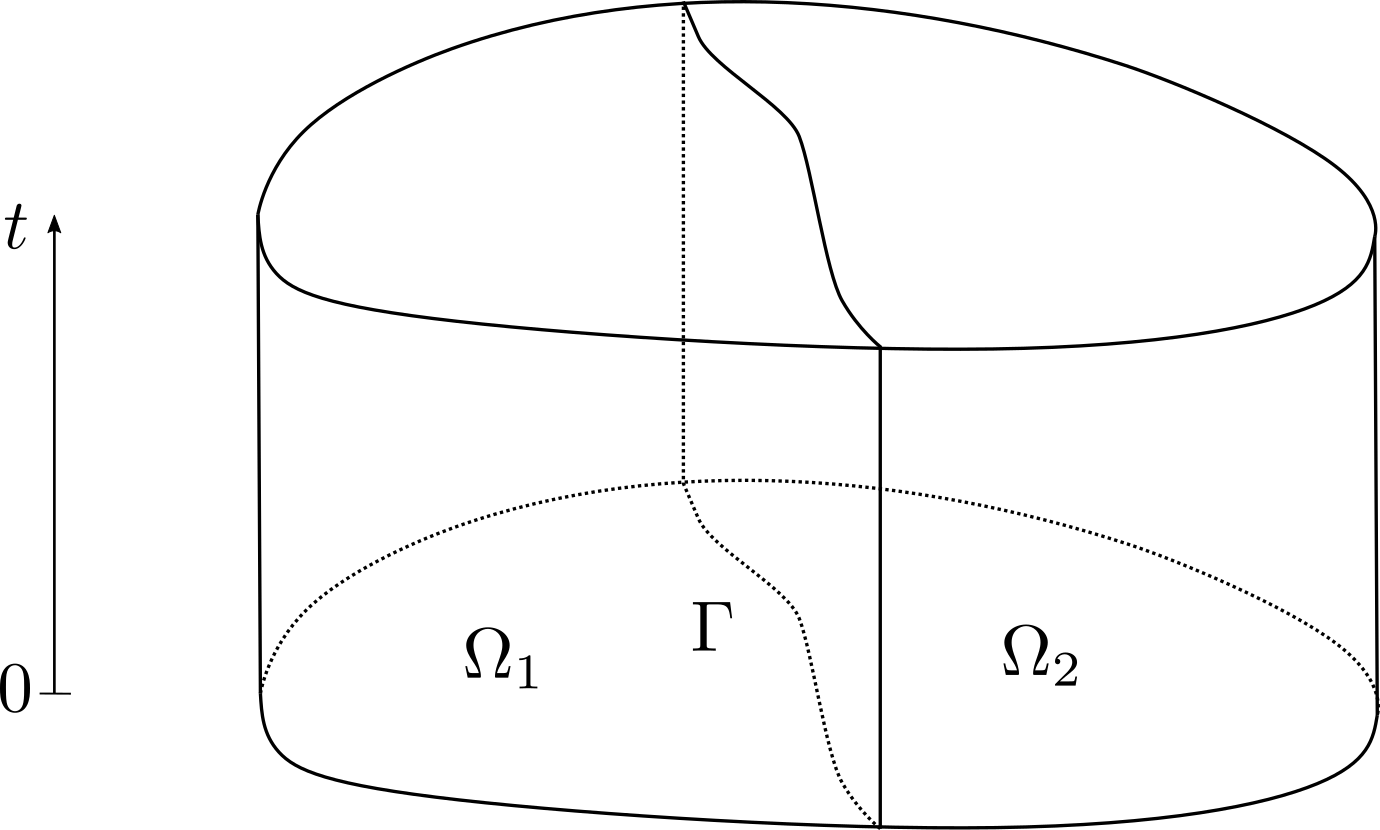}
\caption{The nonoverlapping decomposition of the space-time cylinder.}
\label{fig:spacetimecylinder}
\end{figure}%
With a fixed domain decomposition we can reformulate~\cref{eq:strong+} on $\Omega_i\times (0,\infty)$, $i=1,2$, connected via transmission conditions on $\Gamma\times(0,\infty)$. More precisely, we have the parabolic transmission problem
\begin{equation}\label{eq:TPStrong}
\left\{
     \begin{aligned}
            Au_i&=g  & &\text{in }\Omega_i\times(0,\infty), & &\\
             u_i&=\rho & &\text{on }(\partial\Omega_i\setminus\Gamma)\times(0,\infty), & &\\
             u_i&=0 & &\text{in }\Omega_i\times\{0\}, & &\text{ for }i=1,2,\\[5pt]
             u_1&=u_2  & &\text{on }\Gamma\times(0,\infty), & &\\
             \alpha(\nabla u_1)\cdot\nu_1 &= -\alpha(\nabla u_2)\cdot\nu_2  & &\text{on }\Gamma\times(0,\infty), & &
        \end{aligned}
\right.
\end{equation}
where $\nu_i$ denotes the unit outward normal vector of $\partial\Omega_i$. Alternating between the decomposed space-time cylinders and the transmission conditions generates the space-time generalizations of the standard domain decomposition methods for elliptic equations. 

Before introducing the space-time domain decomposition methods and their corresponding finite element discretizations, one needs to find a suitable functional analytic setting for the analysis. Observe that the standard variational framework for parabolic problems and the corresponding Petrov--Galerkin methods~\cite[Section~2.3]{steinbach19} are all based on trial spaces in the Bochner space intersection
\begin{displaymath}
H^1\bigl((0,\infty),H^{-1}(\Omega_i)\bigr)\cap L^2\bigl((0,\infty),H^{1}(\Omega_i)\bigr),
\end{displaymath}
which are, unfortunately, not well suited for our domain decompositions. The issue is that two functions $u_i$, $i=1,2$, in the above trial space, which coincide on $\Gamma\times\R$ in the sense of trace, can not be ``glued'' together into a new function in $H^1\bigl((0,\infty),H^{-1}(\Omega)\bigr)$; compare with~\cite[Example~2.14]{costabel90}. Hence, the transmission problem~\cref{eq:TPStrong} does not necessarily yield a solution to~\cref{eq:strong+} in this context. 

In order to remedy this, we consider the more general framework for parabolic problems with the trial/test spaces in 
\begin{displaymath}
H^{1/2}\bigl((0,\infty),L^2(\Omega_i)\bigr)\cap L^2\bigl((0,\infty),H^{1}(\Omega_i)\bigr),
\end{displaymath}
which originates from~\cite{lionsmagenes2} and resolves the above issue. In the context of space-time finite elements, this $H^{1/2}$-Bubnov--Galerkin setting was proposed by~\cite{fontesthesis}. The nonlocal fractional time derivatives arising in these numerical schemes can be effectively implemented, e.g., by introducing a temporal Galerkin basis with compact support in the frequency domain; see~\cite{Dahlgren04}, or by implementing an efficient evaluation of the temporal Hilbert transform~\cite{Langer21,steinbach20}. Note that the first approach requires the extension of the parabolic equation to all times $t\in\R$ and the second approach is given on finite time intervals. The $H^{1/2}$-framework has also been employed for space-time wavelet Galerkin discretizations~\cite{larsson15} and boundary element methods~\cite{costabel90}.

The rest of the paper is organized as follows: In~\cref{sec:prob} we state the precise problem formulation. We then derive the properties of the required function spaces and operators in~\cref{sec:prel,sec:tensor}. There are several technicalities associated with introducing the space-time trace operator acting on intersections of Bochner spaces, which are rarely considered in the numerical literature. We will therefore make an effort to state precise definitions and proofs. Next, we introduce the weak formulations in~\cref{sec:weak} and prove equivalence between the quasilinear parabolic equation and the transmission problem. The nonlinear time-dependent Steklov--Poincar\'e operators are analysed in~\cref{sec:SP}. These operators enable the interface reformulations of the transmission problem and the space-time domain decomposition methods. Based on the interface reformulations we introduce new modified Dirichlet--Neumann methods in~\cref{sec:lin}, and prove their linear convergence under minimal regularity assumptions via a variation of Zarantello's theorem. We also prove convergence of the Robin--Robin method in~\cref{sec:conv}, under mild regularity assumptions. Finally, the extension to the discrete space-time finite elements setting of~\cite{Dahlgren04,fontesthesis} and a set of numerical experiments are given in~\cref{sec:FEM}.

Throughout the paper $c$ and $C$ will denote generic positive constants.

\section{Problem setting}\label{sec:prob}
We make the following assumptions on the problem data $(\Omega,\Omega_i,\alpha,\beta,g,\rho)$ of~\cref{eq:strong+,eq:domain}.
\begin{assumption}\label{ass:domains}
The spatial domains $\Omega\subset \R^{d}$ and $\Omega_{i}$, $i=1,2$ are all bounded and Lipschitz. The spatial interface~$\Gamma$ and the sets $\partial\Omega\setminus\partial\Omega_{i}$, $i=1,2$, are all $(d-1)$-dimensional Lipschitz manifolds.
\end{assumption}
For a description of Lipschitz domains, see~\cite[Chapter 6.2]{kufner}. The assumptions are made in order ensure the existence of the spatial trace operator, as well as, to allow the usage of Poincar\'e's inequality.\\
\begin{assumption}\label{ass:eq}
The functions $\alpha$ and $\beta$ satisfy the conditions below, where $h_\ell$ denotes a given nonnegative function in~$L^\infty(\Omega)$. 
\begin{itemize}
\item $x\mapsto \alpha(x, y, z)$ and $x\mapsto \beta(x, y, z)$ are measurable on $\Omega$ for all $(y,z)\in\R\times\R^d$.
\item $\alpha$ and $\beta$ are Lipschitz continuous with respect to $(y,z)$. That is,
\begin{displaymath}
	|\alpha(x, y, z)-\alpha(x, y', z')|\leq h_1(x) \bigl(|z-z'|+|y-y'|\bigr)\quad\text{for all }y,y´\in\R,\, z,z'\in\R^d,
\end{displaymath}
and almost every $x\in\Omega$. The same holds for $\beta$.
\item $\alpha$ and $\beta$ satisfy the uniform monotonicity condition
\begin{displaymath}
	\bigl(\alpha(x, y, z)-\alpha(x, y', z')\bigr)\cdot(z-z')+\bigl(\beta(x, y, z)-\beta(x, y', z')\bigr)(y-y')\geq h_2(x)|z-z'|^2-h_3(x)|y-y'|^2 
\end{displaymath}
for all $y,y´\in\R$, $z,z'\in\R^d$, and almost every $x\in\Omega$. Here
\begin{equation}\label{eq:monbound}
	\inf_{x\in\Omega} h_2(x) > C_p \sup_{x\in\Omega} h_3(x),
\end{equation}
with $C_p$ denoting the largest Poincaré constant of $\Omega$ and $\Omega_i$, $i=1, 2$.
\end{itemize}
\end{assumption}
\begin{example}\label{ex:lin}
\emph{Consider a linear advection-diffusion-reaction equation in a heterogeneous media. That is, a parabolic equation governed by the vector field
\begin{displaymath}
Au=\partial_tu-\nabla\cdot\bigl(\alpha(x)\nabla u\bigr)+\beta(x)\cdot\nabla u + \gamma(x) u.
\end{displaymath}
\cref{ass:eq} then holds if the functions $\alpha,\beta,\gamma \in L^\infty(\Omega)$ fulfill~\cref{eq:monbound} with $h_2=\alpha-|\beta|^2/2$ and $h_3=|\gamma|^2+|\beta|^2/2$.}
\end{example}
\begin{example}
\emph{A quasilinear equation that satisfies~\cref{ass:eq} could have the form
\begin{displaymath}
        \alpha(x, u, \nabla u)=\nabla u+\gamma(x)\sin(|\nabla u|)(1,\ldots,1)^\mathrm{T}
\end{displaymath}
and $\beta(u)=\arctan(u)$, where $\gamma \in L^\infty(\Omega)$. \cref{ass:eq} is then valid if $h_2=1-\sqrt{d}\gamma$ and $h_3=1$ fulfill~\cref{eq:monbound}.}
\end{example}
\begin{assumption}\label{ass:f}
The source term $g$ is an element in $L^2\bigl(\Omega\times(0,\infty)\bigr)$ and the boundary value $\rho$ is an element in 
$H^{1/4}\bigl((0,\infty),L^2(\partial\Omega\bigr)\bigr)\cap L^2\bigl((0,\infty), H^{1/2}(\partial\Omega)\bigr)$.
\end{assumption}

With these assumptions we can apply the nonlinear $H^{1/2}$-framework of~\cite{fontes09} by extending the original parabolic problem~\cref{eq:strong+} into a more general class of equations given for all times $t\in\R$. We will only give a short summary of this procedure, and we refer to  to~\cite{fontes09,larsson15} for precise definitions and proofs. 

If~\cref{ass:domains,ass:eq,ass:f} hold, then the quasilinear parabolic equation~\cref{eq:strong+} has a unique weak solution $u^+\in H^{1/2}\bigl((0,\infty),L^2(\Omega)\bigr)\cap L^2\bigl((0,\infty),H^{1}(\Omega)\bigr)$, with its spatial trace equal to~$\rho$ and fulfilling the bound
\begin{displaymath}
    	\int_0^\infty\frac1t\|u^+(t)\|^2_{L^2(\Omega)}\,\mathrm{d}t<\infty.
\end{displaymath}
The latter is a weak interpretation of the homogeneous initial value, i.e., $u^+$ decays sufficiently rapidly to zero as~$t$ tends to $0^+$. 

Let $e$ denote the extension by zero of measurable functions on $\Omega\times (0,\infty)$ to $\Omega\times\R$, or on $\partial\Omega\times (0,\infty)$ to $\partial\Omega\times\R$. Due to the decay of $u^+$ at time zero and the fact that the temporal regularity of $g$ and $\rho$ are both stated in $H^s$, with $s<1/2$, the extended functions $(eu^+,eg,e\rho)$ all retain their spatial and temporal regularity. Furthermore, there exists, a non-unique, function $w\in H^{1/2}\bigl(\R,L^2(\Omega)\bigr)\cap L^2\bigl(\R,H^{1}(\Omega)\bigr)$ with its spatial trace equal to $e\rho$. Hence, the functions $(u,f)=(eu^+-w,eg)$ satisfy the extended parabolic equation 
\begin{equation}\label{eq:strong}
\left\{
     \begin{aligned}
            A(u+w)&=f  & &\text{in }\Omega\times\R,\\
            u&=0 & &\text{on }\partial\Omega\times\R.
        \end{aligned}
\right.
\end{equation}
Note that non-homogeneous initial values can also be included via a similar subtraction approach as for the space-time dependent boundary condition~$\rho$; see~\cite[Theorem~4.5]{fontes09}. However, we will refrain from adding this additional layer of technicalities in the proceeding analysis. 

The domain decomposition methods considered in the rest of this study will approximate solutions for the extended class of parabolic equations~\cref{eq:strong}, which enables the derivation of a rigorous convergence analysis. Note that the nonlinear differential operators $A$ and $A(\cdot+w)$ fulfill the very same properties, especially Lipschitz continuity and uniform monotonicity, required for the rest of the analysis; compare with the proof of~\cref{lemma:Fi}. Hence, for notational simplicity we will conduct the rest of the analysis for the case $w=\rho=0$ and with an arbitrary $f\in L^2(\Omega\times\R)$.

\begin{remark}
\emph{Observe that the extended problem~\cref{eq:strong} does \emph{not} involve a backward diffusion equation. Instead, the physical interpretation is that we consider every (forward) diffusion processes, including~\cref{eq:strong+}, that starts with zero concentration $u$ at time ``$t=-\infty$'', evolves over any given finite time interval according to the source terms $(w,f)$, and decays to zero as time tends to infinity.}
\end{remark}

\section{Preliminaries}\label{sec:prel}
We first recall some definitions from functional analysis. Let $X,Y$ be Hilbert spaces and denote the dual of $X$ by $X^*$. The corresponding dual paring in $X^*\times X$ is denoted by $\langle\cdot, \cdot\rangle$. A form $a:X\times X\rightarrow \R$ is referred to as Lipschitz continuous if
\begin{displaymath} 
        |a(u, w)-a(v, w)|\leq C\|u-v\|_X\|w\|_X\quad \textrm{for all } u, v, w\in X.
\end{displaymath}
Similarly, we say that an operator $G:X\rightarrow Y$ is Lipschitz continuous if
\begin{displaymath} 
        \|Gu-Gv\|_Y\leq C\|u-v\|_X\quad \textrm{for all } u, v\in X.
\end{displaymath}
Note that for a form $a:X\times X\rightarrow\R$ that is linear and bounded in the second argument we can define the operator 
\begin{displaymath} 
    G:X\rightarrow X^*:u\mapsto a(u,\cdot).
\end{displaymath}
It is clear that the operator $G$ is Lipschitz continuous if and only if the corresponding form $a$ is Lipschitz continuous.

A form $a:X\times X\rightarrow \R$ is said to be uniformly monotone in $Y$, where $X\hookrightarrow Y$, if
\begin{displaymath} 
        a(u, u-v)-a(v, u-v)\geq c\|u-v\|_Y^2\quad \textrm{for all } u, v\in X.
\end{displaymath}Similarly, an operator $G:X\rightarrow X^*$ is uniformly monotone in $Y$, where $X\hookrightarrow Y$, if
\begin{displaymath} 
        \langle Gu-Gv, u-v\rangle\geq c\|u-v\|_Y^2\quad \textrm{for all } u, v\in X.
\end{displaymath}
As for Lipschitz continuity, a form $a$ is uniformly monotone in $Y$ if and only if the corresponding operator $G$ is uniformly monotone in $Y$. If $X=Y$ we simply say that $a$ and $G$ are uniformly monotone. 

A linear operator $P:X\rightarrow X^*$ that is uniformly monotone in $X$ is said to be coercive. Also, it is called symmetric if
\begin{displaymath}
    \langle P\eta, \mu\rangle=\langle P\mu, \eta\rangle\quad\text{for all } \eta,\mu\in X.
\end{displaymath}
For a linear isomorphism $Q:X\rightarrow X$ we define the adjoint
\begin{displaymath}
    Q^*:X^*\rightarrow X^*:u \mapsto\langle u, Q\cdot \rangle
\end{displaymath}
and note that $Q^*G$ is uniformly monotone if and only if $a(\cdot, Q\cdot)$ is uniformly monotone.

As a notational convention, operators only depending on space or time are ``hatted'' and their extensions to space-time are denoted without hats, e.g., 
\begin{displaymath}
\hat{\Delta}:H_0^1(\Omega)\rightarrow H^{-1}(\Omega)\quad\text{and}\quad \Delta:L^{2}\bigl(\R,H_0^1(\Omega)\bigr)\rightarrow L^{2}\bigl(\R,H^{-1}(\Omega)\bigr).
\end{displaymath}

Consider the spatial function spaces
\begin{displaymath}
V=H_0^1(\Omega),\quad  V_i^0=H_0^1(\Omega_i),\quad\text{and}\quad
V_i=\{v\in H^1(\Omega_i): \restr{(\hat{T}_{\partial\Omega_i}v)}{\partial\Omega_i\setminus\Gamma}=0\}.
\end{displaymath}
Here, $\hat{T}_{\partial\Omega_i}: H^1(\Omega_i)\rightarrow H^{1/2}(\partial\Omega_i)$ denotes the trace operator, see~\cite[Theorem 6.8.13]{kufner}. The norm on $V_i$ and $V_i^0$ is given by 
\begin{displaymath}
	    \|v\|_{V_i}=\bigl(\|\nabla v\|^2_{L^2(\Omega_i)^{d}}+\|v\|^2_{L^2(\Omega_i)}\bigr)^{1/2}.
\end{displaymath}
By Poincaré's inequality and \cref{ass:domains} we have that $v\mapsto\|\nabla v\|_{L^2(\Omega_i)^{d}}$ is an equivalent norm on $V_i$ and $V_i^0$. 
The Hilbert spaces $V$, $V_i$, and $V_i^0$ are all separable. The space $H^{1/2}(\partial\Omega_i)$ is defined as
\begin{equation}\label{eq:slobodetskii}
	\begin{aligned}
	    &H^{1/2}(\partial\Omega_i)=\{v\in L^2(\partial\Omega_i):  \|v\|_{H^{1/2}(\partial\Omega_i)}<\infty\}\quad\text{with}\\
	    &\|v\|_{H^{1/2}(\partial\Omega_i)} =
	                 \Big(\int_{\partial\Omega_i}\int_{\partial\Omega_i}\frac{\abs{v(x)-v(y)}^2}{\abs{x-y}^{d}}\,\mathrm{d}x\,\mathrm{d}y
	                        +\|v\|^2_{L^2(\partial\Omega_i)}\Big)^{1/2}.
	\end{aligned}
\end{equation}
Denoting the extension by zero from $\Gamma$ to $\partial\Omega_i$ by $\hat{E}_i$ we define the Lions--Magenes space as
\begin{displaymath}
	    \Lambda=\{\mu\in L^2(\Gamma): \hat{E}_i\mu\in H^{1/2}(\partial\Omega_i)\}\quad\text{with}\quad 
	    \|\mu\|_\Lambda=\|\hat{E}_i\mu\|_{H^{1/2}(\partial\Omega_i)}.
\end{displaymath}
Note that~\cite[Lemma A.8]{widlund} yields the identification $\Lambda\cong[H^{1/2}_0(\Gamma), L^2(\Gamma)]_{1/2}$, which explains why $\Lambda$ is independent of $i=1,2$. 

Since $H^{1/2}(\partial\Omega_i)$ is a separable Hilbert space, so is  $\Lambda$. On $V_i$ the trace operator takes the form
\begin{displaymath}
	     \hat{T}_i: V_i\rightarrow \Lambda:v\mapsto\restr{(\hat{T}_{\partial\Omega_i}v)}{\Gamma},
\end{displaymath}	     
and is bounded; see~\cite[Lemma 4.4]{EHEE22}.

For the temporal function space $H^s(\R)$, $s\in [0,1]$, we use the Fourier definition
\begin{equation}\label{eq:sobolevfourier}
	\begin{aligned}
            &H^s(\R)=\{v\in L^2(\R): (1+(\cdot)^2)^{s/2}\hat{\F} v\in L_\C^2(\R)\}\quad\text{with}\\
             &\|v\|_{H^s(\R)}=\|(1+(\cdot)^2)^{s/2}\hat{\F} v\|_{L_\C^2(\R)}.
         \end{aligned}
\end{equation}
Here, $\hat{\F}$ is the Fourier transform and $L_\C^2(\R)$ is the complexification of the real Hilbert space $L^2(\R)$. Note that this is equivalent to the Sobolev--Slobodetskii definition \cref{eq:slobodetskii} for $s=1/2$ and $\partial\Omega_{i}=\R$; see~\cite[Lemma 16.3]{tartar}. We then introduce the temporal Hilbert transform by
\begin{displaymath}
        \hat{\Hc} v(t)=\lim_{\epsilon\rightarrow 0^{+}}\frac{1}{\pi}\int_{\abs{s}\geq\epsilon}\frac{1}{s}v(t-s)\,\mathrm{d}s.
\end{displaymath}
\noindent
From \cite[Chapters 4, 5]{king} we have that $\hat{\Hc}: L^2(\R)\rightarrow L^2(\R)$ is an isomorphism with inverse $\hat{\Hc}^{-1}=-\hat{\Hc}$ and
\begin{equation}\label{eq:hilbertfourier}
        \hat{\Hc}=\hat{\F}^{-1}\hat{M}_\sgn\hat{\F},
\end{equation}
where $\hat{M}_\sgn v(\xi)=-\mathrm{i}\,\sgn(\xi)v(\xi)$. The formula \cref{eq:hilbertfourier} combined with the definition \cref{eq:sobolevfourier} shows that $\hat{\Hc}: H^s(\R)\rightarrow H^s(\R)$ is an isomorphism. We also introduce the temporal half-derivatives as
\begin{equation}\label{eq:halftime}
\hat{\partial}^{1/2}_\pm=\hat{\F}^{-1}\hat{M}_{\pm}\hat{\F},
\end{equation}
where $\hat{M}_+v(\xi)=\sqrt{\mathrm{i}\xi} v(\xi)$ and $\hat{M}_-v(\xi)=\overline{\sqrt{\mathrm{i}\xi}}v(\xi)$. It is clear from the definition \cref{eq:sobolevfourier} that $\hat{\partial}^{1/2}_\pm: H^{1/2}(\R)\rightarrow L^2(\R)$ are bounded linear operators. The important relations between these operators are given in the lemma below.
\begin{lemma}\label{lemma:timeder}
For $v\in H^{1/2}(\R)$ one has the equalities 
\begin{displaymath}
\hat{\partial}^{1/2}_+ v = -\hat{\partial}^{1/2}_-\hat{\Hc} v\quad\text{and}\quad (\hat{\partial}^{1/2}_+ v\,,\, \hat{\partial}^{1/2}_- v)_{L^2(\R)}=0.
\end{displaymath}
Moreover, for $v\in H^1(\R)$ and $w\in H^{1/2}(\R)$ one has the fractional integration by parts formula
\begin{displaymath}
(\hat{\partial}_t v\,,\, w)_{L^2(\R)}=(\hat{\partial}^{1/2}_+ v\,,\, \hat{\partial}^{1/2}_- w)_{L^2(\R)}.
\end{displaymath}
\end{lemma}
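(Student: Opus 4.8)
The plan is to transfer all three identities to the frequency side, where every operator appearing becomes multiplication by an explicit symbol, and then verify the required pointwise relations between these symbols. Concretely, writing $m_+(\xi)=\sqrt{\mathrm i\xi}$, $m_-(\xi)=\overline{\sqrt{\mathrm i\xi}}$ and $m_{\sgn}(\xi)=-\mathrm i\,\sgn(\xi)$ for the symbols of $\hat{\partial}^{1/2}_+$, $\hat{\partial}^{1/2}_-$ and $\hat{\Hc}$, respectively, I would first fix the branch $\sqrt{\mathrm i\xi}=|\xi|^{1/2}\mathrm e^{\mathrm i(\pi/4)\sgn(\xi)}$ coming from $\mathrm i\xi=|\xi|\mathrm e^{\mathrm i(\pi/2)\sgn(\xi)}$. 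From this branch I record the two elementary symbol identities that drive everything: $m_+(\xi)^2=\mathrm i\xi$, so that in particular $m_+(\xi)\,\overline{m_-(\xi)}=m_+(\xi)^2=\mathrm i\xi$, and $m_+(\xi)=-m_-(\xi)\,m_{\sgn}(\xi)$, i.e. $\sqrt{\mathrm i\xi}=\mathrm i\,\sgn(\xi)\,\overline{\sqrt{\mathrm i\xi}}$, which I would check separately on $\xi>0$ and $\xi<0$ using the explicit polar form.

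The first equality of the lemma is then immediate: by \cref{eq:halftime,eq:hilbertfourier} the operators $\hat{\partial}^{1/2}_+$ and $-\hat{\partial}^{1/2}_-\hat{\Hc}$ are the Fourier multipliers with symbols $m_+$ and $-m_-m_{\sgn}$, and these symbols agree pointwise by the second symbol identity; since both operators are bounded on $H^{1/2}(\R)$, the equality holds there. For the fractional integration-by-parts formula I would use Plancherel: for $v\in H^1(\R)\hookrightarrow H^{1/2}(\R)$ one has $\hat{\F}(\hat{\partial}_t v)(\xi)=\mathrm i\xi\,\hat{\F}v(\xi)$, so both sides transform to $\int_\R \mathrm i\xi\,\hat{\F}v(\xi)\,\overline{\hat{\F}w(\xi)}\,\mathrm d\xi$, the right-hand side because its integrand equals $m_+\hat{\F}v\,\overline{m_-\hat{\F}w}=m_+\overline{m_-}\,\hat{\F}v\,\overline{\hat{\F}w}=\mathrm i\xi\,\hat{\F}v\,\overline{\hat{\F}w}$ by the first symbol identity.

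For the orthogonality relation the same Plancherel computation gives $(\hat{\partial}^{1/2}_+v\,,\,\hat{\partial}^{1/2}_-v)_{L^2(\R)}=\int_\R m_+(\xi)\overline{m_-(\xi)}\,|\hat{\F}v(\xi)|^2\,\mathrm d\xi=\int_\R \mathrm i\xi\,|\hat{\F}v(\xi)|^2\,\mathrm d\xi$. Since $L^2(\R)$ is a real space and $v$ is real-valued, $\hat{\F}v$ is Hermitian, i.e. $\hat{\F}v(-\xi)=\overline{\hat{\F}v(\xi)}$, so $\xi\mapsto|\hat{\F}v(\xi)|^2$ is even and the integrand $\mathrm i\xi\,|\hat{\F}v(\xi)|^2$ is odd and integrable; hence the integral vanishes. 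Equivalently, the $L^2(\R)$-inner product of the two real-valued functions $\hat{\partial}^{1/2}_\pm v$ is real, while $\mathrm i\int_\R\xi\,|\hat{\F}v|^2\,\mathrm d\xi$ is purely imaginary, forcing it to be zero.

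I expect no serious obstacle here; the only points requiring care are purely bookkeeping. These are choosing and consistently using a single branch of $\sqrt{\mathrm i\xi}$ on the two half-lines, matching the real $L^2(\R)$ inner product with the complex Plancherel pairing on the complexification $L^2_\C(\R)$, and observing the mild mapping properties that $\hat{\partial}^{1/2}_\pm$ send $H^{1/2}(\R)$ into $L^2(\R)$ while $\hat{\Hc}$ preserves $H^{1/2}(\R)$, so that each expression is well defined before the symbols are compared.
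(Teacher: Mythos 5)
Your proposal is correct and follows essentially the same route as the paper's own proof: both reduce all three identities to the Fourier symbols of $\hat{\partial}^{1/2}_\pm$, $\hat{\Hc}$, and $\hat{\partial}_t$, use the pointwise identity $\sqrt{\mathrm{i}\xi}=-\overline{\sqrt{\mathrm{i}\xi}}\,(-\mathrm{i}\,\sgn(\xi))$ for the first equality, the evenness of $|\hat{\F}v|^2$ (equivalently, realness of the inner product) for the orthogonality, and Plancherel with $m_+\overline{m_-}=\mathrm{i}\xi$ for the integration-by-parts formula. Your version is merely more explicit about the branch choice and the mapping properties, which the paper leaves implicit.
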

\begin{proof}
Let $v\in H^{1/2}(\R)$ and observe that
\begin{displaymath}
\sqrt{\mathrm{i}\xi}=-\overline{\sqrt{\mathrm{i}\xi}}(-\mathrm{i}\,\sgn(\xi)). 
\end{displaymath}
The Fourier characterization of the operators $\hat{\Hc},\hat{\partial}^{1/2}_+, \hat{\partial}^{1/2}_-$ then implies that 
\begin{displaymath}
\hat{\partial}^{1/2}_+v =-\hat{\partial}^{1/2}_-\hat{\Hc}v. 
\end{displaymath}
A similar argument, together with the fact that $|\hat{\F}v|^2$ is an even function, shows that
\begin{displaymath}
(\hat{\partial}^{1/2}_+ v, \hat{\partial}^{1/2}_- v)_{L^2(\R)}=\int_\R i\xi|\hat{\F}v|^2\mathrm{d}\xi=0
\end{displaymath}
for $v\in H^{1/2}(\R)$. Finally, the fractional integration by parts formula follows by the Fourier characterization $\hat{\partial}_t=\hat{\F}^{-1}\hat{M}\hat{\F}$, where $\hat{M}v(\xi)=\mathrm{i} \xi v(\xi)$.
\end{proof}
The Fourier characterization of the operator $\hat{\partial}^{1/2}_+$ and~\cref{eq:sobolevfourier} yield that
\begin{displaymath}
v\mapsto\bigl(\|\hat{\partial}^{1/2}_+v\|^2_{L^2(\R)}+\|v\|^2_{L^2(\R)}\bigr)^{1/2}
\end{displaymath}
is an equivalent norm on $H^{1/2}(\R)$.

\section{Tensor spaces}\label{sec:tensor}
Inspired by the finite element analysis in~\cite{steinbach20}, we identify our Bochner spaces in space-time as tensor spaces. A general introduction to tensor spaces can be found in ~\cite[Chapter 3.4]{weidmann}. 

We denote the algebraic tensor product of two (real) separable Hilbert spaces $X, Y$ by $X\otimes Y$. For elements of the form $x\otimes y$ the inner product is defined as
\begin{displaymath}
(x_1\otimes y_1, x_2\otimes y_2)_{X\otimes Y}=(x_1, x_2)_X(y_1, y_2)_Y
\end{displaymath}
and for arbitrary elements in $X\otimes Y$ the definition is extended by linearity. The closure of $X\otimes Y$ with respect to the induced norm is denoted by $X\tensor Y$. From these definitions it follows that  $X\tensor Y = Y\tensor X$. 

If $\{x_k\}_{k\geq 0}$ and $\{y_\ell\}_{\ell\geq 0}$ are orthonormal bases of $X$ and $Y$, respectively, then\linebreak $\{x_k\otimes y_\ell\}_{k,\ell\geq 0}$ is an orthonormal basis of $X\tensor Y$ and every $v\in X\tensor Y$ can be represented as 
\begin{equation}\label{eq:uiorth}
v=\sum_{k,\ell=0}^\infty c_{k,\ell}(x_k \otimes y_\ell)=\sum_{k=0}^\infty x_k \otimes z_k ,\quad z_k=\sum_{\ell=0}^\infty c_{k,\ell}y_\ell\in Y,
\end{equation} 
where $\{c_{k,\ell}\}_{k,\ell\geq 0}$ are real coefficients. This follows from~\cite[Theorem 3.12]{weidmann}. 

We recall the following result on the extensions of operators to tensor spaces. The proof of \cref{lemma:tensor} can be found in~\cite[Section 12.4.1]{aubin}. 
\begin{lemma}\label{lemma:tensor}
Let $X_{k}, Y_{k}$, $k=1,2$, be separable Hilbert spaces and $A: X_1\rightarrow X_2$, $B: Y_1\rightarrow Y_2$ be bounded linear operators. Then there is a bounded linear operator
\begin{displaymath}
                A\tensor B: X_1\tensor Y_1\rightarrow X_2\tensor Y_2
\end{displaymath}
 such that $(A\tensor B)(x\otimes y)=Ax\otimes By$ for every $x\in X_1, y\in Y_1$.
\end{lemma}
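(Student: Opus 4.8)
The plan is to construct the operator first on the algebraic tensor product $X_1\otimes Y_1$, then to establish the operator-norm bound $\|A\tensor B\|\le\|A\|\,\|B\|$, and finally to extend by density to the completion $X_1\tensor Y_1$. For the construction I would observe that the map $X_1\times Y_1\to X_2\tensor Y_2$ sending $(x,y)\mapsto Ax\otimes By$ is bilinear, so by the universal property of the algebraic tensor product it factors uniquely through a linear map $A\otimes B:X_1\otimes Y_1\to X_2\tensor Y_2$ satisfying $(A\otimes B)(x\otimes y)=Ax\otimes By$. This settles well-definedness without having to argue directly that a vanishing linear combination of simple tensors is mapped to zero.

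The key step is the norm estimate on the algebraic tensor product. I would factor $A\otimes B=(A\otimes I)(I\otimes B)$, reducing matters to bounding the two factors separately. To bound $I\otimes B$, fix an orthonormal basis $\{x_k\}_{k\ge0}$ of $X_1$ and use the representation~\cref{eq:uiorth} to write any $v\in X_1\otimes Y_1$ as $v=\sum_k x_k\otimes z_k$, where $\sum_k\|z_k\|_{Y_1}^2=\|v\|^2_{X_1\tensor Y_1}$ by Parseval. Since $(I\otimes B)v=\sum_k x_k\otimes Bz_k$ and $\{x_k\}$ remains orthonormal in $X_1$, a second application of Parseval gives $\|(I\otimes B)v\|^2=\sum_k\|Bz_k\|^2_{Y_2}\le\|B\|^2\sum_k\|z_k\|^2_{Y_1}=\|B\|^2\|v\|^2$. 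The analogous bound $\|(A\otimes I)v\|\le\|A\|\,\|v\|$ follows identically after invoking the symmetry $X\tensor Y=Y\tensor X$ to expand $v$ in an orthonormal basis of the first factor. Composing the two estimates yields $\|(A\otimes B)v\|\le\|A\|\,\|B\|\,\|v\|$ for every $v$ in the algebraic tensor product.

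With the bound in hand, the final step is routine: $X_1\otimes Y_1$ is dense in $X_1\tensor Y_1$ by definition of the completion, so the bounded linear map $A\otimes B$ extends uniquely to a bounded linear operator $A\tensor B:X_1\tensor Y_1\to X_2\tensor Y_2$ with the same norm bound, and the defining identity $(A\tensor B)(x\otimes y)=Ax\otimes By$ persists on simple tensors by continuity. The main obstacle is precisely the norm estimate: applying $A$ or $B$ to the basis vectors destroys orthonormality, so a naive expansion of $(A\otimes B)v$ in the images $\{Ax_k\otimes By_\ell\}$ does not admit a direct Parseval argument. The factorization $A\otimes B=(A\otimes I)(I\otimes B)$ circumvents this by keeping one tensor factor an orthonormal basis at each stage.
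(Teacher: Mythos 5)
The paper offers no proof of this lemma at all: it simply cites~\cite[Section~12.4.1]{aubin}. Your argument is the standard self-contained one, and its structure is sound: the universal property settles well-definedness on the algebraic tensor product, the factorization $A\otimes B=(A\otimes I)(I\otimes B)$ together with Parseval gives the cross-norm bound $\|A\tensor B\|\leq\|A\|\,\|B\|$, and density does the rest. Your closing remark also correctly identifies why the factorization is needed: a direct expansion in $\{Ax_k\otimes By_\ell\}$ loses orthonormality.

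One step needs a small patch. For $v$ in the \emph{algebraic} tensor product $X_1\otimes Y_1$, the expansion $v=\sum_k x_k\otimes z_k$ with respect to a full orthonormal basis $\{x_k\}_{k\geq0}$ of $X_1$, as in~\cref{eq:uiorth}, is in general an \emph{infinite} series converging only in the completion; the identity $(I\otimes B)v=\sum_k x_k\otimes Bz_k$ is then precisely the kind of term-by-term continuity statement you are not yet entitled to, since at this stage $I\otimes B$ is only a linear map on the algebraic tensor product. The fix is immediate: writing $v=\sum_{j=1}^n a_j\otimes b_j$ as a finite sum, replace $\{x_k\}$ by a finite orthonormal basis $e_1,\dots,e_m$ of $\mathrm{span}\{a_1,\dots,a_n\}$. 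Then $v=\sum_{k=1}^m e_k\otimes z_k$ is a finite orthogonal expansion, Parseval gives $\|v\|^2=\sum_{k=1}^m\|z_k\|^2_{Y_1}$, and $(I\otimes B)v=\sum_{k=1}^m e_k\otimes Bz_k$ holds by plain linearity, so $\|(I\otimes B)v\|\leq\|B\|\,\|v\|$ on all of $X_1\otimes Y_1$. (Alternatively, one can justify the term-by-term identity by noting that $a\mapsto a\otimes c$ is bounded from $X_1$ to $X_1\tensor Y_2$ for each fixed $c$.) With that repair, note also that $(I\otimes B)v$ remains a finite sum of simple tensors, so applying the algebraically defined $A\otimes I$ to it is legitimate; the composed bound then holds on the algebraic tensor product and your extension by density goes through unchanged.
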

From \cref{lemma:tensor} it follows that the spatial trace operators
\begin{align*}
            T_{\partial\Omega_i}&=I\tensor\hat{T}_{\partial\Omega_i}:L^2(\R)\tensor H^1(\Omega)\rightarrow L^2(\R)\tensor H^{1/2}(\partial\Omega_i), \\
            T_i&=I\tensor\hat{T}_i:L^2(\R)\tensor V_i\rightarrow L^2(\R)\tensor \Lambda
\end{align*}
are bounded. Furthermore, we have the identity
\begin{displaymath}
T_iv=\restr{(T_{\partial\Omega_i}v)}{\Gamma\times\R}\quad\text{for }v\in L^2(\R)\tensor V_i. 
\end{displaymath}
This is easily proven by validating the identity on the dense subset $L^2(\R)\otimes V_i$ and using the continuity of the operators. Note that the restrictions, as well as the extension by zero $E_i=I\tensor\hat{E}_i$, are all well defined operations due to~\cref{lemma:tensor}. 

For any separable Hilbert space $X$ one has the identification
\begin{displaymath}
H^s(\R)\tensor X\cong H^s(\R, X),
\end{displaymath}
where $H^s(\R, X)$, $s\in [0,1]$, is a Sobolev--Bochner space; see~\cite[Chapter 2.5.d]{hytonen}. The identification can be proven by noting the following facts. The norms coincide on $H^s(\R)\otimes X$, one has the relation $H^1(\R)\tensor X\cong H^1(\R, X)$, and the spaces $H^1(\R)\otimes X$ and $H^1(\R, X)$ are dense in $H^s(\R)\tensor X$ and $H^s(\R, X)$, respectively. For proofs see~\cite[Theorem 12.7.1]{aubin}, \cite[Theorem 3.12]{weidmann}, and~\cite[Proposition 6.1]{lionsmagenes1}.
 
 The Sobolev--Bochner spaces below will make up the core of the analysis:
\begin{align*}
            W&=H^{1/2}(\R)\tensor L^2(\Omega)\,\cap\, L^2(\R)\tensor V, \\
	    W_i^0&=H^{1/2}(\R)\tensor L^2(\Omega_i)\,\cap\, L^2(\R)\tensor V_i^0, \\
	    W_i&=H^{1/2}(\R)\tensor L^2(\Omega_i)\,\cap\, L^2(\R)\tensor V_i, \\
	     Z&=H^{1/4}(\R)\tensor L^2(\Gamma)\,\cap\, L^2(\R)\tensor \Lambda.
\end{align*}
\begin{lemma}\label{lemma:identities}
Let \cref{ass:domains} be valid. Then we have the identities
\begin{align}
            L^2(\R)\tensor V_i^0&=\{v\in L^2(\R)\tensor H^1(\Omega_i): T_{\partial\Omega_i}v=0\},\label{eq:Wi0}\\
            L^2(\R)\tensor V_i&=\{v\in L^2(\R)\tensor H^1(\Omega_i): \restr{(T_{\partial\Omega_i}v)}{(\partial\Omega_i\setminus\Gamma)\times\R}=0\},\label{eq:Wi}\\
            L^2(\R)\tensor \Lambda&=\{\mu\in L^2(\R)\tensor L^2(\Gamma): E_i\mu\in L^2(\R)\tensor H^{1/2}(\partial\Omega_i)\}.\label{eq:Z}
\end{align}
\end{lemma}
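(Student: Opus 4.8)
The three identities share a common structure: each spatial space on the left is realized as the kernel — or, for $\Lambda$, as the preimage of a closed subspace — of a bounded spatial operator, and the assertion is precisely that tensoring with $L^2(\R)$ commutes with forming this kernel or preimage. My plan is therefore to isolate an abstract ``coordinatewise'' principle and then specialize it to the three cases. Fix an orthonormal basis $\{\phi_k\}_{k\geq 0}$ of $L^2(\R)$. By \cref{eq:uiorth} every $v\in L^2(\R)\tensor X$ has the unique representation $v=\sum_k\phi_k\otimes z_k$ with $z_k\in X$ and $\sum_k\|z_k\|_X^2<\infty$, and orthonormality of $\{\phi_k\}$ yields $\|\sum_k\phi_k\otimes w_k\|^2=\sum_k\|w_k\|^2$ for any square-summable family. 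For bounded linear $\hat G\colon X\to Y$, the operator $I\tensor\hat G$ of \cref{lemma:tensor} acts termwise by continuity, $(I\tensor\hat G)v=\sum_k\phi_k\otimes\hat G z_k$, so that $(I\tensor\hat G)v=0$ if and only if $\hat G z_k=0$ for every $k$. This gives the kernel identity
\[
\{v\in L^2(\R)\tensor X:(I\tensor\hat G)v=0\}=L^2(\R)\tensor\ker\hat G,
\]
covering both inclusions at once.

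For \cref{eq:Wi0} I would apply this with $X=H^1(\Omega_i)$ and $\hat G=\hat T_{\partial\Omega_i}$, invoking the classical characterization $H_0^1(\Omega_i)=\ker\hat T_{\partial\Omega_i}$ valid for Lipschitz domains under \cref{ass:domains}. For \cref{eq:Wi} I would write the restriction to $\partial\Omega_i\setminus\Gamma$ as a bounded operator $\hat R\colon H^{1/2}(\partial\Omega_i)\to L^2(\partial\Omega_i\setminus\Gamma)$ (its norm is at most one), so that $V_i=\ker(\hat R\hat T_{\partial\Omega_i})$, and apply the kernel identity with $\hat G=\hat R\hat T_{\partial\Omega_i}$. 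The equality $(I\tensor\hat R)T_{\partial\Omega_i}v=\restr{(T_{\partial\Omega_i}v)}{(\partial\Omega_i\setminus\Gamma)\times\R}$, verified on simple tensors and extended by continuity exactly as for $T_i$, then converts the kernel condition into the stated restriction condition.

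The identity \cref{eq:Z} is the delicate one, since $\Lambda$ is not a kernel but the preimage $\{\mu\in L^2(\Gamma):\hat E_i\mu\in H^{1/2}(\partial\Omega_i)\}$ carrying the $H^{1/2}(\partial\Omega_i)$-norm of $\hat E_i\mu$. Expanding $\mu=\sum_k\phi_k\otimes w_k$ with $w_k\in L^2(\Gamma)$ gives $E_i\mu=\sum_k\phi_k\otimes\hat E_iw_k$ in $L^2(\R)\tensor L^2(\partial\Omega_i)$. The key step is a coordinatewise description of the range of the inclusion $I\tensor j$ induced by $j\colon H^{1/2}(\partial\Omega_i)\hookrightarrow L^2(\partial\Omega_i)$: since $j$ is injective and the temporal coefficients in the expansion are unique, $E_i\mu$ belongs to the image of $L^2(\R)\tensor H^{1/2}(\partial\Omega_i)$ if and only if each $\hat E_iw_k\in H^{1/2}(\partial\Omega_i)$ and $\sum_k\|\hat E_iw_k\|_{H^{1/2}(\partial\Omega_i)}^2<\infty$. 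By the very definition of the Lions--Magenes norm this says precisely that $w_k\in\Lambda$ for all $k$ with $\sum_k\|w_k\|_\Lambda^2<\infty$, i.e. that $\mu\in L^2(\R)\tensor\Lambda$, which is the claim.

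I expect \cref{eq:Z} to be the main obstacle. Unlike the two kernel arguments, it forces one to be precise about the meaning of ``$E_i\mu\in L^2(\R)\tensor H^{1/2}(\partial\Omega_i)$'' as an inclusion of one tensor space into another equipped with a strictly coarser norm, and to justify the coordinatewise characterization of the range of $I\tensor j$ through injectivity of $I\tensor j$ and uniqueness of the temporal coefficients. By contrast, \cref{eq:Wi0} and \cref{eq:Wi} reduce to the abstract kernel identity together with the standard spatial facts on the trace and restriction operators and are comparatively routine.
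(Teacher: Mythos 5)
Your proposal is correct, and its engine is the same as the paper's: expand along an orthonormal basis $\{x_k\}$ of $L^2(\R)$ as in \cref{eq:uiorth}, use orthonormality in the temporal factor to annihilate cross terms and thereby reduce each identity to a statement about the spatial coefficients, then reassemble via a Cauchy partial-sum argument. The differences are organizational, but worth recording. First, you package the mechanism as a single abstract kernel identity $\{v\in L^2(\R)\tensor X:(I\tensor\hat{G})v=0\}=L^2(\R)\tensor\ker\hat{G}$, which disposes of \cref{eq:Wi0,eq:Wi} uniformly; the paper instead runs the coefficient argument twice, pairing against $x_k\otimes \hat{T}_{\partial\Omega_i}z_k$ in $L^2(\R)\tensor L^2(\partial\Omega_i)$ and in $L^2(\R)\tensor L^2(\partial\Omega_i\setminus\Gamma)$, respectively. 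Second, for \cref{eq:Z} the pivot differs: the paper expands $E_i\mu$ in $L^2(\R)\tensor H^{1/2}(\partial\Omega_i)$, shows each coefficient vanishes on $\partial\Omega_i\setminus\Gamma$, and then invokes that $\hat{E}_i$ is an isometry of $\Lambda$ onto $\{z\in H^{1/2}(\partial\Omega_i):\restr{z}{\partial\Omega_i\setminus\Gamma}=0\}$ (citing \cite[Lemma~4.1]{EHEE22}) to pull the coefficients back to $\Lambda$; you expand $\mu$ itself in $L^2(\R)\tensor L^2(\Gamma)$, so the coefficients of $E_i\mu$ are automatically of the form $\hat{E}_iw_k$, and their membership and square-summability in $H^{1/2}(\partial\Omega_i)$ is, by the very definition of the Lions--Magenes norm, the statement $w_k\in\Lambda$ with $\sum_k\|w_k\|_\Lambda^2<\infty$. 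What you need instead---and correctly flag as the delicate point---is the coordinatewise description of the range of $I\tensor j$ for the injection $j:H^{1/2}(\partial\Omega_i)\hookrightarrow L^2(\partial\Omega_i)$, which indeed follows from injectivity of $j$ and uniqueness of the temporal coefficients. Your route buys independence from the external isometry lemma; the paper's buys a shorter argument given that \cite[Lemma~4.1]{EHEE22} is already part of its toolkit.
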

\begin{proof}
Throughout the proof let $\{x_k\}_{k\geq 0}$ be an orthonormal basis of $L^2(\R)$. Let $\{y_\ell\}_{\ell\geq 0}$ be an orthonormal basis of a separable Hilbert space $Y$ given by the context, and the corresponding $\{z_k\}_{k\geq 0}$ are defined as in~\cref{eq:uiorth}.

To prove the identity~\cref{eq:Wi0} recall that $V_i^0=\{y\in H^1(\Omega_i): \hat{T}_{\partial\Omega_i}y=0\}$; see~\cite[Theorem~6.6.4]{kufner}. 
This observation together with the continuity of $T_{\partial\Omega_i}$ then yields that every $v\in L^2(\R)\tensor V_i^0$ satisfies 
\begin{displaymath}
T_{\partial\Omega_i}v=\sum_{k,\ell=0}^\infty c_{k,\ell}(x_k \otimes \hat{T}_{\partial\Omega_i}y_\ell)=0.
\end{displaymath}
Conversely, suppose that $v\in L^2(\R)\tensor H^1(\Omega_i)$ with $T_{\partial\Omega_i}v=0$. By the continuity of $T_{\partial\Omega_i}$ we have 
\begin{displaymath}
0=T_{\partial\Omega_i}v=\sum_{k=0}^\infty x_k \otimes \hat{T}_{\partial\Omega_i}z_k.
\end{displaymath}
It then follows by the orthonormality of $\{x_k\}_{k\geq 0}$ that 
\begin{equation}\label{eq:ort}
0 = (T_{\partial\Omega_i}v \, ,\, x_k\otimes \hat{T}_{\partial\Omega_i}z_k)_{L^2(\R)\tensor L^{2}(\partial\Omega_i)}=\|\hat{T}_{\partial\Omega_i}z_k\|^2_{L^2(\partial\Omega_i)}.
\end{equation}
Hence, $z_k\in V_i^0$, $k=0,1,\dots$, and from~\cref{eq:uiorth} we can conclude that $v\in L^2(\R)\tensor V_i^0$.

The proof of~\cref{eq:Wi} is similar after observing the definition $V_i=\{v\in H^1(\Omega_i): \restr{(\hat{T}_{\partial\Omega_i}v)}{\partial\Omega_i\setminus\Gamma}=0\}$ and employing the $L^2(\R)\tensor L^{2}(\partial\Omega_i\setminus\Gamma)$ inner product in~\cref{eq:ort} instead of $L^2(\R)\tensor L^{2}(\partial\Omega_i)$.

To prove~\cref{eq:Z} first observe that $E_i$ can be interpreted as a map from $L^2(\R)\tensor \Lambda$ into $L^2(\R)\tensor H^{1/2}(\partial\Omega_i)$, by~\cref{lemma:tensor}. This immediately gives the inclusion from left to right. Conversely, assume that $\mu\in L^2(\R)\tensor L^2(\Gamma)$ with $E_i\mu\in L^2(\R)\tensor H^{1/2}(\partial\Omega_i)$. By~\cref{eq:uiorth}, we have the representation
\begin{displaymath}
E_i\mu=\sum_{k=0}^\infty x_k \otimes z_k,\quad z_k\in H^{1/2}(\partial\Omega_i).
\end{displaymath}
As $\restr{(E_i\mu)}{(\partial\Omega_i\setminus\Gamma)\times\R}=0$, the orthonormality of $\{x_k\}_{k\geq 0}$ yields that
\begin{displaymath}
0 = (E_i\mu \, ,\, x_k\otimes z_k)_{L^2(\R)\tensor L^{2}(\partial\Omega_i\setminus\Gamma)}=\|z_k\|^2_{L^2(\partial\Omega_i\setminus\Gamma)},
\end{displaymath}
i.e., $\restr{z_k}{\partial\Omega_i\setminus\Gamma}=0$. As $\hat{E}_i$ is an isometry from $\Lambda$ onto $\{\mu\in H^{1/2}(\partial\Omega_i): \restr{\mu}{\partial\Omega_i\setminus\Gamma}=0\}$; compare with~\cite[Lemma~4.1]{EHEE22}, we obtain that $\restr{z_k}{\Gamma}\in\Lambda$ for all $k=0,1,\dots$ Since
\begin{displaymath}
\mu=\restr{(E_i\mu)}{\Gamma\times\R}=\sum_{k=0}^\infty x_k \otimes \restr{z_k}{\Gamma}\quad\text{in }L^2(\R)\tensor L^2(\Gamma)
\end{displaymath}
and $\{\sum_{k=0}^n x_k \otimes \restr{z_k}{\Gamma}\}_{n\geq0}$ is a Cauchy sequence in $L^2(\R)\tensor \Lambda$, we conclude that\linebreak $\mu\in L^2(\R)\tensor \Lambda$.
\end{proof}

\begin{lemma}\label{lemma:density}
If \cref{ass:domains} holds, then $Z$ is dense in $L^2(\Gamma\times\R)$.
\end{lemma}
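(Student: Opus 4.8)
The plan is to exhibit a convenient dense subspace of $L^2(\Gamma\times\R)$ that is already contained in $Z$, namely the algebraic tensor product $H^{1/4}(\R)\otimes\Lambda$. First I would invoke the tensor identification $L^2(\Gamma\times\R)\cong L^2(\R)\tensor L^2(\Gamma)$ (the case $s=0$, $X=L^2(\Gamma)$ of $H^s(\R)\tensor X\cong H^s(\R,X)$), so that the claim becomes density with respect to the $L^2(\R)\tensor L^2(\Gamma)$ norm. Since $H^{1/4}(\R)\hookrightarrow L^2(\R)$ and $\Lambda\hookrightarrow L^2(\Gamma)$, any finite sum $\sum_j a_j\otimes b_j$ with $a_j\in H^{1/4}(\R)$ and $b_j\in\Lambda$ lies simultaneously in $H^{1/4}(\R)\tensor L^2(\Gamma)$ and in $L^2(\R)\tensor\Lambda$; hence $H^{1/4}(\R)\otimes\Lambda\subseteq Z$. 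It therefore suffices to prove that $H^{1/4}(\R)\otimes\Lambda$ is dense in $L^2(\R)\tensor L^2(\Gamma)$ in the $L^2$ norm.

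For this I would use the general principle that if $X_0$ is dense in $X$ and $Y_0$ is dense in $Y$, then $X_0\otimes Y_0$ is dense in $X\tensor Y$. Given $v\in X\tensor Y$, the definition of the completed tensor product supplies a finite sum $w=\sum_{j=1}^n x_j\otimes y_j\in X\otimes Y$ with $\|v-w\|$ arbitrarily small; replacing each $x_j$ and $y_j$ by nearby elements of $X_0$ and $Y_0$ and using the estimate $\|x\otimes y-x'\otimes y'\|\leq\|x-x'\|_X\|y\|_Y+\|x'\|_X\|y-y'\|_Y$, which follows from the cross-norm property $\|x\otimes y\|=\|x\|_X\|y\|_Y$, yields an element of $X_0\otimes Y_0$ close to $w$, hence close to $v$. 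Applying this with $X_0=H^{1/4}(\R)$, $X=L^2(\R)$, $Y_0=\Lambda$, and $Y=L^2(\Gamma)$ reduces the problem to the two scalar density statements.

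The density of $H^{1/4}(\R)$ in $L^2(\R)$ is immediate, since $C_c^\infty(\R)\subseteq H^{1/4}(\R)$ is already dense in $L^2(\R)$; this can also be read off the Fourier characterization~\cref{eq:sobolevfourier}. The density of $\Lambda$ in $L^2(\Gamma)$ is the only point requiring care, and is where I expect the main work to lie, because $s=1/2$ is the borderline case in which extension by zero is delicate. Here I would exploit the interpolation identification $\Lambda\cong[H_0^{1/2}(\Gamma),L^2(\Gamma)]_{1/2}$ recorded after~\cref{eq:slobodetskii}: since $H_0^{1/2}(\Gamma)$ embeds densely and continuously in $L^2(\Gamma)$, the intermediate space of such an interpolation couple is itself dense in the larger endpoint space. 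Alternatively one argues directly that $C_c^\infty(\Gamma)\subseteq\Lambda$, as the extension by zero of a function compactly supported in the relative interior of $\Gamma$ remains in $H^{1/2}(\partial\Omega_i)$, and that $C_c^\infty(\Gamma)$ is dense in $L^2(\Gamma)$. Combining the two scalar density statements with the tensor density principle above then shows that $H^{1/4}(\R)\otimes\Lambda$, and a fortiori $Z$, is dense in $L^2(\Gamma\times\R)$.
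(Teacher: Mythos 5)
Your proposal is correct and follows essentially the same route as the paper's proof: both reduce the claim to the density of the algebraic tensor product $H^{1/4}(\R)\otimes\Lambda$ in $L^2(\R)\tensor L^2(\Gamma)\cong L^2(\Gamma\times\R)$, which rests on the density of $H^{1/4}(\R)$ in $L^2(\R)$ and of $\Lambda$ in $L^2(\Gamma)$ together with the inclusion $H^{1/4}(\R)\otimes\Lambda\subset Z$. The only difference is presentational: you prove the tensor-density principle and the two scalar density facts directly, where the paper cites \cite[Theorem 3.12]{weidmann}, \cite[Lemma 15.10]{tartar}, and \cite[Lemma 4.2]{EHEE22}.
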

\begin{proof}
The spaces $H^{1/4}(\R)$ and $\Lambda$ are dense in $L^2(\R)$ and $L^2(\Gamma)$, respectively; see~\cite[Lemma 15.10]{tartar} and \cite[Lemma 4.2]{EHEE22}. Therefore, by \cite[Theorem 3.12]{weidmann}, the corresponding algebraic tensor space $H^{1/4}(\R)\otimes \Lambda$ is dense in $L^2(\R)\tensor L^2(\Gamma)\cong L^2(\Gamma\times\R)$. The density of $Z$ in $L^2(\Gamma\times\R)$ then follows as $H^{1/4}(\R)\otimes\Lambda\subset Z$.
\end{proof}
\begin{lemma}\label{lemma:tracei}
If \cref{ass:domains} holds, then  $T_i:W_i\rightarrow Z$ is bounded and has a linear bounded right inverse $R_i:Z\rightarrow W_i$.
\end{lemma}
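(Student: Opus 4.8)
The plan is to pass to the temporal frequency domain and reduce both assertions to frequency-uniform spatial estimates. Using the identification $H^s(\R)\tensor X\cong H^s(\R,X)$ and the vector-valued Plancherel theorem, I identify $v\in W_i$ with its temporal Fourier transform $\xi\mapsto\hat v(\xi)$, where $\hat v(\xi)\in V_i$ for a.e.\ $\xi$, and with $s(\xi)=(1+\xi^2)^{1/2}\ge1$ one has
\begin{displaymath}
\|v\|_{L^2(\R)\tensor V_i}^2=\int_\R\|\hat v(\xi)\|_{V_i}^2\,\mathrm{d}\xi,\qquad \|v\|_{H^{1/2}(\R)\tensor L^2(\Omega_i)}^2=\int_\R s(\xi)\|\hat v(\xi)\|_{L^2(\Omega_i)}^2\,\mathrm{d}\xi .
\end{displaymath}
Since $T_i=I\tensor\hat T_i$ acts only in space it commutes with the temporal transform, so $\widehat{T_iv}(\xi)=\hat T_i\hat v(\xi)$, and the $Z$-norms are expressed analogously with the weight $s(\xi)^{1/2}=(1+\xi^2)^{1/4}$ and the spaces $\Lambda$, $L^2(\Gamma)$. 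The bound in $L^2(\R)\tensor\Lambda$ is immediate from \cref{lemma:tensor} and the boundedness of $\hat T_i\colon V_i\to\Lambda$. For the $H^{1/4}(\R)\tensor L^2(\Gamma)$ component I would apply the multiplicative trace inequality $\|\hat T_i w\|_{L^2(\Gamma)}^2\le C\|w\|_{V_i}\|w\|_{L^2(\Omega_i)}$ frequency-wise, split the weight as $s(\xi)^{1/2}=1\cdot s(\xi)^{1/2}$, and use the Cauchy--Schwarz inequality in $\xi$:
\begin{displaymath}
\int_\R s(\xi)^{1/2}\|\hat T_i\hat v(\xi)\|_{L^2(\Gamma)}^2\,\mathrm{d}\xi \le C\Big(\int_\R\|\hat v(\xi)\|_{V_i}^2\,\mathrm{d}\xi\Big)^{1/2}\Big(\int_\R s(\xi)\|\hat v(\xi)\|_{L^2(\Omega_i)}^2\,\mathrm{d}\xi\Big)^{1/2}\le C\|v\|_{W_i}^2 ,
\end{displaymath}
which gives $\|T_iv\|_Z\le C\|v\|_{W_i}$ and settles boundedness.

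For the right inverse I would define $R_i$ frequency-wise by a shifted harmonic extension. Given $\mu\in Z$, for a.e.\ $\xi$ let $u_\xi\in V_i$ be the unique minimizer of $w\mapsto\|\nabla w\|_{L^2(\Omega_i)}^2+s(\xi)\|w\|_{L^2(\Omega_i)}^2$ subject to $\hat T_iw=\hat\mu(\xi)$; equivalently, $u_\xi$ is the Lax--Milgram solution of $\langle\nabla u_\xi,\nabla\phi\rangle+s(\xi)\langle u_\xi,\phi\rangle=0$ for all $\phi\in V_i^0$ carrying the prescribed trace. Setting $\widehat{R_i\mu}(\xi)=u_\xi$ defines a linear operator with $T_iR_i\mu=\mu$ by construction, and measurability of $\xi\mapsto u_\xi$ follows from the continuous dependence of the Lax--Milgram solution on $(s(\xi),\hat\mu(\xi))$. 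After integrating in $\xi$, boundedness of $R_i$ then reduces to the \emph{frequency-uniform} estimate
\begin{displaymath}
\|\nabla u_\xi\|_{L^2(\Omega_i)}^2+s(\xi)\|u_\xi\|_{L^2(\Omega_i)}^2\le C\big(\|\hat\mu(\xi)\|_\Lambda^2+s(\xi)^{1/2}\|\hat\mu(\xi)\|_{L^2(\Gamma)}^2\big),
\end{displaymath}
with $C$ independent of $\xi$.

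To prove this uniform estimate I would exploit the minimality of $u_\xi$ against a boundary-layer competitor of width $\delta=s(\xi)^{-1/2}$. Fix a bounded right inverse $\hat R\colon\Lambda\to V_i$ of the spatial trace (standard trace theory for Lipschitz domains; cf.\ \cite{EHEE22}), put $g=\hat R\hat\mu(\xi)$, and set $w=g\,\chi_\delta$ with $\chi_\delta(x)=\theta(\delta^{-1}\mathrm{dist}(x,\Gamma))$ for a fixed Lipschitz cutoff $\theta$ satisfying $\theta(0)=1$ and $\mathrm{supp}\,\theta\subset[0,1)$. Then $w\in V_i$, $\hat T_iw=\hat\mu(\xi)$, and $|\nabla\chi_\delta|\le C\delta^{-1}$ on the collar $\{\mathrm{dist}(\cdot,\Gamma)<\delta\}$, so that the energy of $w$ is bounded by $C\|\nabla g\|^2+C\delta^{-2}\int_{\{\mathrm{dist}<\delta\}}|g|^2$. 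A collar (co-area) estimate of the form $\int_{\{\mathrm{dist}<\delta\}}|g|^2\le C\big(\delta\|\hat\mu(\xi)\|_{L^2(\Gamma)}^2+\delta^2\|\nabla g\|^2\big)$ together with $\|\nabla g\|\le C\|\hat\mu(\xi)\|_\Lambda$ then yields exactly the right-hand side above, and minimality transfers the bound from $w$ to $u_\xi$.

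The main obstacle is precisely this uniform-in-$\xi$ collar estimate on the merely Lipschitz interface $\Gamma$: one must control the gradient cost $\delta^{-2}\int_{\{\mathrm{dist}<\delta\}}|g|^2$ by $\|\nabla g\|^2+\delta^{-1}\|\hat\mu(\xi)\|_{L^2(\Gamma)}^2$ with a constant that does \emph{not} degenerate as $\delta\to0$. On a smooth boundary this is a one-line normal-coordinate expansion, but on a Lipschitz $\Gamma$ (and with $\Gamma$ meeting $\partial\Omega_i\setminus\Gamma$ at the relative boundary $\partial\Gamma$) it calls for a careful Hardy-type argument, which I expect to be the genuinely delicate part of the proof; everything else is bookkeeping in the Fourier variable together with the already established boundedness of $\hat T_i$ and its spatial right inverse.
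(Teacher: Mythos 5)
Your proposal is essentially correct but takes a genuinely different route from the paper. The paper's proof is short and citation-based: by a density argument it identifies $T_{\partial\Omega_i}$ with the trace operator of \cite{costabel90}, quotes the anisotropic trace theorem proved there to get boundedness of $T_{\partial\Omega_i}$ from $H^{1/2}(\R)\tensor L^2(\Omega_i)\cap L^2(\R)\tensor H^1(\Omega_i)$ into $H^{1/4}(\R)\tensor L^2(\partial\Omega_i)\cap L^2(\R)\tensor H^{1/2}(\partial\Omega_i)$, and then builds the right inverse as $R_i=R_{\partial\Omega_i}E_i$, where $E_i$ is the extension by zero from $\Gamma$ to $\partial\Omega_i$ (bounded into the full-boundary spaces by \cref{lemma:identities} and the definition of $\Lambda$) and $R_{\partial\Omega_i}$ is the solution operator of the linear \emph{heat} equation with Dirichlet data, again citing \cite{costabel90} for solvability and the a priori bound. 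You instead re-derive both halves from scratch by Fourier transform in time: the multiplicative trace inequality plus Cauchy--Schwarz in $\xi$ gives the $H^{1/4}(\R)\tensor L^2(\Gamma)$ bound, and the right inverse is a frequency-wise \emph{elliptic} lifting, minimizing $\|\nabla w\|^2_{L^2(\Omega_i)}+s(\xi)\|w\|^2_{L^2(\Omega_i)}$ under the trace constraint, rather than a parabolic solve. The paper's route buys brevity and outsources all Lipschitz-boundary technicalities to the literature; yours buys a self-contained argument whose only inputs are spatial trace theory on Lipschitz domains and Lax--Milgram, and it makes transparent why the exponents $1/2$ and $1/4$ pair up. (It is, in essence, the way such anisotropic trace/extension theorems are proved in the sources the paper cites.)

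The one step you leave open, the uniform-in-$\xi$ collar estimate, is true and less delicate than you fear, precisely because of a feature of your own construction: the competitor $g=\hat R\hat\mu(\xi)$ lies in $V_i$, so its trace vanishes on $\partial\Omega_i\setminus\Gamma$ and hence $\|g\|_{L^2(\partial\Omega_i)}=\|\hat\mu(\xi)\|_{L^2(\Gamma)}$. Since $\{x\in\Omega_i:\mathrm{dist}(x,\Gamma)<\delta\}\subset\{x\in\Omega_i:\mathrm{dist}(x,\partial\Omega_i)<\delta\}$, it therefore suffices to prove the boundary-layer estimate on the \emph{full} boundary,
\begin{equation*}
\int_{\{\mathrm{dist}(\cdot,\partial\Omega_i)<\delta\}}|g|^2\,\mathrm{d}x
\leq C\bigl(\delta\|g\|^2_{L^2(\partial\Omega_i)}+\delta^2\|\nabla g\|^2_{L^2(\Omega_i)}\bigr),
\end{equation*}
which is standard for Lipschitz domains: work chart-by-chart in Lipschitz graph coordinates, where the boundary distance is comparable (with constants depending only on the Lipschitz character) to the vertical distance, and apply along vertical lines the one-dimensional inequality
\begin{equation*}
\int_0^\delta|u(t)|^2\,\mathrm{d}t\leq 2\delta|u(0)|^2+2\delta^2\int_0^\delta|u'(t)|^2\,\mathrm{d}t .
\end{equation*}
No behavior of $\Gamma$ near its relative boundary $\partial\Gamma$ enters, so the Hardy-type difficulty you anticipate never materializes. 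With that, your uniform estimate, and hence the boundedness of your $R_i$, is complete.
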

\begin{proof}
It follows from the density of $C^\infty_0(\R)\otimes C^\infty(\Bar{\Omega}_i)$ in $L^2(\R)\tensor H^1(\Omega_i)$; see \cite[Theorem 3.12]{weidmann}, that our definition of $T_{\partial\Omega_i}$ coincide with the definition given in~\cite[Lemma 2.4]{costabel90}. Hence, the restricted trace operator
\begin{displaymath}
 T_{\partial\Omega_i}:H^{1/2}(\R)\tensor L^2(\Omega_i)\cap L^2(\R)\tensor H^1(\Omega_i)\rightarrow
                                   H^{1/4}(\R)\tensor L^2(\Omega_i)\cap L^2(\R)\tensor H^{1/2}(\partial\Omega_i)
\end{displaymath}
is well defined and bounded. 

If $v\in W_i$ then $T_iv=\restr{(T_{\partial\Omega_i}v)}{\Gamma\times\R}\in H^{1/4}(\R)\tensor L^2(\Gamma)$. As $W_i\subset L^2(\R)\tensor V_i $, we have by definition that $T_iv\in L^2(\R)\tensor \Lambda$, i.e., $T_i:W_i\rightarrow Z$. The boundedness of $T_i:W_i\rightarrow Z$ then follows as the operators $T_{\partial\Omega_i}:H^{1/2}(\R)\tensor L^2(\Omega_i)\cap L^2(\R)\tensor H^1(\Omega_i)\rightarrow H^{1/4}(\R)\tensor L^2(\Omega_i)$ and $T_i:L^2(\R)\tensor V_i\rightarrow L^2(\R)\tensor \Lambda$ are bounded.

We can explicitly construct a linear right inverse to $T_i$ by considering the linear heat equation, with $(\alpha,\beta,f)=(\nabla u,0,0)$. More precisely,  for any $\mu\in H^{1/4}(\R)\tensor L^2(\partial\Omega_i)\cap L^2(\R)\tensor H^{1/2}(\partial\Omega_i)$, there exists a weak solution $u_i\in H^{1/2}(\R)\tensor L^2(\Omega_i)\cap L^2(\R)\tensor H^1(\Omega_i)$ to the heat equation with $T_{\partial\Omega_i}u_i=\mu$. This follows by~\cite[Theorem 2.9 and Remark 2.10]{costabel90}. Furthermore, according to \cite[p. 515]{costabel90}, one has the bound  
\begin{displaymath}
    \|u_i\|_{H^{1/2}(\R)\tensor L^2(\Omega_i)\cap L^2(\R)\tensor H^1(\Omega_i)}\leq C\|\mu\|_{H^{1/4}(\R)\tensor L^2(\partial\Omega_i)\cap L^2(\R)\tensor H^{1/2}(\partial\Omega_i)}.
\end{displaymath}
Hence, $R_{\partial\Omega_i}:\mu\mapsto u_i$ is a linear bounded right inverse to $T_{\partial\Omega_i}$. If $\eta\in Z$, then by~\cref{eq:Wi,eq:Z} we obtain that $R_i=R_{\partial\Omega_i}E_i:Z\rightarrow W_i$ is a bounded linear right inverse to $T_i$.
\end{proof}

\begin{lemma}\label{lemma:abstracthilbert}
Let $X$ be a separable Hilbert space and $s\in [0,1]$. The Hilbert transform 
\begin{displaymath}
\Hc=\hat{\Hc}\tensor I:H^s(\R)\tensor X\rightarrow H^s(\R)\tensor X
\end{displaymath} 
is then an isomorphism.
\end{lemma}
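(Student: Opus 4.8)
The plan is to exhibit an explicit bounded inverse of $\Hc$ built from the temporal Hilbert transform, and to verify the inverse relations using the multiplicativity of the tensor-product construction from \cref{lemma:tensor}. Recall from \cref{eq:hilbertfourier} and the discussion following it that $\hat{\Hc}:H^s(\R)\to H^s(\R)$ is an isomorphism with $\hat{\Hc}^{-1}=-\hat{\Hc}$, while $I:X\to X$ is trivially an isomorphism. By \cref{lemma:tensor}, both $\Hc=\hat{\Hc}\tensor I$ and the candidate inverse $(-\hat{\Hc})\tensor I$ are therefore bounded linear operators on $H^s(\R)\tensor X$.

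First I would establish the composition rule $(A_1\tensor B_1)(A_2\tensor B_2)=(A_1A_2)\tensor(B_1B_2)$ for bounded operators on the relevant spaces. On an elementary tensor $x\otimes y$ both sides equal $A_1A_2x\otimes B_1B_2y$, directly from the defining property of the tensor-extended operators in \cref{lemma:tensor}. Since both sides are bounded linear operators that agree on the algebraic tensor product $H^s(\R)\otimes X$, which is dense in $H^s(\R)\tensor X$, they coincide everywhere by continuity.

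Applying this rule with $A_1=\hat{\Hc}$, $A_2=-\hat{\Hc}$, and $B_1=B_2=I$ gives $\Hc\bigl((-\hat{\Hc})\tensor I\bigr)=\bigl(\hat{\Hc}(-\hat{\Hc})\bigr)\tensor I=I\tensor I$, and the analogous computation in the other order gives $\bigl((-\hat{\Hc})\tensor I\bigr)\Hc=I\tensor I$. Here $I\tensor I$ is the identity on $H^s(\R)\tensor X$, since it fixes every elementary tensor and is continuous. Hence $(-\hat{\Hc})\tensor I=-\Hc$ is a two-sided bounded inverse of $\Hc$, so $\Hc$ is an isomorphism.

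The only nonroutine point is the multiplicativity of the tensor extension, that is, the density-and-continuity argument of the second step; once this is in place everything reduces to the scalar facts already recorded for $\hat{\Hc}$. I expect this to be the main, though still mild, obstacle.
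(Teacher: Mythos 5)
Your proposal is correct and follows essentially the same route as the paper's proof: the paper also obtains boundedness of $\Hc=\hat{\Hc}\tensor I$ from \cref{lemma:tensor} and deduces the isomorphism property from the fact that $\hat{\Hc}$ is an isomorphism on $H^s(\R)$ via \cref{eq:hilbertfourier}. The only difference is that you spell out the multiplicativity of the tensor extension and the density-and-continuity argument, which the paper leaves implicit.
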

\begin{proof}
According to \cref{lemma:tensor} the operator $\hat{\Hc}\otimes I$ extends to a bounded linear operator $\Hc: H^s(\R)\tensor X\rightarrow H^s(\R)\tensor X$. From \cref{eq:hilbertfourier} it follows that $\Hc:H^s(\R)\tensor X\rightarrow H^s(\R)\tensor X$ is an isomorphism.
\end{proof}

\begin{lemma}\label{lemma:hilbert}
If \cref{ass:domains} holds then the restricted operators
\begin{displaymath} 
\Hc_i:W_i\rightarrow W_i\quad\text{and}\quad\Hc_\Gamma:Z\rightarrow Z
\end{displaymath} 
are isomorphisms and satisfy $T_i\Hc_iv=\Hc_\Gamma T_iv$ for all $v\in W_i$.
\end{lemma}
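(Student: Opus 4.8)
\section*{Proof proposal}

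The plan is to realize both $\Hc_i$ and $\Hc_\Gamma$ as the restriction of the space-time Hilbert transform $\Hc=\hat{\Hc}\tensor I$ to the intersection spaces $W_i$ and $Z$, and to extract every claimed property from \cref{lemma:abstracthilbert} by applying it separately to each of the two tensor factors whose intersection defines these spaces. Since $\hat{\Hc}$ acts only in the temporal variable while $\hat{T}_i$ acts only in the spatial variable, the intertwining relation will then reduce to a commutation of $\hat{\Hc}\tensor I$ with $I\tensor\hat{T}_i$ on elementary tensors, extended by continuity.

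For the isomorphism on $W_i$ I would invoke \cref{lemma:abstracthilbert} once with $(X,s)=(L^2(\Omega_i),1/2)$ and once with $(X,s)=(V_i,0)$. The first yields that $\Hc$ is an isomorphism of $H^{1/2}(\R)\tensor L^2(\Omega_i)$ and the second that it is an isomorphism of $L^2(\R)\tensor V_i$. For $v\in W_i$ both bounds apply to one and the same ambient element $\Hc v\in L^2(\R)\tensor L^2(\Omega_i)$, which therefore lies in both factors, hence in $W_i$, so $\Hc$ maps $W_i$ boundedly into itself. Using $\hat{\Hc}^{-1}=-\hat{\Hc}$, the inverse $-\Hc$ preserves $W_i$ by the identical argument, whence $\Hc_i$ is an isomorphism of $W_i$. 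Note that the boundary conditions in $V_i$ cause no difficulty, precisely because they are absorbed into the Hilbert space $X=V_i$ to which \cref{lemma:abstracthilbert} is applied. Replacing the factors by $(L^2(\Gamma),1/4)$ and $(\Lambda,0)$ gives the isomorphism $\Hc_\Gamma$ on $Z$ verbatim.

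For the intertwining relation I would first establish, as operators on $L^2(\R)\tensor V_i$, that $(I\tensor\hat{T}_i)(\hat{\Hc}\tensor I)=(\hat{\Hc}\tensor I)(I\tensor\hat{T}_i)$. Both sides send an elementary tensor $x\otimes y$, with $x\in L^2(\R)$ and $y\in V_i$, to $\hat{\Hc}x\otimes\hat{T}_iy$, so they agree on the algebraic tensor product $L^2(\R)\otimes V_i$; since this is dense in $L^2(\R)\tensor V_i$ by \cite[Theorem 3.12]{weidmann} and all operators involved are bounded, the identity propagates to all of $L^2(\R)\tensor V_i$. For $v\in W_i$ the value $\Hc_i v$ coincides with that of $\hat{\Hc}\tensor I$ computed on $L^2(\R)\tensor V_i$, and likewise $\Hc_\Gamma(T_iv)$ coincides with $\hat{\Hc}\tensor I$ computed on $L^2(\R)\tensor\Lambda$, because all these operators are extensions of the same $\hat{\Hc}\otimes I$. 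Combining the commutation identity with these consistency statements gives $T_i\Hc_iv=(\hat{\Hc}\tensor I)T_iv=\Hc_\Gamma T_iv$, as required.

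The step demanding the most care is this consistency: one must verify that the Hilbert transform obtained by restricting the ambient $\Hc$ to the intersection space $W_i$ agrees, as a concrete element of $L^2(\R)\tensor L^2(\Omega_i)$, with the operators furnished by \cref{lemma:abstracthilbert} on each individual tensor factor, and similarly for $Z$. This amounts to tracking the embeddings of $W_i$ and $Z$ into the ambient $L^2$ tensor spaces and using that every operator in sight extends $\hat{\Hc}\otimes I$ on the respective algebraic tensor product. Once this bookkeeping is in place, the isomorphism and the intertwining claims follow directly from \cref{lemma:abstracthilbert} and the elementary-tensor computation, requiring no further analytic input.
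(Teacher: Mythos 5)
Your proposal is correct and follows essentially the same route as the paper's proof: apply \cref{lemma:abstracthilbert} to each tensor factor defining $W_i$ (respectively $Z$), conclude the isomorphism property on the intersection since all these operators are extensions of the same $\hat{\Hc}\otimes I$, and obtain the intertwining relation $T_i\Hc_iv=\Hc_\Gamma T_iv$ by checking it on elementary tensors and extending by density of $L^2(\R)\otimes V_i$ in $L^2(\R)\tensor V_i$. Your explicit attention to the consistency of the various extensions and your use of $\hat{\Hc}^{-1}=-\hat{\Hc}$ to get surjectivity onto $W_i$ only make explicit what the paper compresses into ``since the operators coincide on $W_i$''.
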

\begin{proof}
By different choices of $s\in [0,1]$ and separable Hilbert spaces $X$ in~\cref{lemma:abstracthilbert} one obtains the isomorphisms
\begin{align*}
        \Hc_i&:L^2(\Omega_i\times\R)\rightarrow L^2(\Omega_i\times\R)\\
        \Hc_i&:H^{1/2}(\R)\tensor L^2(\Omega_i)\rightarrow H^{1/2}(\R)\tensor L^2(\Omega_i)\\
        \Hc_i&:L^2(\R)\tensor V_i\rightarrow L^2(\R)\tensor V_i.
\end{align*}
Since the operators coincide on $W_i$ we have that $\Hc_i:W_i\rightarrow W_i$ is an isomorphism. A similar argument shows that $\Hc_\Gamma:Z\rightarrow Z$ is an isomorphism. For $v\in L^2(\R)\otimes V_i$ we have
\begin{displaymath} 
T_i\Hc_iv=(I\otimes \hat{T}_i)(\hat{\Hc}\otimes I_{L^2(\Omega_i)})v=
                \hat{\Hc}\otimes \hat{T}_iv=
                (\hat{\Hc}\otimes I_{L^2(\Gamma)})(I\otimes \hat{T}_i)v=
                \Hc_\Gamma T_iv,
\end{displaymath} 
and by density the identity holds for $v\in W_i\subset L^2(\R)\tensor V_i$.
\end{proof}
Finally, let $\varphi\in [0,\pi/2]$ be a parameter to be chosen later and define
\begin{displaymath} 
\Hc_i^\varphi=\cos{(\varphi)}I-\sin{(\varphi)}\Hc_i\quad\text{and}\quad\Hc_\Gamma^\varphi=\cos{(\varphi)}I-\sin{(\varphi)}\Hc_\Gamma.
\end{displaymath} 
It follows from \cref{lemma:hilbert} that $\Hc_i^\varphi:W_i\rightarrow W_i$ and $\Hc_\Gamma^\varphi:Z\rightarrow Z$ are isomorphisms and
\begin{equation}\label{eq:hilbertphi}
T_i\Hc_i^\varphi=\Hc_\Gamma^\varphi T_i.
\end{equation}
The restricted operators $\Hc^\varphi_i:W^0_i\rightarrow W^0_i$ and $\Hc^\varphi_{\Omega}:W\rightarrow W$ are also isomorphisms. 

\section{Weak space-time formulations}\label{sec:weak}
The aim is now to derive a variational framework in which we can state the weak forms of the quasilinear parabolic equation~\cref{eq:strong} and of the corresponding transmission problem. As a start, \cref{lemma:tensor} yields that the extensions of the spatial gradient
\begin{displaymath} 
\nabla: L^2(\R)\tensor H^1(\Omega_i)\rightarrow L^2(\Omega_i\times\R)^d
\end{displaymath} 
and the temporal half-derivatives
\begin{displaymath} 
\partial^{1/2}_\pm: H^{1/2}(\R)\tensor L^2(\Omega_i)\rightarrow L^2(\Omega_i\times\R)
\end{displaymath} 
are all bounded linear operators. Note that we leave out the dependence on $i=1,2$ on the above operators for the sake of readability.
From~\cref{ass:eq} and~\cite[Theorem 3.1]{lucchetti} we obtain that the functions $\alpha,\beta$ extend to well defined Nemytskii operators 
\begin{displaymath}
\alpha: L^2(\Omega_i\times\R)\times L^2(\Omega_i\times\R)^d\rightarrow L^2(\Omega_i\times\R)^d,\quad
\beta: L^2(\Omega_i\times\R)\times L^2(\Omega_i\times\R)^d\rightarrow L^2(\Omega_i\times\R).
\end{displaymath}
Hence, the forms $a:W\times W\rightarrow \R$ and $a_i: W_i\times W_i\rightarrow\R$, $i=1,2$, defined by the formulas
\begin{align*}
        a(u, v)&=\int_\R\int_{\Omega}\partial^{1/2}_+u\partial^{1/2}_-v+\alpha(x, u, \nabla u)\cdot\nabla v+\beta(x, u, \nabla u)v\,\,\mathrm{d}x\,\mathrm{d}t\quad\text{and}\\
        a_i(u_i, v_i)&=\int_\R\int_{\Omega_i}\partial^{1/2}_+u_i\partial^{1/2}_-v_i+\alpha(x, u_i, \nabla u_i)\cdot\nabla v_i+\beta(x, u_i, \nabla u_i)v_i\,\,\mathrm{d}x\,\mathrm{d}t,
\end{align*}
respectively, are all well defined. 

Let $f\in L^2(\Omega\times \R)$ and introduce $f_i=\restr{f}{\Omega_i\times\R}$. The weak, or variational, formulation of the equation~\cref{eq:strong} is to find $u\in W$ such that
\begin{equation}\label{eq:weak}
	    a(u, v)=(f, v)_{L^2(\Omega\times \R)}\quad \textrm{for all }v\in W.
\end{equation}
Here, the weak problem is derived by multiplying \cref{eq:strong} by $v\in W$, integrating, and using the fractional integration by parts formula from \cref{lemma:timeder} extended to the tensor setting. For the sake of completeness, we also note that the weak problem on $\Omega_i\times\R$ , with homogenous boundary conditions, is given by finding $u_i\in W_i^0$ such that
\begin{equation}\label{eq:weakai}
a_i(u_i, v_i)=(f_i, v_i)_{L^2(\Omega\times \R)}\quad \textrm{for all } v_i\in W_i^0.
\end{equation}

The following result is based on the argumentation in~\cite[Section 2.8]{larsson15}, which is a summary of the monotone-equivalency idea from~\cite{fontesthesis}. 
\begin{lemma}\label{lemma:ai}
Let~\cref{ass:domains,ass:eq,ass:f} be valid. Then the form $a_i:W_i\times W_i\rightarrow\R$ is Lipschitz continuous and the form $a_i(\cdot, \Hc_i^\varphi\cdot):W_i\times W_i\rightarrow\R$ is uniformly monotone for a sufficiently small $\varphi>0$. Moreover, $a_i$ is also uniformly monotone in $L^2(\R)\tensor H^1(\Omega_i)$. An analogous result holds for~$a$. In particular there exists unique solutions to \cref{eq:weak,eq:weakai}, respectively.
\end{lemma}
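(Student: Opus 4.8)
The plan is to establish each property in turn, reducing everything to the temporal identities of \cref{lemma:timeder} and the pointwise bounds of \cref{ass:eq}. Throughout set $w=u-v$ and use that $\|\cdot\|_{W_i}^2$ is equivalent to $\|\partial^{1/2}_+\cdot\|_{L^2(\Omega_i\times\R)}^2+\|\nabla\cdot\|_{L^2(\Omega_i\times\R)^d}^2$ on $W_i$, by the equivalent norm on $H^{1/2}(\R)$ and, via Poincaré's inequality and \cref{ass:domains}, the equivalent norm on $V_i$. For Lipschitz continuity I would split $a_i$ into its temporal part $\int_\R\int_{\Omega_i}\partial^{1/2}_+u\,\partial^{1/2}_-v$ and its spatial part containing $\alpha,\beta$. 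The temporal part is bilinear, so Cauchy--Schwarz together with the boundedness of $\partial^{1/2}_\pm:H^{1/2}(\R)\tensor L^2(\Omega_i)\to L^2(\Omega_i\times\R)$ gives the bound. For the spatial part I would insert the Lipschitz estimates on the Nemytskii operators $\alpha,\beta$ from \cref{ass:eq} (with $h_1\in L^\infty(\Omega)$) and apply Cauchy--Schwarz in $L^2(\Omega_i\times\R)$; this is controlled by $C\|w\|_{L^2(\R)\tensor V_i}\|\cdot\|_{L^2(\R)\tensor V_i}\le C\|w\|_{W_i}\|\cdot\|_{W_i}$.

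The heart of the proof is the monotonicity of $a_i(\cdot,\Hc_i^\varphi\cdot)$. Expanding $\Hc_i^\varphi=\cos\varphi\,I-\sin\varphi\,\Hc_i$, the temporal contribution to $a_i(u,\Hc_i^\varphi w)-a_i(v,\Hc_i^\varphi w)$ is $\int\partial^{1/2}_+w\,\partial^{1/2}_-(\Hc_i^\varphi w)$, which by the tensorized identities $(\partial^{1/2}_+w,\partial^{1/2}_-w)_{L^2}=0$ and $\partial^{1/2}_-\Hc_iw=-\partial^{1/2}_+w$ from \cref{lemma:timeder} collapses to exactly $\sin\varphi\,\|\partial^{1/2}_+w\|_{L^2}^2$. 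The spatial contribution splits by linearity: the $\cos\varphi$ piece is bounded below, using the uniform monotonicity of \cref{ass:eq} and Poincaré's inequality (invoking \cref{eq:monbound}), by $c_0\cos\varphi\,\|\nabla w\|_{L^2}^2$ with $c_0>0$, while the $\sin\varphi$ piece is bounded above, by the spatial Lipschitz estimate and the boundedness of $\Hc_i$ on $L^2(\R)\tensor V_i$, by $C\sin\varphi\,\|\nabla w\|_{L^2}^2$. Collecting the three terms yields
\begin{displaymath}
a_i(u,\Hc_i^\varphi w)-a_i(v,\Hc_i^\varphi w)\ge \sin\varphi\,\|\partial^{1/2}_+w\|_{L^2}^2+\bigl(c_0\cos\varphi-C\sin\varphi\bigr)\|\nabla w\|_{L^2}^2.
\end{displaymath}
For $\varphi>0$ small the second coefficient stays positive, and by the norm equivalence above the right-hand side dominates $c\|w\|_{W_i}^2$.

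For the remaining monotonicity claim I would take $\varphi=0$: the temporal term drops by \cref{lemma:timeder} and \cref{ass:eq} directly gives $a_i(u,w)-a_i(v,w)\ge c_0\|\nabla w\|_{L^2}^2$, which is equivalent to $\|w\|_{L^2(\R)\tensor H^1(\Omega_i)}^2$ since $w\in L^2(\R)\tensor V_i$. The statements for $a$ follow verbatim with $(\Omega,V,\Hc_\Omega^\varphi)$ replacing $(\Omega_i,V_i,\Hc_i^\varphi)$. Finally, identifying $a$ with the Lipschitz operator $G:W\to W^*$, $Gu=a(u,\cdot)$, the Preliminaries show that $(\Hc_\Omega^\varphi)^*G$ is uniformly monotone (equivalent to monotonicity of $a(\cdot,\Hc_\Omega^\varphi\cdot)$) and Lipschitz; the Browder--Minty theorem then makes it a bijection of $W$ onto $W^*$, and since $(\Hc_\Omega^\varphi)^*$ is an isomorphism this is equivalent to unique solvability of \cref{eq:weak}. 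The same argument on $W_i^0$ with $\Hc_i^\varphi$ settles \cref{eq:weakai}.

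The step I expect to be the main obstacle is the monotonicity estimate, and specifically the verification that the $\sin\varphi$-twisted spatial error does not cancel the temporal coercivity. This works precisely because $\alpha$ and $\beta$ involve no temporal derivatives, so their Lipschitz bound lands entirely on the spatial norm $\|\nabla w\|_{L^2}^2$ and leaves the positive contribution $\sin\varphi\,\|\partial^{1/2}_+w\|_{L^2}^2$ coming from \cref{lemma:timeder} intact; this clean separation of the temporal and spatial mechanisms is what forces the choice of a small but strictly positive $\varphi$.
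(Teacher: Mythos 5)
Your proposal is correct and follows essentially the same route as the paper's proof: the identical splitting into temporal and spatial parts, the same collapse of the temporal term to $\sin\varphi\,\|\partial^{1/2}_+(u-v)\|^2_{L^2(\Omega_i\times\R)}$ via the tensorized identities of \cref{lemma:timeder}, the same $\bigl(c\cos\varphi - C\sin\varphi\bigr)\|\nabla(u-v)\|^2$ balance for the spatial part, and the same $\varphi=0$ specialization. The only cosmetic difference is the final solvability step, where you invoke Browder--Minty while the paper applies Zarantello's theorem \cite[Theorem 25.B]{zeidler2b}; both apply equally well to a Lipschitz continuous, uniformly monotone operator.
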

\begin{proof}
We will only consider the case with the bilinear form~$a_i$, as the same proof holds for~$a$. First, note that~\cref{ass:eq} yields the bound 
\begin{displaymath}
    \|\alpha(u, \nabla u)-\alpha(v, \nabla v)\|_{L^2(\Omega_i\times\R)^d}
    \leq C\bigl(\|\nabla (u-v)\|_{L^2(\Omega_i\times\R)^d}+\|u-v\|_{L^2(\Omega_i\times\R)}\bigr)
\end{displaymath}
for every $u,v\in L^2(\R)\tensor H^1(\Omega_i)$. Hence, the Lipschitz continuity of $a_i$ on $W_i$ follows directly by the Cauchy--Schwarz inequality together with the boundedness of~$\nabla$ and $\partial^{1/2}_\pm$.

Before we adress the monotonicity bounds we first make a few observations. First, the identities in~\cref{lemma:timeder} can trivially be validated on $H^{1/2}(\R)\otimes L^2(\Omega_i)$ and extended to $H^{1/2}(\R)\tensor L^2(\Omega_i)$ by density and the boundedness of the related operators. That is, 
\begin{equation}\label{eq:exttime}
\partial^{1/2}_+ v = -\partial^{1/2}_-\Hc_i v\quad\text{and}\quad(\partial^{1/2}_+ v\, ,\, \partial^{1/2}_- v)_{L^2(\Omega_i\times\R)}=0
\end{equation} 
for every $v\in H^{1/2}(\R)\tensor L^2(\Omega_i)$. Second, as~\cref{ass:domains} is valid, a similar density argument together with the observations regarding the equivalent norms in~\cref{sec:prel} yields the ``extended'' Poincaré's inequality
\begin{displaymath}
\|v\|_{L^2(\Omega_i\times\R)}\leq C_p\|\nabla v\|_{L^2(\Omega_i\times \R)^d}\quad\text{for all }v\in L^2(\R)\tensor V_i,
\end{displaymath}
and the fact that 
\begin{equation}\label{eq:eqivnorms}
v\mapsto\|\nabla v\|_{L^2(\Omega_i\times\R)^d}\quad\text{and}\quad v\mapsto\bigl(\|\partial^{1/2}_+v\|^2_{L^2(\Omega_i\times\R)}+\|\nabla v\|^2_{L^2(\Omega_i\times\R)^d}\bigr)^{1/2}
\end{equation}
are (equivalent) norms on $L^2(\R)\tensor V_i$ and $W_i$, respectively. Third, by~\cref{lemma:abstracthilbert}, $\nabla\Hc_i$ is a bounded operator from $L^2(\R)\tensor H^1(\Omega_i)$ into $L^2(\Omega_i\times\R)^d$. 

In order to prove the monotonicity bounds we first consider the temporal term. By~\cref{eq:exttime} we have the equality
\begin{equation}\label{eq:tmon}
\begin{aligned}
    \bigl(&\partial^{1/2}_+ u\, ,\, \partial^{1/2}_-\Hc_i^\varphi (u-v)\bigr)_{L^2(\Omega_i\times\R)}-\bigl(\partial^{1/2}_+ v\, ,\, \partial^{1/2}_- \Hc_i^\varphi(u-v)\bigr)_{L^2(\Omega_i\times\R)}\\
    &=\bigl(\partial^{1/2}_+ (u-v)\, ,\, \partial^{1/2}_-\Hc_i^\varphi (u-v)\bigr)_{L^2(\Omega_i\times\R)}\\
    &=\cos(\varphi)\bigl(\partial^{1/2}_+ (u-v)\, ,\, \partial^{1/2}_- (u-v)\bigr)_{L^2(\Omega_i\times\R)}+\sin(\varphi)\bigl(\partial^{1/2}_+ (u-v)\, ,\, -\partial^{1/2}_-\Hc_i (u-v)\bigr)_{L^2(\Omega_i\times\R)}\\
    &=\sin(\varphi)\|\partial^{1/2}_+(u-v)\|^2_{L^2(\Omega_i\times\R)}
 \end{aligned}
\end{equation}
for every $u,v\in H^{1/2}(\R)\tensor L^2(\Omega_i)$.

Next, we consider the monotonicity bound for the spatial terms. Via~\cref{ass:eq} and the Poincaré's inequality we obtain that the Nemytskii operators $\alpha, \beta$ satisfy the monotonicity bound
\begin{align*}
    \int_\R\int_{\Omega_i}&\bigl(\alpha(u, \nabla u)-\alpha(v, \nabla v)\bigr)\cdot\nabla (u-v)
        +\bigl(\beta(u, \nabla u)-\beta(v, \nabla v)\bigr)(u-v)\,\mathrm{d}x\,\mathrm{d}t\\
        &\geq {\inf}_x h_2\, \|\nabla(u-v)\|^2_{L^2(\Omega_i\times\R)} - {\sup}_x h_3\,\|u-v\|^2_{L^2(\Omega_i\times\R)}\geq c \|\nabla(u-v)\|^2_{L^2(\Omega_i\times\R)}
\end{align*}
for every $u,v\in L^2(\R)\tensor V_i$. 

Making use of the Lipschitz continuity of $\alpha, \beta$ and the previous monotonicity bound gives us the inequality
\begin{equation}\label{eq:smon}
\begin{aligned}
        \int_\R&\int_{\Omega_i}\bigl(\alpha(u, \nabla u)-\alpha(v, \nabla v)\bigr)\cdot\nabla \Hc_i^\varphi(u-v)
         +\bigl(\beta(u, \nabla u)-\beta(v, \nabla v)\bigr)\Hc_i^\varphi(u-v)\,\mathrm{d}x\,\mathrm{d}t\\
        &=\cos(\varphi)\int_\R\int_{\Omega_i}\bigl(\alpha(u, \nabla u)-\alpha(v, \nabla v)\bigr)\cdot\nabla (u-v)
           +\bigl(\beta(u, \nabla u)-\beta(v, \nabla v)\bigr)(u-v)\,\mathrm{d}x\,\mathrm{d}t\\
        &-\sin(\varphi)\int_\R\int_{\Omega_i}\bigl(\alpha(u, \nabla u)-\alpha(v, \nabla v)\bigr)\cdot\nabla \Hc_i(u-v)
        +\bigl(\beta(u, \nabla u)-\beta(v, \nabla v)\bigr)\Hc_i(u-v\bigr)\,\mathrm{d}x\,\mathrm{d}t\\
        &\geq c\cos(\varphi)\|\nabla(u-v)\|^2_{L^2(\Omega_i\times\R)}-C|\sin(\varphi)|\|\nabla(u-v)\|^2_{L^2(\Omega_i\times\R)}\\
        &= \bigl(c\cos(\varphi)-C|\sin(\varphi)|\bigr)\|\nabla(u-v)\|^2_{L^2(\Omega_i\times\R)}
\end{aligned}
\end{equation}
for every $u,v\in L^2(\R)\tensor V_i$.

Summing the bounds~\cref{eq:tmon,eq:smon} and choosing $\varphi>0$ small enough yields
\begin{align*} 
        a_i\bigl(&u, \Hc_i^\varphi(u-v)\bigr)-a_i\bigl(v, \Hc_i^\varphi(u-v)\bigr)\\
                 &\geq  \sin(\varphi)\|\partial^{1/2}_+(u-v)\|^2_{L^2(\R)\tensor L^2(\Omega_i)}
                    +\bigl(c\cos(\varphi)-C\sin(\varphi)\bigr)\|\nabla(u-v)\|^2_{L^2(\Omega_i\times\R)}\\
                &\geq c\|u-v\|_{W_i}^2,
\end{align*}
for every $u,v\in W_i$. This proves that $a_i(\cdot, \Hc_i^\varphi\cdot)$ is uniformly monotone. Similarly, choosing $\varphi=0$ yields the uniform monotonicity in $L^2(\R)\tensor H^1(\Omega_i)$, i.e.,
\begin{align*} 
        a_i(u, u-v)-a_i(v, u-v)\geq c\|\nabla(u-v)\|^2_{L^2(\Omega_i\times\R)}\geq c\|u-v\|_{L^2(\R)\tensor V_i}^2=c\|u-v\|_{L^2(\R)\tensor H^1(\Omega_i)}^2
\end{align*}
for every $u,v\in W_i$. The fact that~\cref{eq:weakai} has a unique solution now follows from~\cite[Theorem 25.B]{zeidler2b} applied to the operator $A_i:W_i\rightarrow W_i^*:u\mapsto a_i(u, \Hc_i^\varphi\cdot)$.
\end{proof}

Before moving on to the weak form of the transmission problem it is necessary to prove that we can ``glue'' together functions in our $H^{1/2}$-framework. This is the purpose of the following lemma.

\begin{lemma}\label{lemma:cutpaste}
Suppose that \cref{ass:domains} holds. Let $(v_1,v_2)\in W_1\times W_2$ and define $v=\{v_i\textrm{ on }\Omega_i\times\R,\, i=1,2\}$. If $T_1v_1=T_2v_2$ then $v\in W$. Conversely, let $v\in W$ and define $v_i=\restr{v}{\Omega_i\times\R}$. Then $T_iv_i\in Z$ and $T_1v_1=T_2v_2$.
\end{lemma}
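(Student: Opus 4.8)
The plan is to establish membership in $W=H^{1/2}(\R)\tensor L^2(\Omega)\cap L^2(\R)\tensor V$ by verifying its two defining factors separately, and then to treat the converse by restriction together with a density argument.

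For the temporal factor I would exploit the orthogonal decomposition $L^2(\Omega)=\iota_1L^2(\Omega_1)\oplus\iota_2L^2(\Omega_2)$, where $\iota_i$ denotes extension by zero from $\Omega_i$ to $\Omega$; this splitting is orthogonal because the $\Omega_i$ are disjoint and cover $\Omega$ up to the null set $\Gamma$, see~\cref{eq:domain}. By~\cref{lemma:tensor} each operator $I\tensor\iota_i:H^{1/2}(\R)\tensor L^2(\Omega_i)\to H^{1/2}(\R)\tensor L^2(\Omega)$ is bounded, and since $v=(I\tensor\iota_1)v_1+(I\tensor\iota_2)v_2$ we obtain at once that $v\in H^{1/2}(\R)\tensor L^2(\Omega)$, and in particular $v\in L^2(\Omega\times\R)$. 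This argument uses no trace hypothesis and is essentially symmetric in the two directions of the lemma.

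The spatial factor is the crux. Here I would show $v\in L^2(\R)\tensor V$ using the full-domain analogue of~\cref{eq:Wi0}, namely $L^2(\R)\tensor V=\{v\in L^2(\R)\tensor H^1(\Omega):T_{\partial\Omega}v=0\}$, proved exactly as in~\cref{lemma:identities}. The main step is to show that $v$ has a weak spatial gradient in $L^2(\Omega\times\R)^d$, equal to the piecewise gradient of the $v_i$. To this end I would first record the Green's formula
\begin{equation*}
\int_{\Omega_i\times\R}v_i\,\nabla\cdot\psi\,\mathrm{d}x\,\mathrm{d}t=-\int_{\Omega_i\times\R}\nabla v_i\cdot\psi\,\mathrm{d}x\,\mathrm{d}t+\int_{\partial\Omega_i\times\R}(T_{\partial\Omega_i}v_i)(\psi\cdot\nu_i)\,\mathrm{d}S\,\mathrm{d}t
\end{equation*}
for $v_i\in L^2(\R)\tensor H^1(\Omega_i)$ and $\psi\in C_0^\infty(\Omega\times\R)^d$, which follows by validating it on the dense set of elementary tensors $g\otimes y$, $g\in C_0^\infty(\R)$, $y\in C^\infty(\overline{\Omega}_i)$, and extending by the continuity of $T_{\partial\Omega_i}$ and $\nabla$. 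Summing over $i=1,2$, the contributions over $(\partial\Omega_i\setminus\Gamma)\times\R$ vanish because $\psi$ has compact support in $\Omega\times\R$, while the two interface contributions over $\Gamma\times\R$ cancel precisely because $\nu_2=-\nu_1$ there together with the hypothesis $T_1v_1=T_2v_2$. This yields the weak-gradient identity, hence $v\in L^2(\R)\tensor H^1(\Omega)$; the homogeneous boundary trace $T_{\partial\Omega}v=0$ then follows from $v_i\in L^2(\R)\tensor V_i$ via~\cref{eq:Wi}, since $\partial\Omega$ is covered by the sets $\partial\Omega_i\setminus\Gamma$ up to a null set. Combined with the temporal factor this gives $v\in W$. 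I expect this interface cancellation---making the Green's formula and the matching of one-sided traces on $\Gamma\times\R$ rigorous in the tensor setting---to be the main obstacle.

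For the converse, given $v\in W$ I would note that restriction to $\Omega_i\times\R$ is bounded on each factor, from $L^2(\R)\tensor H^1(\Omega)$ into $L^2(\R)\tensor H^1(\Omega_i)$ and from $H^{1/2}(\R)\tensor L^2(\Omega)$ into $H^{1/2}(\R)\tensor L^2(\Omega_i)$, while the defining boundary condition of $V_i$ on $\partial\Omega_i\setminus\Gamma\subset\partial\Omega$ is inherited from $v\in L^2(\R)\tensor V$; hence $v_i\in W_i$, and $T_iv_i\in Z$ is then immediate from~\cref{lemma:tracei}. Finally, to see $T_1v_1=T_2v_2$ I would approximate $v$ in $L^2(\R)\tensor H^1(\Omega)$ by elementary tensors from $C_0^\infty(\R)\otimes C^\infty(\overline{\Omega})$, for which the two one-sided traces along $\Gamma\times\R$ coincide trivially, and pass to the limit using the continuity of the restrictions $v\mapsto v_i$ and of the trace operators $T_i$.
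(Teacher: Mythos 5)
Your proof is correct in substance, but it follows a genuinely different route from the paper's. The paper never touches Green's formulas or normal vectors: it expands $v_i=\sum_k x_k\otimes(z_i)_k$ in orthonormal bases via~\cref{eq:uiorth}, uses orthonormality of $\{x_k\}$ to transfer the hypothesis $T_1v_1=T_2v_2$ to the coefficient level $\hat{T}_1(z_1)_k=\hat{T}_2(z_2)_k$, invokes the purely elliptic gluing and trace-matching results of~\cite{EHEE22} (Lemmas 4.5 and 4.6 there) for each coefficient pair to get $z_k\in V$, and re-sums by a Cauchy-sequence argument; the $H^{1/2}(\R)\tensor L^2(\Omega)$-regularity and the converse direction are handled by the same basis device. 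You instead argue directly in space-time: extension by zero and the direct-sum structure of $L^2(\Omega)$ give the temporal factor (this is arguably cleaner than the paper's basis expansion for that factor), the spatial factor comes from the classical ``piecewise $H^1$ with matching traces is $H^1$'' argument via a Green's identity with cancellation of the two interface terms, and the converse trace equality comes from density of $C_0^\infty(\R)\otimes C^\infty(\overline{\Omega})$ in $L^2(\R)\tensor H^1(\Omega)$. Both routes work; what the paper's buys is that all Lipschitz-domain technicalities are delegated to the already-proved elliptic lemmas of~\cite{EHEE22}, whereas your route must justify, in the tensor setting, (i) the Green's identity on a Lipschitz subdomain with $L^\infty$ normal (fine, by the density and continuity you cite), (ii) the Fubini-type step from ``distributional spatial gradient in $L^2(\Omega\times\R)^d$'' to $v\in L^2(\R,H^1(\Omega))\cong L^2(\R)\tensor H^1(\Omega)$, which you assert without comment, and (iii) trace locality (the trace of a restriction equals the restriction of the trace), used both for $T_{\partial\Omega}v=0$ and for inheriting the $V_i$-boundary condition in the converse. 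Note also that the paper's $T_i$ is only defined on $L^2(\R)\tensor V_i$, so your limiting argument in the converse should be phrased with $T_{\partial\Omega_i}$ followed by restriction to $\Gamma\times\R$, since your smooth approximants need not vanish on $(\partial\Omega_i\setminus\Gamma)\times\R$. These points are all standard and fixable, so your proof stands as a more elementary, self-contained alternative to the paper's reduction to the elliptic case.
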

\begin{proof}
Assume that $v_i\in W_i$, $i=1,2$, and $T_1v_1=T_2v_2$. Then 
\begin{displaymath}
v=\{v_i\textrm{ on }\Omega_i\times\R,\, i=1,2\}\in L^2(\Omega\times\R).
\end{displaymath}
In order to prove the $L^2(\R)\tensor V$-regularity of $v$, let $\{x_k\}_{k\geq 0}$ be an orthonormal basis of $L^2(\R)$ and $\{(y_i)_\ell\}_{\ell\geq 0}$ be orthonormal bases of $V_i$, $i=1,2$. The corresponding elements $\{(z_i)_k\}_{k\geq 0}\subset V_i$ are defined as in~\cref{eq:uiorth}. This yields the representation
\begin{displaymath}
v_i=\sum_{k=0}^\infty x_k \otimes (z_i)_k
\end{displaymath}
and the equality 
\begin{displaymath}
 0=(T_1v_1-T_2v_2\, ,\,x_k\otimes (\hat{T}(z_1)_k-\hat{T}_2(z_2)_k)_{L^2(\R)\tensor L^2(\Gamma)}=\|\hat{T}_1(z_1)_k-\hat{T}_2(z_2)_k\|^2_{ L^2(\Gamma)}.
\end{displaymath}
That is,  $\hat{T}_1(z_1)_k=\hat{T}_2(z_2)_k$, for all $k=0,1,\ldots$, and from \cite[Lemma 4.6]{EHEE22} it follows that $z_k=\{(z_i)_k \textrm{ on }\Omega_i\times\R,\, i=1,2\}\in V$. We also have the identification
\begin{displaymath}
v=\sum_{k=0}^\infty x_k \otimes z_k\quad\text{in }L^2(\Omega\times\R).
\end{displaymath}
As $\|z_k\|^2_{V}=\|(z_1)_k\|^2_{V_1}+\|(z_2)_k\|^2_{V_2}$, one has that $\{\sum_{k=0}^n x_k \otimes z_k\}_{n\geq0}$ is a Cauchy sequence in $L^2(\R)\tensor V$. Hence, $v$ is also an element in $L^2(\R)\tensor V$. 

The $H^{1/2}(\R)\tensor L^2(\Omega)$-regularity of $v$ follows in a similar fashion, by expanding the $v_i$ elements in terms of an orthonormal basis $\{x_k\}_{k\geq 0}$ of $H^{1/2}(\R)$ and  an orthonormal basis $\{(y_i)_k\}_{k\geq 0}$ of $L^2(\Omega_i)$ together with the observation that $\|z_k\|^2_{L^2(\Omega)}=\|(z_1)_k\|^2_{L^2(\Omega_1)}+\|(z_2)_k\|^2_{L^2(\Omega_2)}$. In conclusion, $v\in W$.

Conversely, let $v\in W$. From~\cref{eq:Wi} we get that $v_i=\restr{v}{\Omega_i\times\R}$ is an element in $W_i$ and, by~\cref{lemma:tracei}, $T_iv_i\in Z$. Let $\{x_k\}_{k\geq 0}$ be an orthonormal basis of $L^2(\R)$ and $\{y_\ell\}_{\ell\geq 0}$ an orthonormal basis of $V$. The related elements $\{z_k\}_{k\geq 0}\subset V$ are given by~\cref{eq:uiorth} and we obtain
\begin{displaymath}
v=\sum_{k=0}^\infty x_k \otimes z_k\quad\text{and}\quad v_i=\sum_{k=0}^\infty x_k \otimes (z_i)_k,
\end{displaymath}
where $(z_i)_k=\restr{z_k}{\Omega_i\times\R}\in V_i$. By~\cite[Lemma 4.5]{EHEE22} it follows that $\hat{T}_1(z_1)_k=\hat{T}_2(z_2)_k$ for every $k=0,1,\ldots$ Therefore
\begin{displaymath}
T_1v_1=\sum_{k,\ell=0}^\infty x_k \otimes \hat{T}_1(z_1)_k=\sum_{k,\ell=0}^\infty x_k \otimes \hat{T}_2(z_2)_k=T_2v_2,
\end{displaymath}
and the sought after equality is obtained. 
\end{proof}
The weak transmission problem is to find $(u_1, u_2)\in W_1\times W_2$ such that
\begin{equation}\label{eq:weaktran}
	\left\{\begin{aligned}
	     a_i(u_i, v_i)&=(f_i, v_i)_{L^2(\Omega_i\times\R)} & & \text{for all } v_i\in W_i^0,\, i=1,2,\\
	     T_1u_1&=T_2u_2, & &\\
	     \textstyle\sum_{i=1}^2 a_i(& u_i, R_i\mu)-(f_i, R_i\mu)_{L^2(\Omega_i\times\R)}=0 & &\text{for all }\mu\in Z. 
	\end{aligned}\right.
\end{equation}
\begin{lemma}\label{lemma:tranequiv}
Suppose that \cref{ass:domains,ass:eq,ass:f} hold. Then the weak equation is equivalent to the weak transmission problem in the following way: If $u$ solves \cref{eq:weak} then $(u_1, u_2)=(\restr{u}{ \Omega_1\times\R}, \restr{u}{ \Omega_2\times\R})$ solves \cref{eq:weaktran}. Conversely, if $(u_1, u_2)$ solves \cref{eq:weaktran} then $u=\{u_i \textrm{ on } \Omega_i\times\R\, , i=1,2\}$ solves \cref{eq:weak}. In particular, there exists a unique solution to \cref{eq:weaktran}.
\end{lemma}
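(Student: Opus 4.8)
The plan is to prove the equivalence in both directions, using the "gluing" lemma (\cref{lemma:cutpaste}) as the central technical tool and the uniqueness results from \cref{lemma:ai} to deduce existence and uniqueness for the transmission problem. The key observation is that the test space $W$ decomposes naturally: any $v \in W$ restricts to a pair $(v_1, v_2) \in W_1 \times W_2$ with matching traces, and conversely any such trace-matched pair glues to an element of $W$. Moreover, the right inverse $R_i : Z \to W_i$ from \cref{lemma:tracei} lets us lift interface data into the subdomain spaces, which is what the third line of \cref{eq:weaktran} encodes.

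First I would prove the forward direction. Suppose $u \in W$ solves \cref{eq:weak} and set $u_i = \restr{u}{\Omega_i \times \R}$. By \cref{lemma:cutpaste} (converse part), each $u_i \in W_i$ and $T_1 u_1 = T_2 u_2$, so the second line of \cref{eq:weaktran} holds. For the first line, take any $v_i \in W_i^0$ and extend it by zero to $\tilde v_i \in W$; since $v_i$ has vanishing trace on all of $\partial\Omega_i$ (in particular on $\Gamma$), \cref{lemma:cutpaste} guarantees $\tilde v_i \in W$. Testing \cref{eq:weak} against $\tilde v_i$ and observing that the integrals localize to $\Omega_i \times \R$ (the extension is zero off $\Omega_i$) yields $a_i(u_i, v_i) = (f_i, v_i)_{L^2(\Omega_i \times \R)}$. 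For the third line, given $\mu \in Z$, the function $R_i \mu \in W_i$ need not vanish on $\Gamma$, so I cannot simply extend one piece by zero. Instead I would glue: let $v = \{(R_i\mu) \text{ on } \Omega_i \times \R\}$, noting $T_1 R_1 \mu = \mu = T_2 R_2 \mu$, so by \cref{lemma:cutpaste} $v \in W$. Testing \cref{eq:weak} against this $v$, the left side splits as $\sum_i a_i(u_i, R_i\mu)$ and the right as $\sum_i (f_i, R_i\mu)_{L^2(\Omega_i \times \R)}$, giving exactly the third equation.

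For the converse, suppose $(u_1, u_2)$ solves \cref{eq:weaktran}. The trace-matching condition $T_1 u_1 = T_2 u_2$ together with \cref{lemma:cutpaste} yields $u = \{u_i \text{ on } \Omega_i \times \R\} \in W$. I must verify $a(u, v) = (f, v)_{L^2(\Omega \times \R)}$ for every $v \in W$. The strategy is a decomposition of the test function: given $v \in W$, write $v = v^0 + R(Tv)$ in a suitable sense, where $R(Tv)$ is an interface lift and $v^0$ has vanishing trace. Concretely, set $\mu = T_1 v_1 = T_2 v_2 \in Z$ (using the converse of \cref{lemma:cutpaste}) and define $w_i = v_i - R_i \mu \in W_i$; since $T_i w_i = T_i v_i - \mu = 0$, \cref{eq:Wi0} gives $w_i \in W_i^0$. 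Then $a(u,v) = \sum_i a_i(u_i, v_i) = \sum_i \bigl[a_i(u_i, w_i) + a_i(u_i, R_i\mu)\bigr]$. The first line of \cref{eq:weaktran} handles the $w_i$ terms, and the third line handles the $R_i\mu$ terms, and reassembling the right-hand sides recovers $(f, v)_{L^2(\Omega \times \R)}$. Finally, existence and uniqueness for \cref{eq:weaktran} follow immediately: \cref{lemma:ai} gives a unique solution $u$ to \cref{eq:weak}, and the equivalence just established transfers existence and uniqueness to the transmission problem.

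\textbf{The main obstacle} I anticipate is the careful bookkeeping in the converse direction, specifically ensuring that the splitting $v_i = w_i + R_i\mu$ is legitimate in $W_i$ and that $w_i$ genuinely lands in $W_i^0$ rather than merely $W_i$. This rests on the identity \cref{eq:Wi0} characterizing $W_i^0$ as the kernel of the trace, combined with the fact that $R_i$ is a genuine right inverse ($T_i R_i = I_Z$), both of which are available. A secondary subtlety is justifying that the bilinear form $a$ on the glued function $u$ genuinely splits as $\sum_i a_i(u_i, v_i)$; this follows because the integrands in $a$ and $a_i$ are identical and the spatial integral over $\Omega$ decomposes as the sum of integrals over $\Omega_1$ and $\Omega_2$ (the interface $\Gamma$ having measure zero), but I would state this additivity explicitly since the fractional time-derivative term $\partial^{1/2}_+ u\, \partial^{1/2}_- v$ must also respect this decomposition — which it does, since half-derivatives act only in time and the spatial domain integral is additive.
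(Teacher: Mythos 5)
Your proposal is correct and takes essentially the same route as the paper: the paper's proof consists of invoking the standard elliptic equivalence argument of Quarteroni--Valli (their Lemma 1.2.1) adapted via the gluing result \cref{lemma:cutpaste}, and your write-up is exactly that argument carried out in detail --- restriction and gluing of test functions, the splitting $v_i = w_i + R_i\mu$ with $w_i \in W_i^0$ justified by \cref{eq:Wi0}, and transfer of existence and uniqueness from \cref{eq:weak} via \cref{lemma:ai}. Your attention to the localization of the fractional time derivative (it acts pointwise in $x$, so the integrals genuinely split over $\Omega_1$ and $\Omega_2$) is a detail the paper leaves implicit but is handled correctly here.
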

\begin{remark}
\emph{The proof of \cref{lemma:tranequiv} follows by the same argument as~\cite[Lemma 1.2.1]{quarteroni} and requires that \cref{lemma:cutpaste} holds. This is one of the reason that the analysis is performed in the $H^{1/2}$-setting. As already stated in the introduction, analogous results to \cref{lemma:cutpaste} are not always true; see for instance~\cite[Example 2.14]{costabel90} for a counterexample in $H^1(\R)\tensor H^{-1}(\Omega)\cap L^2(\R)\tensor H^1(\Omega)$.}
\end{remark}

\section{Nonlinear time-dependent Steklov--Poincar\'e operators}\label{sec:SP}
The goal is now to reformulate the transmission problem to a problem on the interface $\Gamma\times\R$. In order to do so, one is required to consider non-homogeneous boundary values on the interface.
\begin{lemma}\label{lemma:Fi}
Let \cref{ass:domains,ass:eq,ass:f} be valid. For any $\eta\in Z$ there exists a unique $u_i\in W_i$ such that $T_iu_i=\eta$ and
\begin{displaymath}
a_i(u_i, v_i)=(f_i, v_i)_{L^2(\Omega_i\times\R)}\quad\text{for all }v_i\in W_i^0.
\end{displaymath}
\end{lemma}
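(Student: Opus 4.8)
The plan is to reduce the non-homogeneous boundary value problem to a homogeneous one using the bounded right inverse $R_i:Z\rightarrow W_i$ from \cref{lemma:tracei}, and then to solve the resulting homogeneous problem by invoking the well-posedness result already established in \cref{lemma:ai}. Concretely, given $\eta\in Z$, I would write the sought solution as $u_i=w_i+R_i\eta$, where the unknown $w_i$ is required to lie in $W_i^0$. Since $R_i$ is a right inverse of the trace, $T_i(R_i\eta)=\eta$; and since every element of $W_i^0$ has zero trace (by \cref{eq:Wi0}), the condition $T_iu_i=\eta$ is automatically satisfied for any choice of $w_i\in W_i^0$. Thus the trace constraint is handled at the outset, and the whole problem collapses to finding a suitable $w_i$.

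\textbf{Key steps.}

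First I would substitute $u_i=w_i+R_i\eta$ into the weak equation $a_i(u_i,v_i)=(f_i,v_i)_{L^2(\Omega_i\times\R)}$ for $v_i\in W_i^0$. This turns the problem into: find $w_i\in W_i^0$ such that
\begin{displaymath}
a_i(w_i+R_i\eta,\,v_i)=(f_i,v_i)_{L^2(\Omega_i\times\R)}\quad\text{for all }v_i\in W_i^0.
\end{displaymath}
Second, I would define a shifted form $b_i:W_i^0\times W_i^0\rightarrow\R$ by $b_i(w,v)=a_i(w+R_i\eta,\,v)$ and verify that it inherits the structural properties needed to apply the abstract existence theorem. The point is that shifting the first argument by the fixed element $R_i\eta$ preserves Lipschitz continuity (the difference $b_i(w,v)-b_i(w',v)=a_i(w+R_i\eta,v)-a_i(w'+R_i\eta,v)$ is controlled exactly as in \cref{lemma:ai}, since the shift cancels) and likewise preserves uniform monotonicity of $b_i(\cdot,\Hc_i^\varphi\cdot)$ after composing with the isomorphism $\Hc_i^\varphi$, because the monotonicity estimate in \cref{lemma:ai} depends only on the \emph{difference} of the two arguments, into which the constant shift does not enter. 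Third, I would apply the same tool as in \cref{lemma:ai} — namely \cite[Theorem 25.B]{zeidler2b} on the operator $w\mapsto b_i(w,\Hc_i^\varphi\cdot)$, which is Lipschitz continuous and uniformly monotone on $W_i^0$ — to obtain a unique $w_i\in W_i^0$. Here one must note that $\Hc_i^\varphi$ restricts to an isomorphism of $W_i^0$, as recorded at the end of \cref{sec:tensor}. Setting $u_i=w_i+R_i\eta$ then gives existence; uniqueness of $u_i$ follows from uniqueness of $w_i$ together with the fact that any two solutions with the same trace $\eta$ differ by an element of $W_i^0$, reducing the difference to the homogeneous problem.

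\textbf{Main obstacle.}

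The delicate point is not the existence argument itself, which is a routine lift-and-solve once the right inverse is in hand, but rather justifying that the test space in the variational identity is genuinely $W_i^0$ while the solution lives in the larger space $W_i$, and that composing with $\Hc_i^\varphi$ maps $W_i^0$ to $W_i^0$ rather than merely to $W_i$. I would therefore make explicit that $\Hc_i^\varphi:W_i^0\rightarrow W_i^0$ is an isomorphism so that the transformed test functions remain admissible, and confirm that the monotonicity constant obtained in \cref{lemma:ai} is uniform over the subspace $W_i^0\subset W_i$, which is immediate since $W_i^0$ is a closed subspace and the estimate holds on all of $W_i$. Once these bookkeeping points are settled, the existence and uniqueness of $u_i$ follow directly from the abstract theorem.
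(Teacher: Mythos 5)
Your proposal is correct and takes essentially the same route as the paper's proof: lift the boundary data by writing $u_i=w_i+R_i\eta$ with $R_i$ from \cref{lemma:tracei}, show the shifted form composed with $\Hc_i^\varphi$ is Lipschitz continuous and uniformly monotone on $W_i^0$ (the shift cancels in differences), apply \cite[Theorem 25.B]{zeidler2b}, and undo the shift using that $\Hc_i^\varphi:W_i^0\rightarrow W_i^0$ is an isomorphism. The only cosmetic difference is that the paper builds $\Hc_i^\varphi$ directly into the definition of $b_i$, whereas you compose with it afterwards, and your explicit uniqueness step (two solutions with the same trace differ by an element of $W_i^0$, by \cref{eq:Wi0}) just spells out what the paper leaves implicit.
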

\begin{proof}
Consider the shifted form $b_i: W_i^0\times W_i^0\rightarrow \R$ defined as
\begin{displaymath}
    b_i(u, v)=a_i(u+R_i\eta, \Hc_i^\varphi v).
\end{displaymath}
By the Lipschitz continuity of $a_i$ in~\cref{lemma:ai} we have that
\begin{align*}
    \bigl|b_i&(u, w)-b_i(v, w)\bigr|=\bigl|a_i(u+R_i\eta, \Hc_i^\varphi w)-a_i(v+R_i\eta, \Hc_i^\varphi w)\bigr|\\
    &\leq C\|(u+R_i\eta)-(v+R_i\eta)\|_{W_i}\|\Hc_i^\varphi w\|_{W_i}\leq C\|u-v\|_{W_i}\|w\|_{W_i},
\end{align*}
which shows that $b_i$ is also Lipschitz continuous. By the uniform monotonicity of $a_i(\cdot, \Hc_i^\varphi\cdot)$ in~\cref{lemma:ai} we have that
\begin{align*}
    b_i&(u, u-v)-b_i(v, u-v)=a_i\bigl(u+R_i\eta, \Hc_i^\varphi(u-v)\bigr)-a_i\bigl(v+R_i\eta, \Hc_i^\varphi (u-v)\bigr)\\
    &=a_i\bigl(u+R_i\eta, \Hc_i^\varphi(u+R_i\eta)-\Hc_i^\varphi(v+R_i\eta)\bigr)-a_i\bigl(v+R_i\eta, \Hc_i^\varphi (u+R_i\eta)-\Hc_i^\varphi(v+R_i\eta)\bigr)\\
    &\geq c\|(u+R_i\eta)-(v+R_i\eta)\|_{W_i}^2=c\|u-v\|_{W_i}^2,
\end{align*}
which shows that $b_i$ is uniformly monotone. Therefore, by~\cite[Theorem 25.B]{zeidler2b}, there exists a unique solution $u^0_i\in W_i^0$ to the problem
\begin{displaymath}
    b_i(u^0_i, v_i)=(f_i, \Hc_i^\varphi v_i)_{L^2(\Omega_i\times\R)}\quad\text{for all }v_i\in W_i^0.
\end{displaymath}
Defining $u_i=u^0_i+R_i\eta$ we have from~\cref{eq:Wi0} that $T_iu_i=\eta$. Moreover, since $\Hc_i^\varphi:W_i^0\rightarrow W_i^0$ is an isomorphism,
\begin{displaymath}
a_i(u_i, v_i)=(f_i,v_i)_{L^2(\Omega_i\times\R)}
\end{displaymath}
for all $v_i\in W_i^0$.
\end{proof}
According to \cref{lemma:Fi} there exists a (nonlinear) operator $F_i:Z\rightarrow W_i:\eta\mapsto u_i$ such that
\begin{equation}\label{eq:Fi}
	    a(F_i\eta, v_i)=(f_i,v_i)_{L^2(\Omega_i\times\R)}\quad \textrm{ for all }v_i\in W_i^0,
\end{equation}
with a (linear) left inverse $T_i$, i.e., $T_iF_i\eta=\eta$ for $\eta\in Z$. 
\begin{lemma}\label{lemma:Fi_lip}
Let \cref{ass:domains,ass:eq,ass:f} be valid. The operator $F_i:Z\rightarrow W_i$ is then Lipschitz continuous.
\end{lemma}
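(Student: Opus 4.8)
The plan is to show Lipschitz continuity of $F_i$ by exploiting the uniform monotonicity and Lipschitz continuity of the form $a_i$, together with the fact that $R_i$ is a bounded right inverse of $T_i$. Let $\eta,\zeta\in Z$ and write $u_i=F_i\eta$, $w_i=F_i\zeta$, so that $T_iu_i=\eta$ and $T_iw_i=\zeta$ and both satisfy the defining relation~\cref{eq:Fi} with test functions in $W_i^0$. The key observation is that the difference $u_i-w_i$ does not lie in $W_i^0$, but its trace is $T_i(u_i-w_i)=\eta-\zeta$, so $u_i-w_i-R_i(\eta-\zeta)$ does lie in $W_i^0$ by~\cref{eq:Wi0}. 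This suggests splitting the difference into a part living on the interface (captured by $R_i(\eta-\zeta)$) and an interior part that can be tested against.

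First I would apply uniform monotonicity of $a_i(\cdot,\Hc_i^\varphi\cdot)$ from~\cref{lemma:ai} to the pair $u_i,w_i$, obtaining
\begin{displaymath}
c\|u_i-w_i\|_{W_i}^2\leq a_i\bigl(u_i,\Hc_i^\varphi(u_i-w_i)\bigr)-a_i\bigl(w_i,\Hc_i^\varphi(u_i-w_i)\bigr).
\end{displaymath}
Next, using the isomorphism property of $\Hc_i^\varphi$ and~\cref{eq:hilbertphi}, I would decompose the test direction. Writing $u_i-w_i=\bigl(u_i-w_i-R_i(\eta-\zeta)\bigr)+R_i(\eta-\zeta)$, the isomorphism $\Hc_i^\varphi$ maps the first summand into $W_i^0$ (since $\Hc_i^\varphi:W_i^0\to W_i^0$), so I can apply the defining equation~\cref{eq:Fi} to cancel the contribution of $u_i$ and $w_i$ tested against that summand: the right-hand sides $(f_i,\cdot)$ agree and subtract to zero. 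Hence only the term tested against $\Hc_i^\varphi R_i(\eta-\zeta)$ survives, giving
\begin{displaymath}
c\|u_i-w_i\|_{W_i}^2\leq a_i\bigl(u_i,\Hc_i^\varphi R_i(\eta-\zeta)\bigr)-a_i\bigl(w_i,\Hc_i^\varphi R_i(\eta-\zeta)\bigr).
\end{displaymath}

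Finally I would bound the right-hand side using Lipschitz continuity of $a_i$ from~\cref{lemma:ai}, the boundedness of $\Hc_i^\varphi$ (from~\cref{lemma:hilbert}), and the boundedness of $R_i$ (from~\cref{lemma:tracei}):
\begin{displaymath}
c\|u_i-w_i\|_{W_i}^2\leq C\|u_i-w_i\|_{W_i}\,\|\Hc_i^\varphi R_i(\eta-\zeta)\|_{W_i}\leq C\|u_i-w_i\|_{W_i}\,\|\eta-\zeta\|_Z.
\end{displaymath}
Dividing by $\|u_i-w_i\|_{W_i}$ then yields $\|F_i\eta-F_i\zeta\|_{W_i}\leq C\|\eta-\zeta\|_Z$, the desired Lipschitz estimate. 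The main obstacle is the bookkeeping in the test-function decomposition: one must verify carefully that $\Hc_i^\varphi\bigl(u_i-w_i-R_i(\eta-\zeta)\bigr)$ is a legitimate test function in $W_i^0$ so that~\cref{eq:Fi} applies to it, which hinges on $\Hc_i^\varphi$ preserving $W_i^0$ and on $R_i$ being a right inverse with values in $W_i$ whose trace exactly matches $\eta-\zeta$. Everything else is a routine combination of the monotonicity and continuity estimates already established.
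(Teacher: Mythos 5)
Your proof is correct and follows essentially the same route as the paper's: uniform monotonicity of $a_i(\cdot,\Hc_i^\varphi\cdot)$, splitting the test direction into $\Hc_i^\varphi R_i(\eta-\zeta)$ plus a part in $W_i^0$ that is cancelled using the defining equation~\cref{eq:Fi}, then Lipschitz continuity of $a_i$ and boundedness of $\Hc_i^\varphi R_i$, and finally division by $\|F_i\eta-F_i\zeta\|_{W_i}$. The only cosmetic difference is that the paper checks the zero trace of the test correction directly via the commutation relation~\cref{eq:hilbertphi}, whereas you invoke the invariance of $W_i^0$ under $\Hc_i^\varphi$; both facts are established in~\cref{sec:tensor}, so the two arguments are interchangeable.
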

\begin{proof}
    Let $w_i=(\Hc_i^\varphi R_i\eta-\Hc_i^\varphi R_i\mu)-(\Hc_i^\varphi F_i\eta-\Hc_i^\varphi F_i\mu)$ and note that, by~\cref{eq:hilbertphi},
    \begin{align*}
        T_iw_i&=T_i(\Hc_i^\varphi R_i\eta-\Hc_i^\varphi R_i\mu)-T_i(\Hc_i^\varphi F_i\eta-\Hc_i^\varphi F_i\mu)\\
        &=(\Hc_\Gamma^\varphi \eta-\Hc_\Gamma^\varphi\mu)-(\Hc_\Gamma^\varphi\eta-\Hc_\Gamma^\varphi\mu)=0,
    \end{align*}
    which implies that $w_i\in W_i^0$ by~\cref{eq:Wi0}. Using~\cref{eq:Fi} together with~\cref{lemma:tracei,lemma:ai} we have that
    \begin{equation*}\label{eq:contbound1}
    \begin{aligned}
	        c\|F_i\eta-F_i\mu\|^2_{W_i}
	        &\leq a_i(F_i\eta, \Hc_i^\varphi F_i\eta-\Hc_i^\varphi F_i\mu)-a_i(F_i\mu, \Hc_i^\varphi F_i\eta-\Hc_i^\varphi F_i\mu)\\
	        &=a_i\bigl(F_i\eta, \Hc_i^\varphi R_i(\eta-\mu)\bigr)-a_i(F_i\eta,w_i)-a_i\bigl(F_i\mu, \Hc_i^\varphi R_i(\eta-\mu)\bigr)+a_i(F_i\mu,w_i)\\
	        &=a_i\bigl(F_i\eta, \Hc_i^\varphi R_i(\eta-\mu)\bigr)-\langle f_i, v_i\rangle-a_i\bigl(F_i\mu, \Hc_i^\varphi R_i(\eta-\mu)\bigr)+\langle f_i, v_i\rangle\\
	        &=a_i\bigl(F_i\eta, \Hc_i^\varphi R_i(\eta-\mu)\bigr)-a_i\bigl(F_i\mu, \Hc_i^\varphi R_i(\eta-\mu)\bigr)\\
	        &\leq C \|F_i\eta-F_i\mu\|_{W_i}\|\Hc_i^\varphi R_i(\eta-\mu)\|^2_{W_i}\leq C \|F_i\eta-F_i\mu\|_{W_i}\|\eta-\mu\|_Z.
    \end{aligned}
    \end{equation*}
    Dividing by $\|F_i\eta-F_i\mu\|_{W_i}$ proves the lemma.
\end{proof}
Next, we introduce the nonlinear time-dependent Steklov--Poincaré operators $S_i:Z\rightarrow Z^*$ by
\begin{displaymath}
\langle S_i\eta, \mu\rangle=a_i(F_i\eta, R_i\mu)-(f_i, R_i\mu)_{L^2(\Omega_i\times\R)}.
\end{displaymath}
We also write $S=S_1+S_2$. We can then introduce the weak Steklov--Poincaré equation $S\eta=0$ in $Z^*$ or, equivalently,
\begin{equation}\label{eq:speq}
	   \sum_{i=1}^2 \langle S_i\eta, \mu\rangle=0\quad \textrm{for all } \mu\in Z.
\end{equation}
\begin{remark}\label{rem:sp}
\emph{The Steklov--Poincaré operators do not depend on the choice of $R_i$. For an arbitrary extension $\tilde{R}_i:Z\rightarrow W_i$ such that $T_i\tilde{R}_i\mu=\mu$ we have, by \cref{eq:Wi0}, that $R_i\mu-\tilde{R}_i\mu\in W_i^0$. Combining this with \cref{eq:Fi} implies that
\begin{align*}
\langle S_i\eta, \mu\rangle&=a_i(F_i\eta, R_i\mu)-(f_i, R_i\mu)_{L^2(\Omega_i\times\R)}\\
&=a_i(F_i\eta, R_i\mu-\tilde{R}_i\mu)-(f_i, R_i\mu-\Tilde{R}_i\mu)_{L^2(\Omega_i\times\R)}
+a_i(F_i\eta, \tilde{R}_i\mu)-(f_i, \tilde{R}_i\mu)_{L^2(\Omega_i\times\R)}\\
&=a_i(F_i\eta, \tilde{R}_i\mu)-(f_i, \tilde{R}_i\mu)_{L^2(\Omega_i\times\R)}.
\end{align*}}
\end{remark}

The Steklov--Poincaré operators have similar properties as the forms $a_i$ in \cref{lemma:ai}.
\begin{lemma}\label{lemma:sp}
Suppose that \cref{ass:domains,ass:eq,ass:f} hold. Then the nonlinear time-dependent Steklov--Poincaré operator $S_i: Z\rightarrow Z^*$ is Lipschitz continuous, and uniform monotone in~$L^2(\R)\tensor\Lambda$. Moreover, for $\varphi>0$ small enough, the operator $(\Hc_\Gamma^\varphi)^*S_i: Z\rightarrow Z^*$ is uniformly monotone. Analogous results hold for $S$.
\end{lemma}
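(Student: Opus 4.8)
The plan is to deduce each of the three properties of $S_i$ from the corresponding property of the form $a_i$ established in \cref{lemma:ai}, using two structural ingredients: the Lipschitz continuity of the solution operator $F_i$ from \cref{lemma:Fi_lip}, and the bookkeeping device that, since $T_iF_i=I$ and $T_iR_i=I$ on $Z$, any difference of extensions of the same interface datum lies in $W_i^0$, where it is annihilated by the subdomain equation \cref{eq:Fi}. Concretely, for $\eta,\mu\in Z$ the element $w_i:=R_i(\eta-\mu)-(F_i\eta-F_i\mu)$ satisfies $T_iw_i=0$, hence $w_i\in W_i^0$ by \cref{eq:Wi0}, and \cref{eq:Fi} gives $a_i(F_i\eta,w_i)=(f_i,w_i)_{L^2(\Omega_i\times\R)}=a_i(F_i\mu,w_i)$.

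For Lipschitz continuity I would fix $\eta,\mu,\nu\in Z$. As the $f_i$-terms cancel,
\[
\langle S_i\eta-S_i\mu,\nu\rangle=a_i(F_i\eta,R_i\nu)-a_i(F_i\mu,R_i\nu).
\]
Combining the Lipschitz continuity of $a_i$ (\cref{lemma:ai}), the boundedness of $R_i$ (\cref{lemma:tracei}), and the Lipschitz continuity of $F_i$ (\cref{lemma:Fi_lip}) yields
\[
|\langle S_i\eta-S_i\mu,\nu\rangle|\leq C\|F_i\eta-F_i\mu\|_{W_i}\|R_i\nu\|_{W_i}\leq C\|\eta-\mu\|_Z\|\nu\|_Z,
\]
and taking the supremum over $\|\nu\|_Z\leq 1$ gives the Lipschitz bound.

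For uniform monotonicity in $L^2(\R)\tensor\Lambda$ I would write $\langle S_i\eta-S_i\mu,\eta-\mu\rangle=a_i(F_i\eta,R_i(\eta-\mu))-a_i(F_i\mu,R_i(\eta-\mu))$, substitute $R_i(\eta-\mu)=(F_i\eta-F_i\mu)+w_i$, and discard the $w_i$-contribution by the annihilation above. The uniform monotonicity of $a_i$ in $L^2(\R)\tensor H^1(\Omega_i)$ from \cref{lemma:ai} then gives
\[
\langle S_i\eta-S_i\mu,\eta-\mu\rangle\geq c\|F_i\eta-F_i\mu\|^2_{L^2(\R)\tensor H^1(\Omega_i)}.
\]
Since $T_i$ is bounded from $L^2(\R)\tensor V_i$ into $L^2(\R)\tensor\Lambda$ and $T_i(F_i\eta-F_i\mu)=\eta-\mu$, I would bound $\|\eta-\mu\|_{L^2(\R)\tensor\Lambda}\leq C\|F_i\eta-F_i\mu\|_{L^2(\R)\tensor H^1(\Omega_i)}$, which closes the estimate.

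The monotonicity of $(\Hc_\Gamma^\varphi)^*S_i$ follows the same template but requires threading the Hilbert twist through the extension. Using that $S_i$ is independent of the chosen right inverse (\cref{rem:sp}) and that $T_i\Hc_i^\varphi R_i=\Hc_\Gamma^\varphi$ by \cref{eq:hilbertphi}, I would rewrite
\[
\langle(\Hc_\Gamma^\varphi)^*S_i\eta,\mu\rangle=\langle S_i\eta,\Hc_\Gamma^\varphi\mu\rangle=a_i(F_i\eta,\Hc_i^\varphi R_i\mu)-(f_i,\Hc_i^\varphi R_i\mu)_{L^2(\Omega_i\times\R)},
\]
i.e.\ replace $R_i(\Hc_\Gamma^\varphi\mu)$ by the admissible extension $\Hc_i^\varphi R_i\mu$. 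Forming the monotonicity difference, inserting $\Hc_i^\varphi R_i(\eta-\mu)=\Hc_i^\varphi(F_i\eta-F_i\mu)+\Hc_i^\varphi w_i$ with $\Hc_i^\varphi w_i\in W_i^0$, and again eliminating the $W_i^0$-part via \cref{eq:Fi}, the uniform monotonicity of $a_i(\cdot,\Hc_i^\varphi\cdot)$ from \cref{lemma:ai} gives
\[
\langle(\Hc_\Gamma^\varphi)^*S_i\eta-(\Hc_\Gamma^\varphi)^*S_i\mu,\eta-\mu\rangle\geq c\|F_i\eta-F_i\mu\|^2_{W_i}\geq c\|\eta-\mu\|^2_Z,
\]
the last step using boundedness of $T_i:W_i\rightarrow Z$ (\cref{lemma:tracei}). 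The statements for $S=S_1+S_2$ then follow at once, since sums of Lipschitz (respectively uniformly monotone) operators retain these properties, choosing $\varphi>0$ small enough for both subdomains. I expect the main obstacle to be precisely this third part: one must invoke \cref{rem:sp} to move $\Hc_i^\varphi$ inside the extension so that the $\Hc_i^\varphi$-monotonicity of $a_i$ applies, while verifying that the residual term $\Hc_i^\varphi w_i$ still lands in $W_i^0$ so that the Galerkin orthogonality \cref{eq:Fi} can remove it.
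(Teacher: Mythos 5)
Your proposal is correct and follows essentially the same route as the paper: the same three estimates, each reduced to the corresponding property of $a_i$ from \cref{lemma:ai} via \cref{lemma:Fi_lip,lemma:tracei}, with the residual extension difference placed in $W_i^0$ and annihilated through \cref{eq:Fi}. The only cosmetic difference is in the third part, where the paper directly defines $w_i=R_i\Hc_\Gamma^\varphi(\eta-\mu)-\Hc_i^\varphi(F_i\eta-F_i\mu)$ and checks $T_iw_i=0$ via \cref{eq:hilbertphi}, while you reach the identical cancellation by invoking the extension-invariance of \cref{rem:sp}.
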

\begin{proof}
Throughout the proof, let $\eta,\mu,\lambda\in Z$ be arbitrary elements. The Lipschitz continuity of $S_i:Z\rightarrow Z^*$ is proved by \cref{lemma:ai,lemma:Fi_lip,lemma:tracei}, since
\begin{align*}
|\langle S_i\eta-S_i\mu, \lambda\rangle| &= |a_i(F_i\eta, R_i\lambda)-a_i(F_i\mu, R_i\lambda)|\leq C\|F_i\eta-F_i\mu\|_{W_i}\|R_i\lambda\|_{W_i}
\leq C\|\eta-\mu\|_Z\|\lambda\|_Z.
\end{align*}

To show the uniform monotonicity of $S_i$ in $L^2(\R)\tensor\Lambda$ let $w_i=R_i(\eta-\mu)-(F_i\eta-F_i\mu)$. Then
\begin{displaymath}
T_iw_i=T_i\bigl(R_i(\eta-\mu)-(F_i\eta-F_i\mu)\bigr)=0
\end{displaymath}
and therefore $w_i\in W_i^0$ by \cref{eq:Wi0}. This yields the monotonicity bound
\begin{align*}
\langle S_i\eta-S_i\mu, \eta-\mu\rangle&=a_i\bigl(F_i\eta, R_i(\eta-\mu))-a_i(F_i\mu, R_i(\eta-\mu)\bigr)\\
&=a_i(F_i\eta, F_i\eta-F_i\mu)-a_i(F_i\mu, F_i\eta-F_i\mu)+a_i(F_i\eta, w_i)-a_i(F_i\mu, w_i)\\
&\geq c\|F_i\eta-F_i\mu\|_{L^2(\R)\tensor H^1(\Omega_i)}^2\geq c\|T_i(F_i\eta-F_i\mu)\|_{L^2(\R)\tensor\Lambda}^2=c\|\eta-\mu\|_{L^2(\R)\tensor\Lambda}^2,
\end{align*}
using \cref{eq:Fi,lemma:ai} together with the fact that $T_i:L^2(\R)\tensor V_i\to L^2(\R)\tensor\Lambda$ is bounded.

In order to prove that the operator $(\Hc_\Gamma^\varphi)^*S_i: Z\rightarrow Z^*$ is uniformly monotone, we similarly introduce $w_i=R_i\Hc_\Gamma^\varphi(\eta-\mu)-\Hc_i^\varphi(F_i\eta-F_i\mu)$. From \cref{eq:hilbertphi} we have
\begin{align*}
T_iw_i&=T_i\bigl(R_i\Hc_\Gamma^\varphi(\eta-\mu)-\Hc_i^\varphi(F_i\eta-F_i\mu)\bigr)\\
&=\Hc_{\Gamma}^\varphi(\eta-\mu)-T_i\Hc_i^\varphi (F_i\eta-F_i\mu)\\
&=\Hc_{\Gamma}^\varphi(\eta-\mu)-\Hc_\Gamma^\varphi T_i(F_i\eta-F_i\mu)=0,
\end{align*}
and therefore $w_i\in W_i^0$ by \cref{eq:Wi0}. This together with~\cref{eq:Fi} implies that
\begin{align*}
 a_i\bigl(&F_i\eta, R_i\Hc_{\Gamma}^\varphi(\eta-\mu)\bigr)-a_i\bigl(F_i\mu, R_i\Hc_{\Gamma}^\varphi(\eta-\mu)\bigr) \\
 &=a_i\bigl(F_i\eta, \Hc_i^\varphi(F_i\eta-F_i\mu)\bigr)-a_i\bigl(F_i\mu, \Hc_i^\varphi(F_i\eta-F_i\mu)\bigr)
+a_i\bigl(F_i\eta, w_i\bigr)-a_i\bigl(F_i\mu, w_i\bigr)\\
 &=a_i\bigl(F_i\eta, \Hc_i^\varphi(F_i\eta-F_i\mu)\bigr)-a_i\bigl(F_i\mu, \Hc_i^\varphi(F_i\eta-F_i\mu)\bigr).
\end{align*}
Then \cref{lemma:tracei,lemma:ai} give,  for a sufficiently small parameter $\varphi>0$, that
\begin{align*}
\langle S_i\eta-S_i\mu, \Hc_\Gamma^\varphi(\eta-\mu)\rangle&=a_i\bigl(F_i\eta, \Hc_i^\varphi (F_i\eta-F_i\mu)\bigr)-a_i\bigl(F_i\mu, \Hc_i^\varphi (F_i\eta-F_i\mu)\bigr)\\
&\geq c\|F_i\eta-F_i\mu\|_{W_i}^2\geq c\|T_i(F_i\eta-F_i\mu)\|_Z^2=c\|\eta-\mu\|_Z^2.
\end{align*}
The bounds for $S$ follow by summing the bounds for $S_i$, $i=1,2$. 
\end{proof}
\begin{lemma}\label{lemma:tpspeq}
Suppose that \cref{ass:domains,ass:eq,ass:f} hold. The weak transmission problem and the weak Steklov--Poincaré equation are equivalent in the following way: If $(u_1, u_2)$ solves \cref{eq:weaktran} then $\eta=T_iu_i$ solves \cref{eq:speq}. Conversely, if $\eta$ solves \cref{eq:speq} then $(u_1, u_2)=(F_1\eta, F_2\eta)$ solves \cref{eq:weaktran}.
\end{lemma}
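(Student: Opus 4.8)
The plan is to observe that the operator $F_i$ from \cref{lemma:Fi} was built precisely to encode the first two defining relations of the transmission problem on each subdomain, so that \cref{lemma:tpspeq} reduces to matching the remaining interface relation of \cref{eq:weaktran} against the definition of the Steklov--Poincar\'e operators. No analytic estimates are needed here; the whole proof is a bookkeeping argument resting on the uniqueness statement in \cref{lemma:Fi} and the left-inverse identity $T_iF_i\eta=\eta$ recorded after \cref{eq:Fi}.

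For the forward implication I would start from a solution $(u_1,u_2)$ of \cref{eq:weaktran} and set $\eta=T_1u_1=T_2u_2$, which is a single well-defined element of $Z$ by the second line of \cref{eq:weaktran} together with the boundedness of $T_i:W_i\to Z$ from \cref{lemma:tracei}. The key step is the identification $u_i=F_i\eta$: the function $u_i\in W_i$ satisfies $T_iu_i=\eta$ and the subdomain equation $a_i(u_i,v_i)=(f_i,v_i)_{L^2(\Omega_i\times\R)}$ for all $v_i\in W_i^0$, and since \cref{lemma:Fi} asserts that such a $u_i$ is \emph{unique}, it must coincide with $F_i\eta$. Plugging $u_i=F_i\eta$ into the third line of \cref{eq:weaktran} and reading off the definition of $S_i$ turns that relation into $\sum_{i=1}^2\langle S_i\eta,\mu\rangle=0$ for all $\mu\in Z$, which is exactly \cref{eq:speq}.

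For the converse I would take a solution $\eta$ of \cref{eq:speq} and define $(u_1,u_2)=(F_1\eta,F_2\eta)$. The first relation of \cref{eq:weaktran} is then immediate from the defining property \cref{eq:Fi} of $F_i$; the second follows from $T_iF_i\eta=\eta$, giving $T_1u_1=\eta=T_2u_2$; and unwinding the definition of $S_i$ converts the third relation into $\sum_{i=1}^2\langle S_i\eta,\mu\rangle=0$, i.e.\ \cref{eq:speq}. The only point requiring any care is the uniqueness identification $u_i=F_i\eta$ in the forward direction, so I would make sure to invoke \cref{lemma:Fi} explicitly there rather than treat it as automatic. Finally, since the excerpt's statement also claims unique solvability downstream, I would note that uniqueness for \cref{eq:weaktran} transfers through this equivalence from the unique solvability of \cref{eq:speq}, which is obtained by applying \cite[Theorem~25.B]{zeidler2b} to $(\Hc_\Gamma^\varphi)^*S$ using its Lipschitz continuity and uniform monotonicity from \cref{lemma:sp}.
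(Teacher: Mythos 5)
Your proposal is correct and is exactly the argument the paper intends: the paper's proof consists of the single remark that the result ``is immediate after writing out the definitions of the Steklov--Poincar\'e operators $S_i$,'' and your write-up is precisely that unwinding, with the one genuinely necessary step (identifying $u_i=F_i\eta$ via the uniqueness assertion of \cref{lemma:Fi} in the forward direction) correctly made explicit. The only slight inaccuracy is your closing remark about unique solvability: the statement of \cref{lemma:tpspeq} itself makes no uniqueness claim (that appears in \cref{lemma:tranequiv} and \cref{cor:spbijective}), so that addition, while harmless, is not needed.
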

The proof of \cref{lemma:tpspeq} is immediate after writing out the definitions of the Steklov--Poincaré operators $S_i$, $i=1,2$. 

\section{Linear convergence of the modified Dirichlet--Neumann methods}\label{sec:lin}

The goal of this section is to develop new iterative methods for solving the weak Steklov--Poincaré equation $S\eta=0$ in $Z^*$; see~\cref{eq:speq}, that provably converges linearly (geometrically) without any extra regularity assumptions. As the $H^{1/2}$-framework presented in~\cref{sec:weak} resembles a nonlinear elliptic setting, especially with the uniform monotonicity property, it is natural to start off with a standard domain decomposition for elliptic problems. To this end, consider the Dirichlet--Neumann method 
\begin{equation}\label{eq:dn}
        \eta^{n+1}=\eta^n+s S_2^{-1}(0-S\eta^n),
\end{equation}
where $s>0$ is a method parameter. We refer to~\cite[Chapter~1.3]{quarteroni} for the derivation of~\cref{eq:dn}. The issue here is that the linear convergence analysis of the Dirichlet--Neumann method for elliptic equations relies on~$S_2$ being linear and symmetric. The latter is not valid as the time derivative in our parabolic problem is nonsymmetric. 

To resolve this, we approximate the solution to an equation of the form $G\eta=\chi$ in $X^*$ by the \emph{modified Dirichlet--Neumann} (MDN) method
\begin{equation}\label{eq:abstractmethod}
    \eta^{n+1}=\eta^n+s P^{-1}Q^*(\chi-G\eta^n),
\end{equation}
where $s>0$ is again a parameter$, \eta^0$ is a given initial guess, and the operators $P:X\rightarrow X^*$, $Q:X\rightarrow X$ are chosen appropriately. Here, the Dirichlet--Neumann method is recovered by setting
\begin{displaymath}
(X,G,Q,P)=(Z,S,I,S_2)\quad\text{and}\quad\chi=0.
\end{displaymath}
Before we derive our new methods we will prove a slight generalization of Zarantello's theorem~\cite[Theorem 25.B]{zeidler2b}. This generalization will characterize a problem/method family $(X,G,Q,P)$ that enables linear convergence. 
\begin{theorem}\label{thm:zarantello}
Let $X$ be a (real) Hilbert space and $G:X\rightarrow X^*$ be a nonlinear operator. Assume that there exists a linear isomorphism $Q:X\rightarrow X$
such that $Q^*G:X\rightarrow X^*$ is Lipschitz continuous and uniformly monotone. Furthermore, let $P:X\rightarrow X^*$ be any linear operator that is bounded, symmetric, and coercive.

Then $G$ is bijective, and for every $\chi\in X^*$, $\eta^0\in X$, and a sufficiently small $s>0$ the MDN iteration~\cref{eq:abstractmethod} converges to~$\eta$, the solution of
\begin{displaymath}
        G\eta=\chi.
\end{displaymath}
The converges is linear, i.e., there exists constants $C>0$ and $L\in(0,1)$ such that
\begin{displaymath}
	   \|\eta^n-\eta\|_X\leq C L^n\|\eta^0-\eta\|_X.
\end{displaymath}
\end{theorem}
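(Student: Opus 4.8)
The plan is to combine the classical Zarantello theorem with an energy estimate carried out in the inner product induced by $P$. First I would establish bijectivity of $G$. Since $Q^*G$ is Lipschitz continuous and uniformly monotone, the standard Zarantello theorem~\cite[Theorem 25.B]{zeidler2b} gives that $Q^*G:X\to X^*$ is bijective. As $Q$ is a linear isomorphism, so is its adjoint $Q^*:X^*\to X^*$ (with inverse $(Q^{-1})^*$), whence $G=(Q^*)^{-1}(Q^*G)$ is bijective as well. In particular the equation $G\eta=\chi$ has a unique solution $\eta$ for every $\chi\in X^*$. Moreover, since $s>0$ and $P^{-1}$, $Q^*$ are injective, a point $\zeta$ is a fixed point of the iteration map $\zeta\mapsto \zeta+sP^{-1}Q^*(\chi-G\zeta)$ precisely when $Q^*G\zeta=Q^*\chi$, i.e.\ when $G\zeta=\chi$; thus $\eta$ is the unique fixed point of the MDN iteration~\cref{eq:abstractmethod}.

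For the convergence I would work with the error $e^n=\eta^n-\eta$ and the abbreviation $K=Q^*G$, which is Lipschitz continuous with some constant $L_K$ and uniformly monotone with some constant $c_K$. Subtracting the fixed-point identity $\eta=\eta+sP^{-1}Q^*(\chi-G\eta)$ from~\cref{eq:abstractmethod} and using $\chi=G\eta$ gives the error recursion
\begin{equation*}
e^{n+1}=e^n-sP^{-1}(K\eta^n-K\eta).
\end{equation*}
The key idea is to measure the error in the energy norm $\|v\|_P^2=\langle Pv,v\rangle$, which is equivalent to $\|\cdot\|_X$ because $P$ is bounded and coercive, say $c\|v\|_X^2\le\|v\|_P^2\le C\|v\|_X^2$. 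Writing $d^n=K\eta^n-K\eta$ and using the symmetry of $P$ to identify $\langle Pe^n,P^{-1}d^n\rangle=\langle d^n,e^n\rangle$, the expansion of $\|e^{n+1}\|_P^2=\langle Pe^n-sd^n,\,e^n-sP^{-1}d^n\rangle$ collapses to
\begin{equation*}
\|e^{n+1}\|_P^2=\|e^n\|_P^2-2s\langle d^n,e^n\rangle+s^2\langle d^n,P^{-1}d^n\rangle.
\end{equation*}

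I would then bound the two remaining terms. Uniform monotonicity of $K$ yields $\langle d^n,e^n\rangle\ge c_K\|e^n\|_X^2$, while the boundedness of $P^{-1}$ together with the Lipschitz bound $\|d^n\|_{X^*}\le L_K\|e^n\|_X$ gives $\langle d^n,P^{-1}d^n\rangle\le \|P^{-1}\|\,L_K^2\|e^n\|_X^2$. Combining these with the norm equivalence produces
\begin{equation*}
\|e^{n+1}\|_P^2\le\Bigl(1-\tfrac{1}{C}\bigl(2sc_K-s^2\|P^{-1}\|L_K^2\bigr)\Bigr)\|e^n\|_P^2.
\end{equation*}
For $s>0$ small enough the bracketed factor lies in $(0,1)$; calling it $L^2$ gives $\|e^{n+1}\|_P\le L\|e^n\|_P$, and unwinding the recursion together with the norm equivalence yields $\|\eta^n-\eta\|_X\le \sqrt{C/c}\,L^n\|\eta^0-\eta\|_X$, which is the claimed linear convergence.

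The main obstacle is the energy estimate, and within it the crucial structural point is that $G$ itself need not be monotone -- only $Q^*G$ is. This is exactly why $Q^*$ appears in the iteration~\cref{eq:abstractmethod}: it converts the increment $G\eta^n-G\eta$ into $K\eta^n-K\eta$, for which the monotonicity and Lipschitz bounds are available. The second delicate ingredient is the symmetry of $P$, which is what makes the two cross terms in the expansion of $\|e^{n+1}\|_P^2$ coincide and combine into a single term controllable by monotonicity; without symmetry one would be left with an uncontrolled cross term and the contraction argument would break down.
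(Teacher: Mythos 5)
Your proposal is correct and takes essentially the same approach as the paper: a contraction estimate in the energy norm $\|\cdot\|_P$, where the symmetry of $P$ collapses the two cross terms, the uniform monotonicity of $Q^*G$ supplies the $-2s\langle d^n,e^n\rangle$ decrease, and the Lipschitz continuity of $Q^*G$ together with the boundedness of $P^{-1}$ controls the $s^2$ term, yielding the factor $1-cs+Cs^2<1$ for small $s$. The only cosmetic difference is that you import bijectivity of $G$ from the classical Zarantello theorem applied to $Q^*G$ and then treat $\eta$ as a known fixed point, whereas the paper obtains existence directly from the Banach fixed point theorem for the very same iteration map; the quantitative estimates coincide.
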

\begin{proof}
Consider the operator $K\mu=\mu+s P^{-1}Q^*(\chi-G\mu)$. Then
\begin{displaymath}
        K\mu-K\lambda=\mu-\lambda-sP^{-1}(Q^*G\mu-Q^*G\lambda).
\end{displaymath}
We wish to show that $K:X\rightarrow X$ is a contraction. For this, we define the inner product
\begin{displaymath}
       (\mu, \lambda)_P=\langle P\mu, \lambda\rangle.
\end{displaymath}
It is clear that this defines a norm $\|\cdot\|_P$ that is equivalent to $\|\cdot\|_X$. Therefore, we will show that $K$ is a contraction in the norm $\|\cdot\|_P$. We split the norm into the three terms 
\begin{align*}
\|K\mu-K\lambda\|_P^2&=\bigl(\mu-\lambda-sP^{-1}(Q^*G\mu-Q^*G\lambda),\mu-\lambda-sP^{-1}(Q^*G\mu-Q^*G\lambda)\bigr)_P\\
        &=\|\mu-\lambda\|_P^2+s^2\bigl(P^{-1}(Q^*G\mu-Q^*G\lambda),P^{-1}(Q^*G\mu-Q^*G\lambda)\bigr)_P\\
        &\quad-s\Bigl(\bigl(\mu-\lambda,P^{-1}(Q^*G\mu-Q^*G\lambda)\bigr)_P+\bigl(P^{-1}(Q^*G\mu-Q^*G\lambda),\mu-\lambda\bigr)_P\Bigr)\\
        &=I_1+I_2+I_3.
\end{align*}
For the second term $I_2$ we use that $Q^*G$ is Lipschitz and $P^{-1}$ is bounded, which follows from the fact that $P$ is bounded and coercive. We also use the norm equivalence above to obtain
\begin{align*}
I_2&=\bigl(P^{-1}(Q^*G\mu-Q^*G\lambda),P^{-1}(Q^*G\mu-Q^*G\lambda)\bigr)_P=
	  \bigl\langle Q^*G\mu-Q^*G\lambda, P^{-1}(Q^*G\mu-Q^*G\lambda)\bigr\rangle\\
        &\leq C\|\mu-\lambda\|_X \|P^{-1}(Q^*G\mu-Q^*G\lambda)\|_X\leq C\|\mu-\lambda\|_X \|Q^*G\mu-Q^*G\lambda\|_{X^*}\\
        &\leq C\|\mu-\lambda\|_X \|\mu-\lambda\|_X\leq C\|\mu-\lambda\|_P^2.
	\end{align*}
For the third term $I_3$ we use the symmetry of $P$ and the uniform monotonicity of $Q^*G$. Again, we also use the norm equivalence above. These properties yield that
\begin{align*}
I_3&=-s\bigl(\mu-\lambda,P^{-1}(Q^*G\mu-Q^*G\lambda)\bigr)_P-s\bigl(P^{-1}(Q^*G\mu-Q^*G\lambda),\mu-\lambda\bigr)_P\\
        &=-s\bigl\langle P(\mu-\lambda),P^{-1}(Q^*G\mu-Q^*G\lambda)\bigr\rangle-s\bigl\langle Q^*G\mu-Q^*G\lambda,\mu-\lambda\bigr\rangle\\
        &=-2s\bigl\langle Q^*G\mu-Q^*G\lambda,\mu-\lambda\bigr\rangle\leq -cs\|\mu-\lambda\|_X^2\leq-cs\|\mu-\lambda\|_P^2.
\end{align*}
Thus we have that
\begin{displaymath}
	\|K\mu-K\lambda\|_P^2\leq(1+Cs^2-cs)\|\mu-\lambda\|_P^2.
\end{displaymath}
If we choose $s>0$ small enough then $K$ is a contraction and therefore there exists a unique fixed point $\eta\in X$ such that $P^{-1}Q^*(\chi-G\eta)=0$. Since $P, Q^*$ are both linear and bijective we have that $G\eta=\chi$. Finally, since $\chi\in X^*$ was arbitrary, we conclude that $G$ is bijective.

Now the error of the iteration~\cref{eq:abstractmethod} can be written as
 \begin{align*}
         \eta^{n+1}-\eta&=\eta^n-\eta+s P^{-1}Q^*(\chi-G\eta^n)\\
    		&=\eta^n-\eta+s P^{-1}(Q^*G\eta-Q^*G\eta^n)=K\eta^n-K\eta,
\end{align*}
and therefore
\begin{displaymath}
    		\|\eta^{n+1}-\eta\|_P\leq L\|\eta^n-\eta\|_P
\end{displaymath}
with $L=1+s^2C-cs$ as above. This, together with the norm equivalence, implies that
\begin{displaymath}
\|\eta^n-\eta\|_X\leq C\|\eta^n-\eta\|_P\leq C L^n\|\eta^0-\eta\|_P\leq C L^n\|\eta^0-\eta\|_X
\end{displaymath}
and the sought after linear convergence is obtained. 
\end{proof}
\begin{remark}
\emph{The proof of Zarantello's theorem in~\cite[Theorem 25.B]{zeidler2b} is given by the choice $Q=I$ and $P:u\mapsto (u,\cdot)_X$. We have already made use of this in~\cref{lemma:ai}, with 
\begin{displaymath}
(X,G,Q,P)=\bigl(W_i,\, u\mapsto a_i(u, \Hc_i^\varphi\cdot),\, I,\, u\mapsto (u,\cdot)_{W_i}\bigr),
\end{displaymath}
in order to prove existence and uniqueness of the corresponding parabolic problems.}
\end{remark}
As $\Hc_\Gamma^\varphi$ is a linear isomorphism on $Z$ and both $(\Hc_\Gamma^\varphi)^*S: Z\rightarrow Z^*$ and $(\Hc_\Gamma^\varphi)^*S_i: Z\rightarrow Z^*$ are Lipschitz continuous and uniformly monotone for a sufficiently small $\varphi>0$, according to~\cref{lemma:sp}, one directly obtains the result below from~\cref{thm:zarantello}. 
\begin{corollary}\label{cor:spbijective}
Let \cref{ass:domains,ass:eq,ass:f} hold. Then the Steklov--Poincaré operators $S, S_i:Z\rightarrow Z^*$ are bijective.
\end{corollary}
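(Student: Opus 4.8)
The plan is to obtain Corollary~\ref{cor:spbijective} as a direct application of Theorem~\ref{thm:zarantello} (the generalized Zarantello theorem) to each of the operators $S$ and $S_i$ in turn. The theorem guarantees bijectivity of a nonlinear operator $G:X\rightarrow X^*$ provided one can exhibit a linear isomorphism $Q:X\rightarrow X$ such that $Q^*G$ is Lipschitz continuous and uniformly monotone. Thus the entire task reduces to checking that the hypotheses of the theorem are met in our concrete setting, and then invoking it.

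The concrete choice is to take $X=Z$, and $G$ to be either $S$ or $S_i$. For the isomorphism we set $Q=\Hc_\Gamma^\varphi$, which is indeed a linear isomorphism of $Z$ for the fixed sufficiently small $\varphi>0$ supplied by Lemma~\ref{lemma:sp}; recall that the invertibility of $\Hc_\Gamma^\varphi:Z\rightarrow Z$ was established just after Lemma~\ref{lemma:hilbert}. With these choices the adjoint $Q^*$ is exactly $(\Hc_\Gamma^\varphi)^*$, so that $Q^*G$ equals $(\Hc_\Gamma^\varphi)^*S$ or $(\Hc_\Gamma^\varphi)^*S_i$. Lemma~\ref{lemma:sp} tells us precisely that $S_i$ and $S$ are Lipschitz continuous on $Z$, and that $(\Hc_\Gamma^\varphi)^*S_i$ and $(\Hc_\Gamma^\varphi)^*S$ are uniformly monotone for this same small $\varphi>0$. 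Since Lipschitz continuity of $S$ (respectively $S_i$) together with boundedness of the linear operator $(\Hc_\Gamma^\varphi)^*$ immediately yields Lipschitz continuity of the composition $(\Hc_\Gamma^\varphi)^*S$, all hypotheses on $Q^*G$ required by Theorem~\ref{thm:zarantello} are verified.

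Consequently I would apply Theorem~\ref{thm:zarantello} once with $G=S$ and once with $G=S_i$ (for each $i=1,2$), in each case concluding that $G:Z\rightarrow Z^*$ is bijective. This proves that $S$ and both $S_1,S_2$ are bijective as claimed.

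There is essentially no obstacle here: the corollary is a bookkeeping consequence, and all the analytic content---the Lipschitz and monotonicity estimates, which rest on the fractional integration-by-parts identity of Lemma~\ref{lemma:timeder}, the equivalent-norm characterization, and the careful rotation by $\Hc_i^\varphi$ to compensate for the nonsymmetric time derivative---has already been carried out in Lemma~\ref{lemma:sp}. The only point meriting a line of comment is the verification that the data of Theorem~\ref{thm:zarantello} match: that $Q=\Hc_\Gamma^\varphi$ is a genuine isomorphism of $Z$ and that its adjoint produces precisely the twisted operator appearing in Lemma~\ref{lemma:sp}. Once that identification is made explicit, the conclusion is immediate, and indeed the text preceding the corollary already signals this by the phrase ``one directly obtains the result below.''
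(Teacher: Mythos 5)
Your proposal is correct and follows exactly the paper's own argument: the corollary is obtained by applying Theorem~\ref{thm:zarantello} with $X=Z$, $G=S$ or $S_i$, and $Q=\Hc_\Gamma^\varphi$, whose hypotheses are supplied by Lemma~\ref{lemma:sp} (plus the boundedness of $(\Hc_\Gamma^\varphi)^*$ for the Lipschitz property of the twisted operator, which you rightly make explicit). The only detail left implicit is that one must also pick some admissible $P$ (e.g.\ the Riesz map $\eta\mapsto(\eta,\cdot)_Z$) to invoke the theorem, but since bijectivity is independent of this choice the point is immaterial.
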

From~\cref{lemma:sp} it is also clear that our MDN methods should have the form
\begin{displaymath}
(X,G,Q,P)=(Z,S,\Hc_\Gamma^\varphi,P).
\end{displaymath}
Hence, it remains to choose the operator $P:Z\rightarrow Z^*$ such that it is linear, bounded, symmetric, and coercive. As these properties are equivalent to $\langle P\cdot, \cdot\rangle$ being an inner product on $Z$, we simply search for computationally feasible inner products on $Z$. 
\begin{remark}
\emph{The linear operator~$P$ should obviously only depend on the computations related to one of the space-time subdomains, e.g., $\Omega_2\times \R$, otherwise the associated MDN method does not yield a domain decomposition. Furthermore, the linearity of the operator~$P$ implies that the MDN method is a (linearly convergent) iterative scheme that only requires a linear pre-conditioner for the nonlinear problem $S\eta=0$. This is not the case for the original domain decomposition method~\cref{eq:dn}.}
\end{remark}
A first possible method is given below.
\begin{method}
The solution of $S\eta=0$ in $Z^*$ is approximated by the iteration
\begin{equation}\label{method:1}
    \eta^{n+1}=\eta^n+s P_1^{-1}(\Hc_\Gamma^\varphi)^*(0-S\eta^n), 
\end{equation}
where $(\varphi,s)$ are positive parameters, $\eta^0$ is an initial guess, and the operator $P_1:Z\rightarrow Z^*$ is given by
\begin{displaymath}
\langle P_1\eta, \mu\rangle=\int_{\R}\int_{\Omega_2}\partial_+^{1/2}R_2\eta\partial_+^{1/2}R_2\mu+\nabla R_2\eta\cdot\nabla R_2\mu\,\,\mathrm{d}x\mathrm{d}t\quad\text{for all }\eta,\mu\in Z.
\end{displaymath}
\end{method}
\begin{remark}\label{rem:B}
\emph{We are free to choose any linear right inverse $R_2$ of the trace operator $T_2$ in the method above. However, the specific choice $R_2=B$, where 
$B:\eta\mapsto w_2$ is the solution operator for the equation
\begin{displaymath}
\int_{\R}\int_{\Omega_2}\partial_+^{1/2}w_2\partial_+^{1/2}v+\nabla w_2\cdot\nabla v\,\mathrm{d}x\mathrm{d}t=0\quad\textrm{for all } v\in  W_2^0
\end{displaymath}
with $T_2w_2=\eta$, yields that $P_1$ becomes invariant to the choice of $R_2$ in the second argument. That is,
\begin{displaymath}
\langle P_1\eta, \mu\rangle=\int_{\R}\int_{\Omega_2}\partial_+^{1/2}B\eta\partial_+^{1/2}\tilde{R}_2\mu+\nabla B\eta\cdot\nabla \tilde{R}_2\mu\,\,\mathrm{d}x\mathrm{d}t
\end{displaymath}
for every $\tilde{R}_2$; compare with~\cref{rem:sp}. Note that this possibility to extend $\eta$ and $\mu$ to $W_2$ in different ways enables more efficient implementations of the method.}
\end{remark}
A second approach is to treat the spatial and temoral terms differently when constructing the operator~$P$. One does not even need to employ parabolic extensions of the functions~$\eta$ and~$\mu$. To illustrate this, we introduce a temporal quarter-derivative on $\Gamma\times\R$ as 
\begin{displaymath} 
\partial^{1/4}=\hat{\partial}^{1/4}\tensor I:H^{1/4}(\R)\tensor L^2(\Gamma)\rightarrow L^2(\Gamma\times\R),
\end{displaymath} 
where $\hat{\partial}^{1/4}=\hat{\F}^{-1}\hat{M}_{1/4}\hat{\F}$ and $\hat{M}_{1/4}v(\xi)=(\mathrm{i}\xi)^{1/4} v(\xi)$. Note that this choice is not unique; compare with $\hat{\partial}^{1/2}_+$ and $\hat{\partial}^{1/2}_-$ defined in~\cref{eq:halftime}. Next, we introduce an elliptic extension to $\Omega_2\times\R$ via
\begin{displaymath} 
H_2= I \tensor \hat{H}_2:L^2(\R)\tensor \Lambda\rightarrow L^2(\R)\tensor V_2,
\end{displaymath} 
where $\hat{H}_2:\Lambda\to V_2:\eta\mapsto w_2$ is the solution operator for the (weak) linear elliptic equation 
\begin{displaymath}
   \int_{\Omega_2}\nabla w_2\cdot\nabla v\,\mathrm{d}x=0\quad\textrm{for all } v\in V_2
\end{displaymath}
with $\hat{T}_2w_2=\eta$. As for previous extended operators, \cref{lemma:tensor} yields that $\partial^{1/4}$ and $H_2$ are both linear bounded operators. Furthermore, $H_2$ is a right inverse to the trace operator with the ``larger  domain'' $T_2=I \tensor \hat{T}_2:L^2(\R)\tensor V_2\to L^2(\R)\tensor\Lambda$. We can now define the following method.
\begin{method}
The solution of $S\eta=0$ in $Z^*$ is approximated by the iteration
\begin{equation}\label{method:2}
    \eta^{n+1}=\eta^n+s P_2^{-1}(\Hc_\Gamma^\varphi)^*(0-S\eta^n),
\end{equation}
where $(\varphi,s)$ are positive parameters, $\eta^0$ is an initial guess, and the operator $P_2:Z\rightarrow Z^*$ is given by
\begin{displaymath}
\langle P_2\eta, \mu\rangle=\int_{\R}\int_{\Gamma}\partial^{1/4}\eta\partial^{1/4}\mu\,\mathrm{d}x\mathrm{d}t
                                            +\int_{\R}\int_{\Omega_2}\nabla H_2\eta\cdot\nabla H_2\mu\,\mathrm{d}x\mathrm{d}t\quad\text{for all }\eta,\mu\in Z.
\end{displaymath}
\end{method}
\begin{remark}
\emph{The same invariance as described in~\cref{rem:B} holds true for the operator $P_2$, i.e., the term~$H_2\mu$ can be replaced by any element $\tilde{R}_2\mu$.}
\end{remark}
\begin{remark}
\emph{Another natural method choice would simply be to set $P_3:\eta\mapsto (\eta,\cdot)_Z$. This is theoretically possible, but we are unaware of any efficient way to implement the $1/2$-derivatives related to $\Lambda$ for a nontrivial spatial interface $\Gamma$.}
\end{remark}
\begin{lemma}\label{lemma:P}
Let \cref{ass:domains} be valid. Then the operators $P_\ell:Z\to Z^*$, $\ell=1,2$, are linear, bounded, symmetric, and coercive. 
\end{lemma}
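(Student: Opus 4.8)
The plan is to verify the four properties---linearity, boundedness, symmetry, and coercivity---for each operator $P_\ell$, $\ell=1,2$, treating them separately since they are built from different ingredients. Linearity is immediate in both cases: $P_1$ and $P_2$ are defined through the linear extension operators $R_2$, $B$, or $H_2$ (all linear by construction or by \cref{lemma:tensor}) together with the linear operators $\partial_+^{1/2}$, $\nabla$, and $\partial^{1/4}$, and the defining expressions are bilinear in $(\eta,\mu)$. Symmetry is equally direct, since swapping $\eta\leftrightarrow\mu$ leaves each integrand unchanged; the integrals defining $\langle P_\ell\eta,\mu\rangle$ are manifestly symmetric bilinear forms.

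The content lies in boundedness and coercivity. For boundedness I would estimate each term using Cauchy--Schwarz in $L^2(\Omega_2\times\R)$ (or $L^2(\Gamma\times\R)$) followed by the boundedness of the operators involved. For $P_1$, the right inverse $R_2:Z\to W_2$ is bounded by \cref{lemma:tracei}, and $\partial_+^{1/2}:H^{1/2}(\R)\tensor L^2(\Omega_2)\to L^2(\Omega_2\times\R)$ and $\nabla:L^2(\R)\tensor H^1(\Omega_2)\to L^2(\Omega_2\times\R)^d$ are bounded as noted in~\cref{sec:weak}; hence $|\langle P_1\eta,\mu\rangle|\leq C\|R_2\eta\|_{W_2}\|R_2\mu\|_{W_2}\leq C\|\eta\|_Z\|\mu\|_Z$. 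For $P_2$ the same argument applies with $\partial^{1/4}$ bounded on $H^{1/4}(\R)\tensor L^2(\Gamma)$ (by \cref{lemma:tensor}) and $H_2$ bounded; the $Z$-norm controls both the $H^{1/4}$-regularity in time needed for the first term and the $\Lambda$-regularity in space needed through $H_2$.

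Coercivity is the main obstacle and the one requiring genuine care. I must show $\langle P_\ell\eta,\eta\rangle\geq c\|\eta\|_Z^2$, and the difficulty is that the $Z$-norm measures $\eta$ intrinsically on $\Gamma\times\R$ while $P_\ell$ is built from extensions into $\Omega_2\times\R$. For $P_1$, I would use the equivalent norm on $W_2$ from~\cref{eq:eqivnorms}, writing $\langle P_1\eta,\eta\rangle=\|\partial_+^{1/2}R_2\eta\|_{L^2}^2+\|\nabla R_2\eta\|_{L^2}^2\geq c\|R_2\eta\|_{W_2}^2$; then the crucial lower bound $\|R_2\eta\|_{W_2}\geq c\|T_2 R_2\eta\|_Z=c\|\eta\|_Z$ must be extracted---this follows because $T_2:W_2\to Z$ is bounded and $T_2R_2=\mathrm{id}_Z$, so $\|\eta\|_Z=\|T_2R_2\eta\|_Z\leq\|T_2\|\,\|R_2\eta\|_{W_2}$, giving $\|R_2\eta\|_{W_2}\geq c\|\eta\|_Z$. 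For $P_2$ coercivity is cleaner in the temporal term: $\int_\R\int_\Gamma|\partial^{1/4}\eta|^2$ equals (up to constants, via the Fourier characterization) the $H^{1/4}(\R)\tensor L^2(\Gamma)$-seminorm of $\eta$, while the elliptic term $\|\nabla H_2\eta\|_{L^2}^2$ controls the $L^2(\R)\tensor\Lambda$-part through the trace bound $\|\eta\|_{L^2(\R)\tensor\Lambda}\leq C\|H_2\eta\|_{L^2(\R)\tensor V_2}\leq C\|\nabla H_2\eta\|_{L^2}$, again using that $H_2$ is a bounded right inverse of $T_2$ and Poincaré's inequality on $V_2$. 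Summing these two contributions reconstructs the full $Z$-norm $\|\eta\|_Z^2\sim\|\eta\|_{H^{1/4}(\R)\tensor L^2(\Gamma)}^2+\|\eta\|_{L^2(\R)\tensor\Lambda}^2$, yielding coercivity.
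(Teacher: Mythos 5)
Your proposal is correct and follows essentially the same route as the paper: dispatch linearity, symmetry, and boundedness as routine, then prove coercivity of $P_1$ via the equivalent $W_2$-norm of \cref{eq:eqivnorms} together with the chain $\|R_2\eta\|_{W_2}\geq c\|T_2R_2\eta\|_Z=c\|\eta\|_Z$, and coercivity of $P_2$ by combining the temporal $\partial^{1/4}$-term with the bound $\|\nabla H_2\eta\|_{L^2(\Omega_2\times\R)^d}\geq c\|\eta\|_{L^2(\R)\tensor\Lambda}$ obtained from Poincar\'e and the boundedness of $T_2:L^2(\R)\tensor V_2\to L^2(\R)\tensor\Lambda$. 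The only step you gloss over is why the seminorm $\|\partial^{1/4}\eta\|_{L^2(\Gamma\times\R)}$ plus the $L^2(\R)\tensor\Lambda$-norm dominates the full $Z$-norm; the paper makes this explicit by observing that $\|\cdot\|_{L^2(\R)\tensor\Lambda}$ already contains the $\|\cdot\|_{L^2(\Gamma\times\R)}$-term needed to upgrade the temporal seminorm to the full $H^{1/4}(\R)\tensor L^2(\Gamma)$-norm.
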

\begin{proof}
The operators $P_\ell$ are readily well defined on $Z$, linear, bounded, and symmetric. The only property that is nontrivial is the coercivity. To this end, we recall the equivalent norms in~\cref{eq:eqivnorms}, and observe that 
\begin{displaymath}
\eta\mapsto\bigl(\|\partial^{1/4}\eta\|^2_{L^2(\Gamma\times\R)}+\|\eta\|^2_{L^2(\R)\tensor\Lambda}\bigr)^{1/2}
\end{displaymath}
is an equivalent norm on $Z$. This follows by the same argumentation as for the $W_i$-norm in~\cref{eq:eqivnorms} and the fact that $\|\cdot\|_{L^2(\R)\tensor\Lambda}$ contains an $\|\cdot\|_{L^2(\Gamma\times\R)}$-term. Moreover, $T_2$ is bounded both when 
interpreted as a mapping from $W_2$ to $Z$, and from $L^2(\R)\tensor V_2$ to $L^2(\R)\tensor\Lambda$. With this we have the inequality
\begin{displaymath}
\langle P_1\eta,\eta\rangle=\|\partial^{1/2}_+R_2\eta\|^2_{L^2(\Omega_i\times\R)}+\|\nabla R_2\eta\|^2_{L^2(\Omega_i\times\R)^d}
     \geq c\|R_2\eta\|^2_{W_i}\geq c\|T_2R_2\eta\|^2_{Z}=c\|\eta\|^2_Z,
\end{displaymath}
i.e., $P_1$ is coercive. Furthermore, we have the bound 
\begin{displaymath}
\|\nabla H_2\eta\|^2_{L^2(\Gamma\times\R)^d}\geq c\|H_2\eta\|^2_{L^2(\R)\tensor V_2}
                                                                          \geq c\|T_2H_2\eta\|^2_{L^2(\R)\tensor \Lambda}
                                                                          = c\|\eta\|^2_{L^2(\R)\tensor \Lambda},
\end{displaymath}
and therefore 
\begin{displaymath}
\langle P_2\eta,\eta\rangle =\|\partial^{1/4}\eta\|^2_{L^2(\Gamma\times\R)}+\|\nabla H_2\eta\|^2_{L^2(\Gamma\times\R)^d}\\
                                           \geq c\bigl(\|\partial^{1/4}\eta\|^2_{L^2(\Gamma\times\R)}+\|\eta\|^2_{L^2(\R)\tensor \Lambda}\bigr)\\
                                           \geq c\|\eta\|^2_Z.
\end{displaymath}
Thus, $P_2$ is also coercive.
\end{proof}
Applying these results gives us the linear convergence result below.
\begin{theorem}\label{cor:mdnconv}
Let \cref{ass:domains,ass:eq,ass:f} be valid. For sufficiently small positive parameters $(\varphi,s)$ and any initial guess $\eta^0\in Z$, the iterates $\{\eta^n\}_{n\geq1}$ of the modified Dirichlet--Neumann methods~\cref{method:1,method:2} are well defined and converge linearly in $Z$ to $\eta$, the solution of the weak Steklov--Poincaré equation~\cref{eq:speq}. Moreover, the iterates $\{F_i\eta^n\}_{n\geq1}$ converges linearly in $W_i$ to $\restr{u}{ \Omega_i\times\R}$, where $u$ is the solution of the weak equation~\cref{eq:weak}.
\end{theorem}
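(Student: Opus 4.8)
The plan is to reduce Theorem~\ref{cor:mdnconv} to two separate applications of the machinery already assembled. First I would verify the convergence of the interface iterates $\{\eta^n\}$ directly via Theorem~\ref{thm:zarantello} (the generalized Zarantello theorem). To do this I identify the abstract problem/method family as
\begin{displaymath}
(X,G,Q,P)=(Z,\,S,\,\Hc_\Gamma^\varphi,\,P_\ell),\qquad \ell=1,2,\quad\text{and}\quad \chi=0.
\end{displaymath}
By~\cref{lemma:sp}, for sufficiently small $\varphi>0$ the operator $(\Hc_\Gamma^\varphi)^*S:Z\rightarrow Z^*$ is Lipschitz continuous and uniformly monotone, and $\Hc_\Gamma^\varphi:Z\rightarrow Z$ is a linear isomorphism by~\cref{lemma:hilbert}. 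By~\cref{lemma:P}, each $P_\ell:Z\rightarrow Z^*$ is linear, bounded, symmetric, and coercive. Thus all hypotheses of~\cref{thm:zarantello} are met, and the theorem yields that for sufficiently small $s>0$ and any $\eta^0\in Z$ the iteration~\cref{eq:abstractmethod} — which in this case is exactly~\cref{method:1} respectively~\cref{method:2} — is well defined and converges linearly in $Z$ to the unique solution $\eta$ of $S\eta=0$, i.e.\ to the solution of~\cref{eq:speq}.

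Next I would transfer the linear convergence from the interface to the subdomain iterates. By~\cref{lemma:tpspeq}, the solution $\eta$ of~\cref{eq:speq} satisfies $(\restr{u}{\Omega_i\times\R})=F_i\eta$, where $u$ is the unique solution of~\cref{eq:weak} (existence and uniqueness of $u$ coming from~\cref{lemma:ai}, and the link to the transmission problem from~\cref{lemma:tranequiv}). Since~\cref{lemma:Fi_lip} establishes that $F_i:Z\rightarrow W_i$ is Lipschitz continuous, there is a constant $C>0$ with
\begin{displaymath}
\|F_i\eta^n-\restr{u}{\Omega_i\times\R}\|_{W_i}=\|F_i\eta^n-F_i\eta\|_{W_i}\leq C\|\eta^n-\eta\|_Z.
\end{displaymath}
Combining this with the linear convergence $\|\eta^n-\eta\|_Z\leq CL^n\|\eta^0-\eta\|_Z$ from the first step gives $\|F_i\eta^n-\restr{u}{\Omega_i\times\R}\|_{W_i}\leq CL^n\|\eta^0-\eta\|_Z$, which is precisely the claimed linear convergence of $\{F_i\eta^n\}$ in $W_i$.

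I do not anticipate a genuine obstacle here, since every nontrivial analytic fact has already been isolated into the preceding lemmas; the proof is essentially an assembly step. The one point requiring care — and the closest thing to a ``hard part'' — is confirming that the concrete iterations~\cref{method:1,method:2} coincide with the abstract scheme~\cref{eq:abstractmethod} under the stated identification, so that the smallness of $\varphi$ (needed in~\cref{lemma:sp} to obtain monotonicity of $(\Hc_\Gamma^\varphi)^*S$) and the smallness of $s$ (needed in~\cref{thm:zarantello} to obtain the contraction) can be chosen compatibly and simultaneously; one first fixes $\varphi$ small enough for~\cref{lemma:sp}, and then, with that $\varphi$ fixed, chooses $s$ small enough to make the contraction constant $L=1+Cs^2-cs<1$. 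With the operators $P_\ell$ verified to be admissible by~\cref{lemma:P}, no further regularity on $\Gamma$, on $u$, or on the iterates themselves is invoked, which is exactly the minimal-regularity feature advertised in the introduction.
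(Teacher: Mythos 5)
Your proposal is correct and matches the paper's own proof essentially verbatim: both invoke \cref{thm:zarantello} together with \cref{lemma:sp,lemma:P} to get linear convergence of $\{\eta^n\}$ in $Z$, and then use the Lipschitz continuity of $F_i$ from \cref{lemma:Fi_lip} combined with the identification $(\restr{u}{\Omega_i\times\R})=F_i\eta$ from \cref{lemma:tpspeq,lemma:tranequiv} to transfer this to $\{F_i\eta^n\}$ in $W_i$. The only difference is that you make explicit the (correct) order of fixing $\varphi$ first and then $s$, which the paper leaves implicit.
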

\begin{proof}
The statement regarding the convergence of $\{\eta^n\}_{n\geq1}$ follows directly by~\cref{thm:zarantello} together with~\cref{lemma:sp,lemma:P}. The convergence of $\{F_i\eta^n\}_{n\geq1}$ follows from~\cref{lemma:Fi_lip}, as
\begin{displaymath}
        \|F_i\eta^n-F_i\eta\|_{W_i}\leq C\|\eta^n-\eta\|_Z\leq CL^n\|\eta^0-\eta\|_Z.
\end{displaymath}
Note that by~\cref{lemma:tpspeq,lemma:tranequiv}, one has the identity $(\restr{u}{ \Omega_1\times\R}, \restr{u}{ \Omega_2\times\R})=(F_1\eta, F_2\eta)$.
\end{proof}

\section{Convergence of the Robin--Robin method}\label{sec:conv}

Another way to construct a domain decomposition method is as follows. Instead of alternating between the Dirichlet and Neumann transmission conditions in~\cref{eq:TPStrong}, as done for the Dirichlet--Neumann and the Neumann--Neumann methods~\cite{quarteroni}, one can reformulate the transmission conditions into the Robin conditions 
\begin{displaymath}
\alpha(\nabla u_1)\cdot\nu_i+su_1=\alpha(\nabla u_2)\cdot\nu_i+su_2\quad\text{on }\Gamma\times\R\text{ for }i=1,2,
\end{displaymath}
where $s$ is a positive parameter. Alternating between the subdomains $\Omega_i\times\R$, $i=1,2$, then leads to the \emph{Robin--Robin} method, which has the interface formulation
\begin{equation}\label{eq:pr}
		 \eta^{n+1}=(sJ+S_2)^{-1}(sJ-S_1)(sJ+S_1)^{-1}(sJ-S_2)\eta^n,
\end{equation}
where $J:\eta\mapsto(\eta, \cdot)_{L^2(\Gamma\times\R)}$ is the Riesz isomorphism on $L^2(\Gamma\times\R)$. The derivation of~\cref{eq:pr} is identical to the one used for the setting of nonlinear elliptic equations given in~\cite[Section~6]{EHEE22}. It should also be noted that this reformulation of the Robin--Robin method, first proposed in~\cite{Agoshkov83}, is still non-standard in the literature. As the operators $(\Hc_\Gamma^\varphi)^*(sJ+S_i)$ are Lipschitz continuous and uniformly monotone, which follows by the same argument as in~\cref{lemma:sp}, they are also bijective by~\cref{thm:zarantello}. Thus, the interface formulation~\cref{eq:pr} is well defined on~$Z$.

For quasilinear parabolic equations with iterates fulfilling $F_i\eta^n\in H^1(\R)\tensor L^2(\Omega_i)$, convergence has been derived in~\cite{gander23}. In general, this additional regularity of the iterates~$\eta^n$ in turn requires higher regularity of the (method defined) subdomains $\Omega_i$, $i=1, 2$. However, the latter is not necessarily obtained even for simple domain decompositions. For example, consider~\cref{fig:spacetimecylinder}, where a trivial decomposition of the smooth convex domain~$\Omega$ generates two non-convex subdomains $\Omega_i$ with corners. 

Hence, we aim to prove convergence without assuming additional regularity on~$(\eta^n,\Omega_i)$. This is straightforward, as we have already derived the fundamental properties in~\cref{sec:SP} for the nonlinear time-dependent operators~$S_i,S$. With these abstract results in place, the convergence analysis for the Robin--Robin method applied to quasilinear parabolic equations follows by the same abstract arguments as for the method applied to nonlinear elliptic equations~\cite[Section~8]{EHEE22}. We therefore proceed with a short summary of the main ideas of the abstract convergence proof. 

The formulation~\cref{eq:pr} of the Robin--Robin method with operators mapping $Z$ into $Z^*$ is slightly to general for a convergence proof, and we instead interpret the Steklov--Poincaré operators as unbounded operators on $L^2(\Gamma\times\R)$. To this end, consider the Gelfand triple
\begin{displaymath}
Z\hookrightarrow L^2(\Gamma\times\R)\cong L^2(\Gamma\times\R)^*\hookrightarrow Z^*,
\end{displaymath}
which is well defined by~\cref{lemma:density}. Next, define the restricted operator domains 
\begin{displaymath}
 D(\mathcal{S}_i)=\{\eta\in Z: S_i\eta\in L^2(\Gamma\times\R)^*\},\quad D(\mathcal{S})=\{\eta\in Z: S\eta\in L^2(\Gamma\times\R)^*\},
\end{displaymath}
together with the unbounded nonlinear operators
\begin{align*}
	\mathcal{S}_i&:D(\mathcal{S}_i)\subseteq L^2(\Gamma\times\R)\rightarrow L^2(\Gamma\times\R): \eta\mapsto J^{-1}S_i\eta,\\
	\mathcal{S}&:D(\mathcal{S})\subseteq L^2(\Gamma\times\R)\rightarrow L^2(\Gamma\times\R): \eta\mapsto J^{-1}S\eta.
\end{align*}
The $L^2$-Steklov--Poincaré equation then becomes to find $\eta\in D(\mathcal{S})$ such that 
\begin{equation}\label{eq:spl2}
	  \mathcal{S}\eta=0\quad\text{in }L^2(\Gamma\times\R),
\end{equation}
and the numerical method takes the following form.\\

\noindent{\bf Robin--Robin method}\quad\emph{The solution of~\cref{eq:spl2} is approximated by the iteration
\begin{equation}\label{eq:prl2}
\eta^{n+1}=(sI+\mathcal{S}_2)^{-1}(sI-\mathcal{S}_1)(sI+\mathcal{S}_1)^{-1}(sI-\mathcal{S}_2)\eta^n,
\end{equation}
where $\eta^0\in D(\mathcal{S}_2)$ is a given initial guess and $s>0$ is a method parameter.}\\

As already observed, the operators $S$ and $sJ+S_i$ are all bijective, which implies that the same holds for the operators $\mathcal{S}$ and $sI+\mathcal{S}$. Hence, there exist a unique solution to~\cref{eq:spl2} and the iteration~\cref{eq:prl2} is well defined. For the convergence analysis, we also require the following mild regularity of the solution to the weak parabolic equation~\cref{eq:weak}. 
\begin{assumption}\label{ass:regularity}
The functionals  
\begin{displaymath}
\mu\mapsto a_i(\restr{u}{\Omega_i\times\R}, R_i\mu)-(f_i, R_i\mu)_{L^2(\Omega_i\times\R)},\quad{i=1,2},
\end{displaymath}
are elements in $L^2(\Gamma\times\R)^*$, where $u\in W$ is the solution to~\cref{eq:weak}.
\end{assumption}
\begin{remark}
\emph{The assumption is somewhat implicit, but can be interpreted as the solution having a generalized normal derivative
\begin{displaymath}
    \alpha(u, \nabla u)\cdot \nu_i
\end{displaymath}
on the space-time interface belonging to $L^2(\Gamma\times\R)$. In the case of the linear heat equation, i.e., $(\alpha,\beta)=(\nabla u,0)$, this holds if the solution satisfies the additional regularity 
\begin{displaymath}
u\in W\cap L^2\bigl(\R, H^{3/2+\varepsilon}(\Omega)\bigr),\,\varepsilon>0,\quad\text{or}\quad u\in W\cap L^2\bigl(\R,  C^1(\bar{\Omega})\bigr). 
\end{displaymath}
To see this, first observe that a Lipschitz manifold $\partial\Omega_i$ has a normal vector $\nu_i$ in $L^\infty(\partial\Omega_i)^d$. The additional regularity of~$u$ then yields that each term $\partial_j u\,(\nu_i)_j$ of the normal derivative becomes an element in $L^2(\Gamma\times\R)$}.
\end{remark}
Under this assumption one inherits the following additional regularity for the solution to the $L^2$-Steklov--Poincaré equation. 
\begin{lemma}\label{lemma:friedrich}
Let \cref{ass:domains,ass:eq,ass:f,ass:regularity} be valid. If $\eta$ is the solution to~\cref{eq:spl2} then $\eta\in D(\mathcal{S}_1)\cap D(\mathcal{S}_2)$.
\end{lemma}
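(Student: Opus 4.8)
The plan is to reduce the claim to the single observation that, for each $i=1,2$, the Steklov--Poincar\'e image $S_i\eta$ is represented by an element of $L^2(\Gamma\times\R)$. Since $\eta$ solves~\cref{eq:spl2} it already lies in $D(\mathcal{S})\subseteq Z$, so I only need to verify the membership $S_i\eta\in L^2(\Gamma\times\R)^*$; as this is asserted for both indices in~\cref{ass:regularity}, the content of the argument is entirely in matching the functional $\langle S_i\eta,\cdot\rangle$ to the functional appearing in that assumption. In particular I do \emph{not} need to invoke the splitting $S=S_1+S_2$: the assumption handles both indices directly and the argument is symmetric in $i$.

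First I would unwind the definitions in order to identify $F_i\eta$ with the restriction of the global solution. Since $J$ is an isomorphism, the equation $\mathcal{S}\eta=0$ in $L^2(\Gamma\times\R)$ is equivalent to $S\eta=0$ in $Z^*$, so $\eta$ solves the weak Steklov--Poincar\'e equation~\cref{eq:speq}. By~\cref{lemma:tpspeq} the pair $(F_1\eta,F_2\eta)$ then solves the weak transmission problem~\cref{eq:weaktran}, and by~\cref{lemma:tranequiv} the glued function $u=\{F_i\eta\text{ on }\Omega_i\times\R,\,i=1,2\}$ solves the weak equation~\cref{eq:weak}. Invoking the uniqueness part of~\cref{lemma:ai}, this $u$ is \emph{the} solution of~\cref{eq:weak}, and hence $F_i\eta=\restr{u}{\Omega_i\times\R}$ for $i=1,2$.

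With this identification the conclusion is immediate. By the definition of the Steklov--Poincar\'e operator,
\begin{displaymath}
\langle S_i\eta,\mu\rangle=a_i(F_i\eta,R_i\mu)-(f_i,R_i\mu)_{L^2(\Omega_i\times\R)}=a_i(\restr{u}{\Omega_i\times\R},R_i\mu)-(f_i,R_i\mu)_{L^2(\Omega_i\times\R)}
\end{displaymath}
for every $\mu\in Z$, which is precisely the functional that~\cref{ass:regularity} declares to be an element of $L^2(\Gamma\times\R)^*$. Since $Z$ is dense in $L^2(\Gamma\times\R)$ by~\cref{lemma:density}, this representation is unique, so $S_i\eta\in L^2(\Gamma\times\R)^*$ for both $i=1,2$, which is exactly $\eta\in D(\mathcal{S}_1)\cap D(\mathcal{S}_2)$.

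The proof carries no genuine analytic obstacle: all the regularity content is packed into~\cref{ass:regularity}, and the remaining work is purely the bookkeeping of the equivalences established in~\cref{sec:weak,sec:SP}. The one point that requires care is the identification $F_i\eta=\restr{u}{\Omega_i\times\R}$, since it chains~\cref{lemma:tpspeq,lemma:tranequiv} together with the uniqueness of the global solution; without uniqueness one could only conclude that $F_i\eta$ is \emph{a} restricted solution rather than the specific $u$ featured in~\cref{ass:regularity}, and the match of the two functionals would fail.
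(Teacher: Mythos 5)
Your proposal is correct and follows essentially the same route the paper intends (and defers to the elliptic analogue in \cite[Section~8]{EHEE22}): use the equivalences of \cref{lemma:tpspeq,lemma:tranequiv} plus uniqueness from \cref{lemma:ai} to identify $F_i\eta=\restr{u}{\Omega_i\times\R}$, so that $\langle S_i\eta,\cdot\rangle$ is exactly the functional declared to lie in $L^2(\Gamma\times\R)^*$ by \cref{ass:regularity} for both $i=1,2$. Your observation that the splitting $S=S_1+S_2$ is not needed (since the assumption covers both indices) is also consistent with how the paper states \cref{ass:regularity}.
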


We can now prove that the Robin--Robin method converges. The following theorem employs the uniform monotonicity of $S_i$ in~$L^2(\R)\tensor \Lambda$ and the fact that the solution to~\cref{eq:spl2} satisfies $\eta\in D(\mathcal{S}_1)\cap D(\mathcal{S}_2)$, which holds by~\cref{lemma:sp,lemma:friedrich}. The convergence then follows by the abstract result~\cite[Proposition 1]{lionsmercier}. A simpler proof of this abstract result can be found in~\cite[Lemma~8.8]{EHEE22}.
\begin{theorem}\label{thm:rrconv}
Let \cref{ass:domains,ass:eq,ass:f,ass:regularity} be valid. For any parameters~$s>0$ and initial guess~$\eta^0\in D(\mathcal{S}_2)$, the iterates $\{\eta^n\}_{n\geq1}$ of the Robin--Robin method~\cref{eq:prl2} are well defined and converges in $L^2(\R)\tensor\Lambda$ to $\eta$, the solution of the $L^2$-Steklov--Poincaré equation~\cref{eq:spl2}. Furthermore, the iterates $\{F_i\eta^n\}_{n\geq1}$ converge in $L^2(\R)\tensor H^1(\Omega_i)$ to $\restr{u}{ \Omega_i\times\R}$, where $u$ is the solution of the weak equation~\cref{eq:weak}.
\end{theorem}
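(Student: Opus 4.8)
The plan is to recognize the iteration~\cref{eq:prl2} as a Peaceman--Rachford operator splitting for the sum $\mathcal{S}=\mathcal{S}_1+\mathcal{S}_2$ on the Hilbert space $H=L^2(\Gamma\times\R)$, and then to verify that the hypotheses of the abstract splitting result \cite[Proposition 1]{lionsmercier} (see also \cite[Lemma~8.8]{EHEE22}) are satisfied. All the structural ingredients have in fact already been assembled for the forms and for the Steklov--Poincar\'e operators; the work consists in transporting them from the $Z$/$Z^*$ duality to the unbounded operators $\mathcal{S}_i=J^{-1}S_i$ on $H$, and afterwards upgrading the resulting convergence to the subdomain solutions $F_i\eta^n$.

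First I would check that $\mathcal{S}_1$ and $\mathcal{S}_2$ are maximal monotone on $H$. Monotonicity is inherited from~\cref{lemma:sp}: for $\eta,\mu\in D(\mathcal{S}_i)$ the dual pairing $\langle S_i\eta-S_i\mu,\eta-\mu\rangle$ coincides with the $H$-inner product $(\mathcal{S}_i\eta-\mathcal{S}_i\mu,\eta-\mu)_{H}$, and the uniform monotonicity in $L^2(\R)\tensor\Lambda$ gives
\begin{displaymath}
(\mathcal{S}_i\eta-\mathcal{S}_i\mu,\eta-\mu)_{H}\geq c\|\eta-\mu\|_{L^2(\R)\tensor\Lambda}^2\geq c\|\eta-\mu\|_{H}^2,
\end{displaymath}
since the $\Lambda$-norm dominates the $L^2(\Gamma)$-norm. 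Maximality then follows from Minty's theorem once the range condition $R(sI+\mathcal{S}_i)=H$ is verified; this surjectivity was already noted for the operators $sI+\mathcal{S}_i$ and descends from the bijectivity of $sJ+S_i:Z\to Z^*$ (a consequence of~\cref{thm:zarantello}) through the Gelfand triple $Z\hookrightarrow H\hookrightarrow Z^*$. Together with~\cref{lemma:friedrich}, which guarantees $\eta\in D(\mathcal{S}_1)\cap D(\mathcal{S}_2)$ for the solution of~\cref{eq:spl2}, these are exactly the data required by the abstract theorem, so the iterates are well defined and converge to the unique solution $\eta$.

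The strong monotonicity of each $\mathcal{S}_i$ measured in the $L^2(\R)\tensor\Lambda$-norm is what I expect to promote the convergence from the ambient $H$-norm to the claimed convergence in $L^2(\R)\tensor\Lambda$: the Fej\'er/telescoping estimate underlying the splitting proof renders the monotonicity defect summable, whence $\|\eta^n-\eta\|_{L^2(\R)\tensor\Lambda}\to 0$, and the same estimate keeps the image sequences $\{\mathcal{S}_i\eta^n\}$ bounded in $H$. I would also record here that the two fractional-step interface iterates produced by the scheme (the input $\eta^n\in D(\mathcal{S}_2)$ of the $\Omega_2$-solve and the intermediate resolvent iterate $(sI+\mathcal{S}_1)^{-1}(sI-\mathcal{S}_2)\eta^n\in D(\mathcal{S}_1)$ feeding the $\Omega_1$-solve) both converge to $\eta$ and both lie in the relevant operator domain.

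Finally I would transfer the convergence to the subdomain iterates. Reusing the monotonicity computation in the proof of~\cref{lemma:sp}, applied to whichever domain-respecting iterate feeds the $\Omega_i$-solve (denote it $\zeta^n$, so $\zeta^n\in D(\mathcal{S}_i)$ and $\zeta^n\to\eta$), gives
\begin{displaymath}
c\|F_i\zeta^n-F_i\eta\|_{L^2(\R)\tensor H^1(\Omega_i)}^2\leq(\mathcal{S}_i\zeta^n-\mathcal{S}_i\eta,\zeta^n-\eta)_{H}\leq\|\mathcal{S}_i\zeta^n-\mathcal{S}_i\eta\|_{H}\,\|\zeta^n-\eta\|_{H},
\end{displaymath}
and the right-hand side tends to zero because $\{\mathcal{S}_i\zeta^n\}$ is bounded while $\zeta^n\to\eta$ in $H$ (a consequence of convergence in the stronger $L^2(\R)\tensor\Lambda$-norm). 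Hence $F_i\zeta^n\to F_i\eta$ in $L^2(\R)\tensor H^1(\Omega_i)$, and by~\cref{lemma:tpspeq,lemma:tranequiv} the limit equals $\restr{u}{\Omega_i\times\R}$. The main obstacle I anticipate is not a single computation but the bookkeeping across the three function-space levels at once: establishing maximal monotonicity with the correct unbounded-operator domains, arguing that the abstract scheme converges in the $\Lambda$-trace norm (and keeps the images bounded) rather than merely in the ambient $L^2(\Gamma\times\R)$, and handling the $\Omega_1$-solve through the intermediate iterate since the primary sequence $\eta^n$ need only lie in $D(\mathcal{S}_2)$.
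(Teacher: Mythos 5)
Your proposal follows essentially the same route as the paper: the paper's proof consists exactly of passing to the unbounded operators $\mathcal{S}_i$ on $L^2(\Gamma\times\R)$ via the Gelfand triple, invoking the uniform monotonicity in $L^2(\R)\tensor\Lambda$ from \cref{lemma:sp} and the domain property of \cref{lemma:friedrich}, and citing the abstract Peaceman--Rachford result \cite[Proposition 1]{lionsmercier} (or \cite[Lemma~8.8]{EHEE22}), whose hypotheses and internal mechanism (maximal monotonicity via the range condition inherited from \cref{thm:zarantello}, Fej\'er-type summability of the monotonicity defects yielding convergence in the stronger $L^2(\R)\tensor\Lambda$-norm) are precisely what you verify by hand. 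Your routing of the $\Omega_1$-solve through the intermediate resolvent iterate, rather than literally $F_1\eta^n$ (which need not lie in $D(\mathcal{S}_1)$), is the correct reading of the theorem's conclusion and matches how the cited elliptic argument in \cite[Section~8]{EHEE22} transfers the interface convergence to the subdomain solutions.
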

\begin{remark}
\emph{In contrast to the MDN methods~\cref{method:1,method:2}, the Robin--Robin method converges for \emph{any} choice of the method parameter~$s>0$. However, the Robin--Robin method is unlikely to be linearly convergent in the present continuous framework. This is indicated already in the elliptic case, by observing that the discrete method is linearly convergent with an error reduction constant of the form $L=1-\mathcal{O}(\sqrt{h})$; see~\cite{gander06}. That is, a constant~$L$ that deteriorates to one as the spatial discretization parameter $h$ tends to zero.}
\end{remark}

\section{Extension to a space-time finite element method}\label{sec:FEM}

We will now illustrate how our analysis can be applied when combining domain decompositions and space-time finite element discretizations. As a very first proof of concept, we employ the spectral tensor basis of~\cite{Dahlgren04}, which we detail here. See also~\cite{zank21} for a similar discretization based on a modified Hilbert transform on finite time intervals. 

For the spatial discretization we use piecewise linear basis functions $\{\phi_k\}^M_{k=1}$ given on a suitable triangular partition~$K_h$ of the spatial domain $\Omega$. Here,  $h>0$ denotes the largest diameter in the partition. We denote the spaces spanned by the spatial basis on $\Omega, \Omega_i$ by $V^h, V_i^h$, respectively. For the temporal discretization we use the following spectral basis. Let $N\in\N$, $\tau>0$, and consider the spectral domain $(-N\tau, N\tau)$ with the spectral grid points $\omega_j=j\tau$, $j=-N, \dots, N$. We define the first $N+1$ basis elements $\{\psi_j\}^N_{j=0}$, through their Fourier transforms~$\hat{\F}\psi_j$. They are the unique piecewise linear functions on $(-N\tau, N\tau)$ with respect to the grid above defined by
\begin{displaymath}
    \hat{\F}\psi_j(\omega_\ell)=
        \begin{cases}
             1 &\text{if } j=\ell \text{ or } j=-\ell, \\
             0 &\text{otherwise.}
        \end{cases}
\end{displaymath}
The second set of $N+1$ basis elements $\{\tilde{\psi}_j\}^N_{j=0}$ are defined as
\begin{displaymath}
    \tilde{\psi}_j=\hat{\Hc}\psi_j.
\end{displaymath}
The Fourier transforms $\hat{\F}\tilde{\psi}_j$ can be computed explicitly using~\cref{eq:hilbertfourier}, which yields that
\begin{displaymath}
    \hat{\F}\tilde{\psi}_j(\omega)=-\mathrm{i}\,\sgn(\omega)\hat{\F}\psi_j(\omega).
\end{displaymath}

Note that the first set of basis element is composed of even real functions and the second set of odd imaginary functions, which implies that the inverse Fourier transform is real-valued. In fact, the basis functions are
\begin{align*}
    \psi_0(t)&=\frac{1}{\pi t^2\tau}\bigl(1-\cos(t\tau)\bigr),\\
    \psi_j(t)&=\frac{2}{\pi t^2\tau}\bigl(1-\cos(t\tau)\bigr)\cos(tj\tau),\quad j=1,\dots, N,\\
    \tilde{\psi}_0(t)&=\frac{t\tau-\sin(t\tau)}{\pi t^2\tau},\\
    \tilde{\psi}_j(t)&=\frac{2}{\pi t^2\tau}\bigl(1-\cos(t\tau)\bigr)\sin(tj\tau),\quad j=1,\dots, N.
\end{align*}
We denote the space spanned by these basis elements by $U_N^\tau\subset H^{1/2}(\R)$. Since $\hat{\Hc}\hat{\Hc}=-I$, the space $U_N^\tau$ is invariant under the Hilbert transform. Moreover, since the basis functions are localized in Fourier space, the discretization leads to sparse matrices that can be easily assembled using Parseval's formula. In particular, the bilinear forms containing fractional derivatives are explicitly given as
\begin{align*}
    (\partial^{1/2}_+u,\partial^{1/2}_-v)&=\int_\R i\xi\hat{\F}u\,\overline{\hat{\F}v}\,\mathrm{d}\xi,\\
    (\partial^{1/2}_+u,\partial^{1/2}_+v)&=\int_\R |\xi|\hat{\F}u\,\overline{\hat{\F}v}\,\mathrm{d}\xi,\\
    (\partial^{1/4}u,\partial^{1/4}v)&=\int_\R \sqrt{|\xi|}\hat{\F}u\,\overline{\hat{\F}v}\,\mathrm{d}\xi
\end{align*}
for all $u,v\in U_N^\tau$.

We can then define the full tensor spaces~$W^h=U_N^\tau\otimes V^h\subset W$ and~$W_i^h=U_N^\tau\otimes V_i^h\subset W_i$. Note that for notational purposes we leave out the dependence on $(\tau, N)$ in all of our discrete spaces, operators, and functions, e.g., we write~$W^h$ instead of~$W_N^{h, \tau}$. For the sake of simplicity, we introduce the discrete trace space $Z^h$ via the assumption below.
\begin{assumption}\label{ass:disc}
The spatial partition $K_h$ of $\Omega$ is chosen such that $Z_h=T_1W_1^h=T_2W_2^h$.
\end{assumption}

The discrete weak equation and the discrete transmission problem can then be introduced by simply replacing the function spaces in~\cref{eq:weak} and~\cref{eq:weaktran}, respectively, by their discrete counterparts. The discrete time-dependent Steklov--Poincaré operators $S^h_i:Z^h\rightarrow (Z^h)^*$ are defined as
\begin{displaymath}
\langle S^h_i\eta, \mu\rangle=a_i(F_i^h\eta, R_i^h\mu)-(f_i, R_i^h\mu)_{L^2(\Omega_i\times\R)}.
\end{displaymath}
Here, $F_i^h:Z^h\rightarrow W_i^h$ is the discrete solution operator and $R_i^h:Z^h\rightarrow W_i^h$ is an arbitrary extension operator. With the operators~$S_i^h$ in place, it is straightforward to define the discrete variants of the domain decomposition methods~\cref{method:1,method:2}. The following convergence results hold for these discrete methods. 
\begin{theorem}\label{thm:mdnconvh}
Let \cref{ass:domains,ass:eq,ass:f,ass:disc} be valid. For sufficiently small positive parameters $(\varphi,s)$ and an initial guess $\eta_h^0\in Z^h$, the iterates $\{\eta_h^n\}_{n\geq1}$ of the discrete versions of the MDN methods~\cref{method:1,method:2} are well defined and converge linearly in $Z^h$ to $\eta_h$, the solution of the discrete Steklov--Poincaré equation. Moreover, the iterates $\{F^h_i\eta_h^n\}_{n\geq1}$ converges linearly in $W^h_i$ to $\restr{u_h}{ \Omega_i\times\R}$, where $u_h$ is the solution of the discrete weak equation. 
\end{theorem}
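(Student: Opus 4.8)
The plan is to show that the entire abstract machinery behind \cref{cor:mdnconv} transfers verbatim to the discrete setting, applying \cref{thm:zarantello} to the problem/method family $(X,G,Q,P)=(Z^h, S^h, \Hc_\Gamma^\varphi, P_\ell^h)$, where $P_\ell^h$, $\ell=1,2$, are the discrete analogues of the preconditioners of \cref{lemma:P}. Since the discretization is conforming, i.e.\ $W_i^h\subset W_i$ and, by \cref{ass:disc}, $Z^h=T_iW_i^h\subset Z$, most properties are inherited from their continuous counterparts. The essential structural ingredient is that, because $\hat{\Hc}\hat{\Hc}=-I$, the temporal space $U_N^\tau$ is invariant under the Hilbert transform. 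Consequently $\Hc_i=\hat{\Hc}\tensor I$ maps $W_i^h=U_N^\tau\otimes V_i^h$ onto itself, and the commutation $T_i\Hc_i=\Hc_\Gamma T_i$ from \cref{lemma:hilbert} shows that $\Hc_\Gamma$ maps $Z^h=T_iW_i^h$ onto $Z^h$. Hence $\Hc_i^\varphi$ and $\Hc_\Gamma^\varphi$ restrict to isomorphisms of $W_i^h$ and $Z^h$, and, since $\Hc_i^\varphi$ commutes with $T_i$ by \cref{eq:hilbertphi}, it also restricts to an isomorphism of the discrete interior space $W_i^{0,h}=\{v\in W_i^h:T_iv=0\}$.

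First I would establish the discrete analogues of \cref{lemma:Fi,lemma:Fi_lip}. The forms $a_i$ are Lipschitz continuous on $W_i^h\times W_i^h$ and the forms $a_i(\cdot,\Hc_i^\varphi\cdot)$ are uniformly monotone on $W_i^h$, both inherited by restriction from \cref{lemma:ai} together with the invariance noted above; the equivalent norms and the extended Poincaré inequality likewise survive restriction to a subspace. Applying \cite[Theorem 25.B]{zeidler2b} on $W_i^{0,h}$ to the shifted form $b_i(u,v)=a_i(u+R_i^h\eta,\Hc_i^\varphi v)$ then yields a unique discrete solution operator $F_i^h:Z^h\rightarrow W_i^h$ with $T_iF_i^h\eta=\eta$, exactly as in the proof of \cref{lemma:Fi}. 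Repeating the computation of \cref{lemma:Fi_lip} with $R_i$ replaced by a bounded right inverse $R_i^h:Z^h\rightarrow W_i^h$ of $T_i$ then shows that $F_i^h$ is Lipschitz continuous; such an $R_i^h$ exists because $T_i:W_i^h\rightarrow Z^h$ is, by definition of $Z^h$, a surjection between finite-dimensional spaces.

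Next I would transfer \cref{lemma:sp,lemma:P}. Since $F_i^h$ enjoys the same Lipschitz bound and left-inverse property as $F_i$, the argument of \cref{lemma:sp} carries over word for word to show that $S_i^h$ and $S^h=S_1^h+S_2^h$ are Lipschitz continuous and uniformly monotone in $L^2(\R)\tensor\Lambda$, and that $(\Hc_\Gamma^\varphi)^*S^h$ is uniformly monotone on $Z^h$ for $\varphi>0$ small enough; the only inputs are the discrete versions of \cref{eq:Fi,eq:hilbertphi}, which hold by construction. The discrete preconditioners $P_\ell^h$ remain linear, bounded, symmetric, and coercive by the argument of \cref{lemma:P}, since the equivalent-norm estimates there only require membership in $Z$ and a bounded right inverse of $T_2$ on $Z^h$, furnished by $R_2^h$ and by a discrete elliptic extension $H_2^h$ into $V_2^h$, respectively.

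With these discrete properties in place, \cref{thm:zarantello} applied to $(Z^h, S^h, \Hc_\Gamma^\varphi, P_\ell^h)$ yields that $S^h$ is bijective and that the discrete MDN iterates are well defined and converge linearly in $Z^h$ to the solution $\eta_h$ of the discrete Steklov--Poincaré equation. The linear convergence of $\{F_i^h\eta_h^n\}$ in $W_i^h$ then follows from the discrete bound $\|F_i^h\eta_h^n-F_i^h\eta_h\|_{W_i}\leq C\|\eta_h^n-\eta_h\|_Z$, exactly as in the proof of \cref{cor:mdnconv}. The one point to get right — rather than a genuine obstacle — is the compatibility of the Hilbert transform with the tensor-product discretization: it is the invariance of $U_N^\tau$ under $\hat{\Hc}$ that keeps $\Hc_i^\varphi$ and $\Hc_\Gamma^\varphi$ acting within the discrete spaces and thereby preserves the discrete uniform monotonicity of $a_i(\cdot,\Hc_i^\varphi\cdot)$ and of $(\Hc_\Gamma^\varphi)^*S^h$. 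This does not follow from conformity alone and is the sole place where the specific choice of temporal basis enters the convergence proof.
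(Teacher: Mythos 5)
Your proposal is correct and takes essentially the same route as the paper: the paper's proof consists precisely of the remark that the continuous argument (applying \cref{thm:zarantello} with the discrete analogues of the operators in \cref{lemma:sp,lemma:P}) carries over unchanged, ``utilizing that the Hilbert transform $\hat{\Hc}:U_N^\tau\rightarrow U_N^\tau$ is an isomorphism.'' Your identification of the invariance of $U_N^\tau$ under $\hat{\Hc}$ as the one ingredient that does not follow from conformity alone is exactly the point the paper singles out, and the rest of your write-up is a faithful expansion of that sketch.
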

The proof is the same as for the continuous case, utilizing that the Hilbert transform~$\hat{\Hc}:U_N^\tau\rightarrow U_N^\tau$ is an isomorphism. Furthermore, the same type of discrete extensions can be done for~\cref{thm:rrconv} and thereby establishing the convergence of the discrete version of the Robin--Robin method~\cref{eq:prl2}. 

We conclude with a numerical experiment to illustrate the derived convergence results. For this purpose,
we use the spatial domain $\Omega=(0, 1)$, the decomposition $\Omega_1=(0, 1/2)$, $\Omega_2=(1/2, 1)$, $\Gamma=\{1/2\}$, and the linear heat equation, i.e., $(\alpha,\beta)=(\nabla u,0)$, with the source term
\begin{displaymath}
    f(t, x)=\begin{cases}
           \bigl(e^{-t}-\frac12 e^{-t/2}\bigr)(x^2-x^3)+(e^{-t}-e^{-t/2})(2-6x) &\text{for } t>0, \\
           0 & \text{for } t\leq 0.
    \end{cases}
\end{displaymath}
The exact solution is then 
\begin{displaymath}
    u(t, x)=\begin{cases}
    (e^{-t/2}-e^{-t})(x^2-x^3) &\text{for } t>0, \\
    0 &\text{for }  t\leq 0.
   \end{cases}
\end{displaymath}
We also fix an equidistant spatial partition $K_h$ with $h=1/512$, and a spectral grid with $(N,\tau)=(256,0.5)$. It is then straightforward to validate that~\cref{ass:domains,ass:eq,ass:f,ass:regularity,ass:disc} hold. 

Next, consider the discrete versions of the modified Dirichlet--Neumann methods given in~\cref{rem:B} and~\cref{method:2}, as well as the Robin--Robin method~\cref{eq:prl2}. These methods are hereafter referred to as MDN1, MDN2, and RR. As all three methods are invariant of the choice of the operator $R_i^h$ appearing in the terms $R_i^h\mu$, all such terms are implemented by taking $R_i^h$ to be the extension by zero on the interior degrees of freedom. To obtain an easily computable numerical error, observe that all methods converge in $L^2(\R)\tensor H^1(\Omega_i)$ and therefore also in, e.g., $L^2(0,1)\tensor H^1(\Omega_i)$. Hence, we compute a relative error by comparing with the exact solution $u$ on the finite time interval $(0,1)$, i.e., 
\begin{align*}
    e_e&=\frac{\|u-u_1^h\|_{L^2(0,1)\tensor H^1(\Omega_1)}+\|u-u_2^h\|_{L^2(0,1)\tensor H^1(\Omega_2)}}{\|u\|_{L^2(0,1)\tensor H^1(\Omega_1)}+\|u\|_{L^2(0,1)\tensor H^1(\Omega_2)}}.
\end{align*}
The parameters used are $(\varphi,s)=(0.02\pi,0.55)$, $(\varphi,s)=(0.02\pi,0.7)$, and $s=2.5$ for MDN1, MDN2, and RR, respectively. The results are given in~\cref{fig:exactresults}. 
From these results we see that all three methods initially display an error decay in line with~\cref{thm:mdnconvh}, and after $n=7$ iterations all the errors have reached a constant level. That is, after just a few iterations the domain decomposition errors have decreased to the size of the underlying space-time finite element error.

Further experiments evaluating the efficiency of these methods for nonlinear equations on Lipschitz domains, as well as developing strategies for choosing the method parameters, will be conducted elsewhere. 
\begin{figure}
\centering
\includegraphics[width=0.7\linewidth]{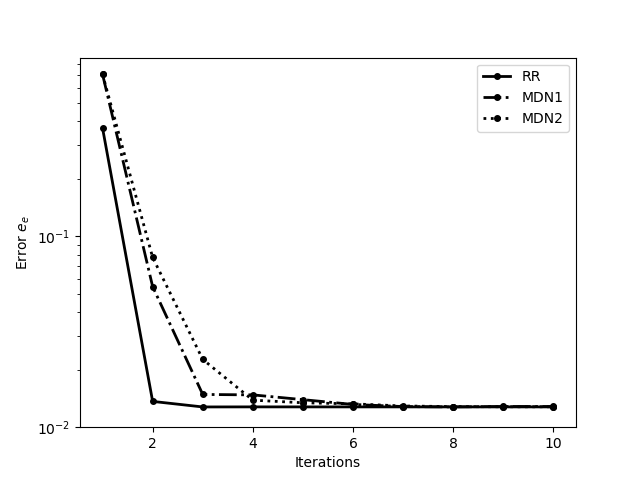}
\caption{The relative errors of the modified Dirichlet--Neumann iterations (MDN1, MDN2) and the Robin--Robin method (RR).}
\label{fig:exactresults}
\end{figure}%

\section*{Acknowledgements}
The authors thank Monika Eisenmann for her helpful input on Zarantello's theorem and for making them aware of the reference~\cite[Theorem 25.B]{zeidler2b}.

\section*{Funding}
This work was supported by the Swedish Research Council under the grant 2019--05396.

\bibliographystyle{plain}
\bibliography{references}

\end{document}